\DeclareMathOperator{\dist}{dist}
\DeclareMathOperator{\cop}{c}
\DeclareMathOperator{\attCop}{cc}
\newtheorem{theorem}{Theorem}[section]
\newtheorem{conjecture}[theorem]{Conjecture}
\newtheorem{problem}[theorem]{Problem}
\newtheorem{observation}[theorem]{Observation}
\newtheorem{lemma}[theorem]{Lemma}
\theoremstyle{definition}
\newtheorem{question}[theorem]{Question}
\def\epsilon{\varepsilon}
\title{Cops and Attacking Robbers with Cycle Constraints
}
\author{Alexander Clow}
\address{Department of Mathematics, Simon Fraser University}
\email{alexander\_clow@sfu.ca}
\author{Melissa A. Huggan}
\address{Department of Mathematics, Vancouver Island University}
\email{melissa.huggan@viu.ca}
\author{M.E. Messinger}
\address{Department of Mathematics and Computer Science, Mount Allison University}
\email{mmessinger@mta.ca}
\date{\today}
\begin{document}

\begin{abstract}
    This paper considers the Cops and Attacking Robbers game, a variant of Cops and Robbers, where the robber is empowered to attack a cop in the same way a cop can capture the robber. In a graph $G$, the number of cops required to capture a robber in the Cops and Attacking Robbers game is denoted by $\attCop(G)$. We characterise the triangle-free graphs $G$ with $\attCop(G) \leq 2$ via a natural generalisation of the cop-win characterisation by Nowakowski and Winkler \cite{nowakowski1983vertex}. We also prove that all bipartite planar graphs $G$ have $\attCop(G) \leq 4$ and show this is tight by constructing a bipartite planar graph $G$ with $\attCop(G) = 4$. Finally we construct $17$ non-isomorphic graphs $H$ of order $58$ with $\attCop(H) = 6$ and $\cop(H)=3$. This provides the first example of a graph $H$ with $\attCop(H) - \cop(H) \geq 3$ extending work by Bonato, Finbow, Gordinowicz, Haidar, Kinnersley, Mitsche, Pra\l{}at, and Stacho \cite{bonato2014robber}.  We conclude with a list of conjectures and open problems.
\end{abstract}
 \maketitle

\section{Introduction}

\emph{Cops and Robbers} is a two-player game played on a reflexive graph $G = (V,E)$. To begin the game, the cop player places $k$ cops onto vertices of the graph, then the robber player chooses a vertex to place the robber. Game play proceeds in rounds: in each round, the cop player has a turn and then the robber player has a turn.
During the cop player's turn, the cops each move to an adjacent vertex.  Similarly, on the robber player's turn, the robber moves to an adjacent vertex.  As the graph is reflexive, there is a loop at each vertex and a cop or robber may traverse a loop during their turn.  In this paper, we equate such a move with passing.  The cop player wins if there is a cop strategy in which after finitely many moves, a cop can move onto the vertex currently occupied by the robber, thereby capturing the robber.  The robber player wins if there exists a robber strategy by which the robber can evade capture indefinitely. Both players are assumed to play optimally. The least number of cops required for the cop player to win, regardless of the robber's strategy, is the \emph{cop number} of a graph, denoted $c(G)$ for a graph $G$. If $k$ cops are sufficient to capture the robber, then the cop player has a \emph{winning cop strategy} and $c(G) \leq k$. If $k$ cops are also necessary for capture, then $c(G) = k$.

Cops and Robbers was introduced by Nowakowski and Winkler \cite{nowakowski1983vertex}, and independently by Quilliot \cite{quilliot1983problemes}, with the cop number being later introduced by Aigner and Fromme \cite{Aigner1984}. Over the last 40 years Cops and Robbers has been extensively studied. In particular, the cop number of planar graphs \cite{Aigner1984} 
and graphs of large girth \cite{bradshaw2023cop,clow2023graphs,frankl1987cops} have been studied and provide motivation for our research directions. There are a number of variants of the Cops and Robbers game within the literature, some of which affect the power dynamics between the cop player and the robber player. We recommend \cite{bonato2011game} for a general reference for Cops and Robbers and \cite{bonato2022game} for a general pursuit-evasion game reference which includes more game variants. 

The focus of this paper is the variant known as \emph{Cops and Attacking Robbers}. Introduced by Bonato, Finbow, Gordinowicz, Haidar, Kinnersley, Mitsche, Pra\l{}at, and Stacho \cite{bonato2014robber}, Cops and Attacking Robbers is played in exactly the same way as Cops and Robbers, with the added mechanic that if the robber moves onto the vertex occupied by a cop $C$, then cop $C$ is removed from the game. This mechanic is called \emph{attacking}, and we say that such a robber \emph{attacks $C$}. Importantly, the robber can only attack one cop per turn, so if the robber moves to a vertex occupied by two or more cops, then only one cop is attacked and removed. We assume both players play optimally. The \emph{attacking cop number of $G$}, denoted $\attCop(G)$, is the least integer $k$ such that there is a cop strategy in the Cops and Attacking Robbers game, whereby $k$ cops can win on graph $G$, regardless the robber's strategy. An important caveat is that if the robber begins the game on the same vertex as a cop, this does not count as attacking the cop.  We suppose all graphs are finite, reflexive and connected, although we will not draw loops in figures. To gain some intuition we recommend readers verify the cop number and attacking cop number of the graphs in Figure~\ref{fig: Small Graph Examples}.

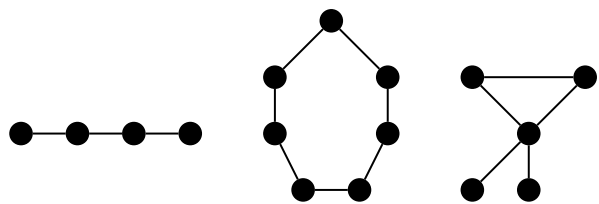
\begin{figure}[htb]
\begin{center}
    \scalebox{0.75}{
        \begin{tikzpicture}[node distance={15mm}, thick, main/.style = {draw, circle}] 
 
\node[main][fill= black] (0) at (0,1) {}; 
\node[main][fill= black] (1) at (1,1) {}; 
\node[main][fill= black] (2) at (2,1) {};
\node[main][fill= black] (3) at (3,1) {}; 

\draw [line width=1.pt] (0) -- (1) -- (2) -- (3);

\node[main][fill= black] (4) at (5,0) {}; 
\node[main][fill= black] (5) at (6,0) {}; 
\node[main][fill= black] (6) at (6.5,1) {};
\node[main][fill= black] (7) at (6.5,2) {}; 
\node[main][fill= black] (8) at (4.5,1) {}; 
\node[main][fill= black] (14) at (5.5,3) {}; 
\node[main][fill= black] (15) at (4.5,2) {}; 

\draw [line width=1.pt] (4) -- (5) -- (6) -- (7) -- (14) -- (15) -- (8) -- (4);

\node[main][fill= black] (9) at (8,0) {}; 
\node[main][fill= black] (10) at (9,0) {}; 
\node[main][fill= black] (11) at (8,2) {};
\node[main][fill= black] (12) at (10,2) {}; 
\node[main][fill= black] (13) at (9,1) {}; 

\draw [line width=1.pt] (12) -- (13) -- (11) -- (12);
\draw [line width=1.pt] (9) -- (13) -- (10);
        \end{tikzpicture}
    }
\end{center}
\caption{The graph $P_4$ (left), $C_7$ (middle), and $G$ (right). We note that $\cop(P_4)=1 = \cop(G)$ and $\cop(C_7)=2$, while $\attCop(G) = 1$, $\attCop(P_4)=2$, and $\attCop(C_7)=3$.}
\label{fig: Small Graph Examples}
\end{figure}

Notice that there are relationships between the attacking cop number and other graph parameters. First, we point out that the attacking cop number is bounded above by the domination number. If the cops begin on a dominating set, then they can capture the robber on their first turn.

\begin{observation}[\cite{bonato2014robber}]\label{Ob: Trivial upper bound}
    For all graphs $G$, we have that $\attCop(G) \leq \gamma(G)$.
\end{observation}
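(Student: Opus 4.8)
The plan is to exhibit an explicit winning strategy for $\gamma(G)$ cops, namely: start them on a minimum dominating set and win on the first cop move. First I would fix a dominating set $D \subseteq V(G)$ with $|D| = \gamma(G)$ and place one cop on each vertex of $D$ before the robber is placed. Whatever vertex $v$ the robber then chooses, the defining property of a dominating set guarantees a vertex $u \in D$ with $u = v$ or $uv \in E(G)$; since $G$ is reflexive, in either case the edge (or loop) $uv$ is available for the cop occupying $u$.

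The key point is a timing observation: in each round the cops move before the robber, so on the cops' very first turn the cop at $u$ can traverse $uv$ and land on the robber's vertex, capturing the robber. Because capture occurs during the cops' turn, the robber never gets to move, and in particular never gets to attack a cop; the attacking mechanic is therefore irrelevant to this strategy. The only edge case to address is the robber starting on a cop's vertex, but by the caveat in the rules this does not count as an attack, and the cop there still captures on the first move exactly as above. Hence $\gamma(G)$ cops win, giving $\attCop(G) \le \gamma(G)$.

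I do not anticipate any genuine obstacle here: the statement is immediate once one notes that the standard first-move domination argument for the ordinary cop number transfers verbatim, since the robber's additional power only manifests when the robber actually moves, and in this strategy the game ends before that ever happens.
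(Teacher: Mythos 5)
Your proposal is correct and is exactly the paper's argument: the paper justifies Observation~\ref{Ob: Trivial upper bound} by noting that cops placed on a dominating set capture the robber on their first turn, before the robber (and hence the attacking mechanic) ever comes into play. Your added remarks about reflexivity and the robber starting on a cop's vertex are fine but not substantively different.
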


Next, we observe that the attacking cop number is at most twice the cop number: we simply ``double-up'' cops and follow a winning cop strategy from the classical game of Cops and Robbers.  Suppose cops $C_1$ and $C_2$ always occupy the same vertex as each other.  If the robber attacks $C_1$, then $C_2$ will capture the robber on the next cop turn.  In this case, $C_2$ acts as a ``backup" cop to $C_1$. More generally, a \emph{backup cop} is a cop, $C_j$, who stays within distance one of another cop, $C_i$. If $C_i$ is attacked during round $t$, then as $C_j$ is occupying a vertex in the closed neighbourhood of $C_i$, they capture the robber during round $t+1$. An immediate lower bound is the cop number of the graph: giving additional power to the robber only makes the situation worse for the cops. 

\begin{observation}[\cite{bonato2014robber}]\label{Ob: Trivial Cop bound}
    For all graphs $G$, we have that $\cop(G) \leq \attCop(G) \leq 2\cop(G)$.
\end{observation}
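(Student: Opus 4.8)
The plan is to establish the two inequalities separately. For the lower bound $\cop(G) \le \attCop(G)$, the idea is that the robber in Cops and Attacking Robbers has at least as many options as in Cops and Robbers, so a cop team that defeats the attacking robber also defeats the ordinary robber. To make this precise I would fix a winning cop strategy $\sigma$ for the attacking game with $k = \attCop(G)$ cops and derive a winning cop strategy for the ordinary game with $k$ cops as follows: the cops maintain (in their heads) a parallel play of the attacking game, initialised to agree with the real game; on each cop turn they consult $\sigma$ and make the prescribed move, with any cop that has been ``attacked'' in the imagined play simply staying put in the real game; on each robber turn they copy the real robber's move into the imagined play, triggering an imagined attack whenever that move lands on an imagined cop. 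Since $\sigma$ wins against every attacking robber, after finitely many rounds some imagined cop occupies the imagined robber's vertex; as the real and imagined robbers move identically and the capturing cop---never having been attacked---moved identically in both games, a real cop then occupies the real robber's vertex, giving capture.

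For the upper bound $\attCop(G) \le 2\cop(G)$, I would take a winning cop strategy $\sigma$ for the ordinary game with $k = \cop(G)$ cops $C_1, \dots, C_k$ and add backup cops $C_1', \dots, C_k'$, with $C_i'$ permanently shadowing $C_i$ (always occupying the same vertex). The $2k$ cops play $\sigma$, each pair $\{C_i, C_i'\}$ acting as a single virtual cop. If the robber never attacks, the play is indistinguishable from a play of the ordinary game, so $\sigma$ captures. If the robber attacks in some round $t$, then on its turn it must move onto the vertex currently held by some pair $\{C_i, C_i'\}$ and remove one of them, say $C_i$; but $C_i'$ still sits on the robber's vertex, so on the cop turn of round $t+1$ the cop $C_i'$ traverses the loop (i.e.\ passes) and thereby captures the robber. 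In either case the cops win with $2\cop(G)$ cops.

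I do not expect a genuine obstacle; the statement is elementary and already sketched in the surrounding text. The only points needing care are the timing conventions---attacking occurs on the robber's turn while capture occurs on the cop's turn, which is exactly why one backup move always suffices after an attack---and the bookkeeping in the lower-bound simulation, namely verifying that every move of the imagined attacking game is legal and that the cop which captures there never diverged from its real counterpart. The caveat that a robber beginning on a cop's vertex does not count as attacking is harmless, since in both arguments the attack mechanic is invoked only after play has begun.
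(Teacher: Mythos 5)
Your proposal is correct and follows essentially the same route the paper sketches: the lower bound is the monotonicity argument (a strategy beating the attacking robber beats the ordinary robber, which you make precise with a simulation), and the upper bound is exactly the paper's ``double-up'' construction in which each cop has a co-located backup that captures on the cop turn immediately following an attack. The extra care you take with timing and with the bookkeeping of attacked cops in the imagined play is sound and fills in details the paper leaves informal.
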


See~\cite{akhtar2024cops,bonato2014robber,das2021variations} for examples of graphs $G$ for which $cc(G) \leq c(G)+1$.  However, this inequality is not true in general.  The line graph of the Peterson graph has cop number $2$ and attacking cop number $4$~\cite{bonato2014robber}. Bonato et al.~\cite{bonato2014robber} only provided this one example where the attacking cop number and the cop number differ by more than one. Hence, there exist graphs $G$ for which $\attCop(G) - \cop(G) > 1$, and their example gives a difference of two. It remains unknown if for every integer $k \geq 3$ there exists a graph $G$, such that $\attCop(G)-\cop(G) \geq k$. We believe it to be true. In support of this conjecture, we construct  graphs $H$ for which $\attCop(H) - \cop(H) \geq 3$ in Section~\ref{Section H}. In fact, we construct $17$ graphs $H$ where $\attCop(H) = 2\cop(H) = 6$. A full list of these graphs $H$ can be found on GitHub  \cite{Clow2024Git}. We note that these $17$ graphs, $H_1,\dots, H_{17}$ are chosen from a family of $18$ candidate graphs, and we were unable to determine the value of $\attCop(H_{18})-\cop(H_{18})$, where $H_{18}$ is the final candidate graph.

Throughout the rest of the paper, we introduce new definitions as required. The paper is structured as follows. In Section~\ref{sec:tri-free_leq2}, we characterise the triangle-free graphs with attacking cop number $2$. In Section~\ref{sec:planar}, we consider planar graphs. It was shown in \cite{bonato2014robber} that outerplanar graphs have attacking cop number at most $3$, so we begin by characterising outerplanar graphs with attacking cop number $2$. Next, we provide a bipartite planar graph with attacking cop number $4$, before proving that all bipartite planar graphs have attacking cop number at most $4$. Our proof is similar to the proof that three cops can capture a robber on a planar graph in the classical game of Cops and Robbers, first proven in \cite{Aigner1984}. With respect to general planar graphs, determining an upper bound for the attacking cop number is more challenging than the cop number because we face the added complexity that shortest paths need not be $1$-guardable in Cops and Attacking Robbers \cite{bonato2014robber}, unlike in classical Cops and Robbers.
Section~\ref{Section H} is devoted to constructing graphs $H$ with $\attCop(H)-\cop(H) \geq 3$. In fact, we prove the stronger result that there exist graphs $H$ with $\cop(H) = 3$, such that $\attCop(H) = 2\cop(H) = 6$. These proofs use a combination of computer assistance and constructions which leverage the existence of certain regular graphs of large girth.  We conclude with several conjectures and open questions for future work.

\section{Triangle-Free Graphs with $\attCop(G) \leq 2$.}\label{sec:tri-free_leq2}

The goal of this section is to characterise the triangle-free graphs with attacking cop number $2$. We give our characterisation in Theorem~\ref{Thm: cc(G)=2}. Before getting to this result, we need a few definitions and lemmas. 

Let $G$ be a graph on $n$ vertices.  In~\cite{Dalhaus} Dahlhaus et al.~define vertex $v$ to {\it dominate} vertex $u$ whenever $N(u) \subseteq N[v]$ holds;   
and they further define an ordering $v_1,v_2,\dots,v_n$ of the vertices of $G$ to be a {\it domination elimination ordering} if for every $1\leq i < n$, there exists $j_i > i$ such that $v_{j_i}$ dominates $v_i$ in $G- S_{i-1}$, where $S_{i-1} = (v_1,\dots, v_{i-1})$ for $i > 1$, and $S_0 = \emptyset$. For $k \in \{1,\dots,n\}$, we define an ordering $S_k = (v_1,v_2,\dots,v_k)$ of a subset of vertices of $G$ to be a {\it k-partial domination elimination ordering} of $G$ if for every $1\leq i <k$, there exists $j_i>i$ such that $v_{j_i}$ dominates $v_i$ in $G - S_{i-1}$, where  $S_{i-1} = (v_1,\dots, v_{i-1})$ for $i > 1$, and $S_0 = \emptyset$.  Observe that an $n$-partial domination elimination ordering coincides with the definition of a domination elimination ordering.

We are now prepared to work toward proving a characterisation of triangle-free graphs with attacking cop number $2$. We begin by restating the characterisation of graphs with attacking cop number $1$.

\begin{observation}[\cite{bonato2014robber}]\label{Ob: cc(G)=1}
A graph $G$ has $\attCop(G)=1$ if and only if $\gamma(G)=1$.
\end{observation}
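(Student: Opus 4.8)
The plan is to establish the equivalence by handling each implication separately; the forward direction is immediate from the bounds already recorded, while the reverse direction calls for an explicit (and very simple) robber strategy that weaponises the attacking rule.

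For $\gamma(G) = 1 \implies \attCop(G) = 1$: I would note that Observation~\ref{Ob: Trivial upper bound} already gives $\attCop(G) \le \gamma(G) = 1$, and that $\attCop(G) \ge 1$ holds trivially (with no cop on the board the robber is never captured; equivalently $\attCop(G) \ge \cop(G) \ge 1$ by Observation~\ref{Ob: Trivial Cop bound}). If one wants the argument in bare hands: the cop starts on a dominating vertex $v$, the robber must place in $N[v] = V(G)$, and the cop captures on its first turn --- if the robber even starts on $v$ it is caught at once, since by the stated convention this does not count as an attack.

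For the converse I would argue the contrapositive: if $\gamma(G) \ge 2$ then a single cop loses, so $\attCop(G) \ge 2$. Fix the cop's starting vertex $c_0$; since no vertex dominates $G$, we have $N[c_0] \ne V(G)$, so the robber may start at a vertex $r_0$ with $\dist(c_0, r_0) \ge 2$ --- in particular $r_0 \ne c_0$, so no exceptional start occurs. The robber's strategy is to stay put at $r_0$ while the cop is at distance at least $2$ and, the moment the cop steps onto a neighbour of $r_0$, to move onto that vertex, thereby attacking and removing the sole cop (after which evasion is trivial). I would verify by induction on the round number the invariant that, at the start of each round, the cop is at distance $\ge 2$ from $r_0$: given this, after the cop's move the cop is at distance $\ge 1$ from $r_0$ (so it has not captured), and then either it is still at distance $\ge 2$, in which case the robber passes and the invariant is restored, or it is at distance exactly $1$, in which case the robber attacks and wins. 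Since the cop's distance to the stationary robber decreases by at most one per round and begins at $\ge 2$, the cop can never land on $r_0$ before being attacked; hence the robber is never captured.

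I do not expect a genuine obstacle here --- the result is essentially a bookkeeping exercise. The one place to be careful is the ordering of moves within a round (the cop moves first, then the robber): this is exactly what guarantees that ``the cop first reaches distance $1$ from $r_0$'' strictly precedes ``the cop reaches $r_0$'', so that the robber's attacking move is always available in time. Correctly accounting for the convention that a robber starting on a cop's vertex does not count as an attack is the only other subtlety, and it matters only in the forward direction.
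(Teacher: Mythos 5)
Your proof is correct. The paper states this observation as a known result from \cite{bonato2014robber} and gives no proof of its own, so there is nothing to compare against; your argument (upper bound via Observation~\ref{Ob: Trivial upper bound} for one direction, and for the other the robber starting at distance at least $2$ from the lone cop, waiting, and attacking the moment the cop becomes adjacent) is exactly the standard argument one would give, and the move-order bookkeeping you carry out is sound.
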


As a starting point, to better understand how to characterise graphs with attacking cop number 2, we consider \emph{tandem-cops}. Introduced in \cite{clarke2005tandem}, two cops must be within distance one of each other after each move with the caveat that they are allowed to move along the sides of a $4$-cycle.  The two cops therefore move in ``tandem'' and if they can capture a robber on graph $G$, then $G$ is said to be {\it tandem-win}. Certainly, if a graph $G$ is tandem-win and $\gamma(G) \neq 1$, then $\attCop(G) = 2$.
It is known from \cite{clarke2005tandem} that a triangle-free graph is tandem-win if and only if it has a domination elimination ordering.
However, there exist triangle-free graphs that have attacking cop number $2$ and are not tandem-win.  As a simple example, let $H$ be the graph obtained by subdividing each edge of $K_{1,n}$ twice and then merging the leaves to a single vertex.  As  $\gamma(H)=2$, we have $\attCop(H)=2$.

Consequently, we require more than the tandem-win result from \cite{clarke2005tandem} to characterise triangle-free graphs with attacking cop number $2$. First, we consider what happens when a triangle-free graph contains a dominated vertex. 
We note that our assumption of the graph being triangle-free is necessary for the following lemma.

\begin{lemma}\label{Lemma: retract}
    Let $G = (V,E)$ be a triangle-free graph with $\attCop(G) \geq 2$.  If $u \in V$ is a dominated vertex in $G$ and $\gamma(G-u)\geq 2$, then $\attCop(G - u) = \attCop(G)$.
\end{lemma}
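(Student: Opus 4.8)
The plan is to prove the two inequalities $\attCop(G-u)\le\attCop(G)$ and $\attCop(G)\le\attCop(G-u)$ separately, in each case converting a winning cop strategy for one graph into a winning cop strategy for the other. Fix $v\neq u$ with $N(u)\subseteq N[v]$, and let $\phi\colon V(G)\to V(G-u)$ be the identity off $u$ with $\phi(u)=v$; by the domination hypothesis $\phi$ is a retraction of $G$ onto $G-u$, that is, it sends every edge to an edge or a loop. The only role of triangle-freeness is to supply the following dichotomy for the dominated vertex $u$: either $N(u)=\{v\}$, so $u$ is a leaf hanging off $v$, or $u\not\sim v$, since a neighbour common to $u$ and $v$ would form a triangle with the edge $uv$. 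Note also that $\attCop(G-u)\ge 2$ by Observation~\ref{Ob: cc(G)=1} (as $\gamma(G-u)\ge 2$) and $\attCop(G)\ge 2$ by hypothesis, so in each game there are at least two cops; these ``$\ge 2$'' assumptions are precisely what excludes the small exceptions, e.g.\ the case where $\gamma(G)=1$ is witnessed by $u$ itself and $G$ is a star.

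For $\attCop(G-u)\le\attCop(G)$, take a winning strategy $\sigma$ for $\attCop(G)$ cops on $G$ and restrict attention to robbers that never leave $G-u$; $\sigma$ still defeats all of these. Now modify $\sigma$ to a strategy $\sigma'$ that never moves a cop onto $u$: a move onto $u$ from a vertex $p$ is redirected to a move onto $v$, which is legal because $p\in N[u]\subseteq N[v]$ (in the leaf case a move from $p=u$ is redirected to staying at $v$). Since $v$ dominates $u$ and the robber never occupies $u$, the only thing this redirection gives up is the ability to capture a robber at $u$, which is never needed; one argues, using the dichotomy, that it also creates no attack against the redirected cop for which $\sigma$ was not already prepared. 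Then $\sigma'$ keeps all cops in $G-u$, so it is a legal strategy in the game on $G-u$, and it defeats every robber there.

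For $\attCop(G)\le\attCop(G-u)$, take a winning strategy $\tau$ for $k:=\attCop(G-u)\ge2$ cops on $G-u$. On $G$, the cops run $\tau$ against a \emph{shadow} robber sitting at $\phi(R)$, where $R$ is the true robber's vertex; one checks that $t\mapsto\phi(R_t)$ is a legal robber walk in $G-u$, the only nontrivial moves being into or out of $u$, which become moves into or out of $v$ through $N(u)\subseteq N[v]$. By assumption some cop eventually reaches the shadow's vertex $q$. If $q\neq v$ then $R=q$ and the robber is caught in $G$; the same holds if $q=v=R$. In the remaining case $q=v$ and $R=u$: a cop $c_j$ now sits on $v$ while the robber sits on $u$. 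If $u\not\sim v$, the robber cannot leave $u$ except into $N(u)\subseteq N(v)$, where $c_j$ catches it on the next cop move; so the cops hold $c_j$ on $v$ and escort a second cop (available as $k\ge2$) to a neighbour of $u$, after which the robber is caught. If $N(u)=\{v\}$, the cops instead get two cops onto $v$ on the same turn before the robber is forced to react; once two cops stand on $v$ with the robber on $u$, the robber loses whether it remains (both cops step to $u$) or flees to $v$ (it can remove only one of them and the other captures it). A potential argument on the progress of the shadow in the game on $G-u$ shows the robber cannot delay this indefinitely by shuttling between $u$ and its neighbourhood.

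The genuinely delicate point, and the reason triangle-freeness is assumed, is that the attack mechanic does not commute with $\phi$: an attack the robber can make against a cop on $v$ in the game on $G-u$ corresponds to the robber occupying $u$ in $G$, which is not an attack there, and conversely. Reconciling the two games' attacks is exactly where the leaf-or-nonadjacent dichotomy is used, and it also drives the endgame above in which the true robber tries to hide at $u$ while its shadow is captured at $v$. Handling that endgame --- in particular parking one cop on $v$ and using the other to finish off the robber at $u$ --- is where the availability of a second cop, hence the hypotheses $\gamma(G-u)\ge2$ and $\attCop(G)\ge2$, is essential.
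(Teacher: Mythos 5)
Your route is the same as the paper's: simulate across the map sending $u$ to $v$ in both directions, use triangle-freeness only to get the dichotomy ``$u$ is a leaf at $v$'' versus ``$u\not\sim v$'', and settle the case where the shadow is caught at $v$ while the true robber hides at $u$ by parking a cop on $v$ and bringing a second cop, which is exactly where $\attCop(G)\ge 2$ and $\gamma(G-u)\ge 2$ are used. The non-leaf endgame and the case $q\neq v$ coincide with the paper's argument.

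The concrete gap is the leaf case $N(u)=\{v\}$ of the second inequality. At the moment the simulated cops capture the shadow on $v$, the real game has a \emph{single} cop standing on $v$ with the robber on the adjacent vertex $u$, and it is the robber's turn: the robber can attack that cop immediately, and after the attack it sits on $v$ with one cop gone and no guarantee the remaining cops are anywhere useful. Your plan of ``getting two cops onto $v$ on the same turn before the robber is forced to react'' is precisely what the naive simulation does not deliver, and the ``potential argument'' you invoke to rule out indefinite delay is not supplied. The paper closes this by rewinding one move: just before the capturing move the designated cop is only in $N[v]$; it steps onto $v$ only if a second cop is already adjacent to some vertex of $N[v]$ (so that an attack on $v$ is answered at once), and otherwise it retreats to a vertex of $N(v)\setminus\{u\}$, where the robber cannot attack it and where it captures the robber should it ever move to $v$, waiting there until a backup cop reaches $v$; since $v$ is the robber's unique exit from $u$, the robber is then trapped. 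A smaller remark of the same kind applies to your first inequality: you only assert (``one argues, using the dichotomy'') that redirecting a cop from $u$ to $v$ exposes it to no attack the original strategy was unprepared for, whereas the paper sidesteps the redirection entirely by assuming an optimal strategy on $G$ never sends a cop to $u$ except to capture there, which suffices because the mirrored robber never occupies $u$. These two spots need actual arguments; the rest of your sketch matches the paper.
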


\begin{proof} Let $G=(V,E)$ be a triangle-free graph with $\attCop(G) = k \geq 2$ and let $u,v \in V$ such that $N(u) \subseteq N[v]$. We let $f:V\setminus\{u\} \rightarrow V$ be the identity map $f(x)=x$ for all $x \in V\setminus \{u\}$. Note that $f$ is an injective graph homomorphism, that is if $(x,y)\in E(G-u)$, then $(f(x),f(y)) \in E$. We begin by arguing that a winning cop strategy in $G$ can be modified to form a winning cop strategy in $G-u$.

    Consider a game of Cops and Attacking Robbers played on $G-u$ with $k = \attCop(G)$ cops. Let the cops playing in $G-u$ be $C_1,\dots, C_k$, and identify each cop with the vertex they currently occupy and do the same with the robber $R$. We will mirror this game on $G$ by placing cop $C'_i$ on vertex $f(C_i)$ for all $i \in [k]$ and placing the robber $R'$ on vertex $f(R)$. As $f$ is a graph homomorphism, if a cop or robber moves in $G-u$ from $x$ to $y$, then the corresponding cop or robber in $G$ can move from $f(x)$ to $f(y)$, and as $f$ is injective and $G-u$ is an induced subgraph of $G$ as long as $x,y \in V\setminus \{u\}$, then the converse is also true. For each move that the robber $R$ makes in $G-u$ from $x$ to $y$, let the robber, $R'$, in $G$ move from $f(x)$ to $f(y)$. Then let the cops $C'_1,\dots, C'_k$ respond in $G$ using their winning strategy. 
   Notice that as $N(u) \subseteq N[v]$, the cops never need to move onto $u$ when playing optimally in $G$ except to capture the robber on $u$. We suppose without loss of generality that cop $C'_i$ never moves to $u$ except to capture the robber on $u$. Given this, if cop $C'_i$ moved from $f(x)$ to $f(y)$ let the cop $C_i$ move from $x$ to $y$ in $G-u$. This process is well-defined as $f$ is injective and for all $x \neq u$, $x \in V\setminus \{u\}$.

    As $\attCop(G) = k$ after finitely many moves the cops in $G$ will capture the robber $R'$. Given $f(R)=R'$, this implies the cops in $G-u$ have also captured the robber in $G-u$. Hence, $\attCop(G-u) \leq \attCop(G)$.

    Now suppose that $\attCop(G-u)= t$. By Observation~\ref{Ob: cc(G)=1} our assumption that $\gamma(G-u)\geq 2$ implies that $t\geq 2$. We proceed by a similar argument except now each cop $C_i$ is following their winning strategy in $G-u$ to catch the robber $R=f^{-1}(R')$ where we extend $f^{-1}$ to include that $f^{-1}(u) = v$ and allow the robber $R'$ to pursue their optimal strategy in $G$. As $\attCop(G-u)=t$, after finitely many moves the cops in $G-u$ will catch the robber $R = f^{-1}(R')$. Then the cops have captured the robber in $G$ unless $R' = u$. Suppose $R'=u$. As the cops in $G-u$ have captured the robber $R = f^{-1}(R') = f^{-1}(u) = v$, there is a cop $C_i = v$ in $G-u$, implying that there is a cop $C'_i = v$ in $G$.

    Since $G$ is triangle-free and $N(u) \subseteq N[v]$, observe that $v \in N(u)$ if and only if $\deg(u)=1$. At this point we assume the cops have just captured the robber in $G-u$ so it is the robber's turn. If $\deg(u) = 1$ we proceed as follows. On the previous turn in $G$, cop $C'_i$ is in $N[v]$ and $R' = u$. If there is another cop $C'_j$ adjacent to a vertex $z \in N[v]$, then $C'_i$ moves to $v$ and $C'_j$ moves to $z$: the cops will win during their next turn.
    Otherwise, $C'_i$ moves to a vertex in $N(v)\backslash \{u\}$ and waits for a backup cop to arrive (capturing the robber in the interim if the robber moves to $v$).  As $t \geq 2$ and $G$ is connected, after finitely many turns, there will be a cop $C'_j$ at $v$.
     At this point proceed as before. Given $v$ is the unique neighbour of $u$ during this time the robber must remain at $u$ or be immediately captured. Thus, the cops win if $\deg(u)=1$.

    If $\deg(u) \neq 1$, then $v \notin N(u)$. In this case recall that $C'_i=v$.
    Then $C'_i$ waits at $v$ until a backup cop $C'_j$ arrives at a vertex $N(u)$ after finitely many turns. 
    If the robber attacks $C'_j$ they will be captured by $C'_i$. If the robber moves elsewhere they will be captured by $C'_i$. If the robber does not move, they will be captured by $C'_j$. Thus, the cops win if $\deg(u) \neq 1$.

    Thus, if  $\gamma(G-u) \geq 2$, then $\attCop(G-u) \geq \attCop(G)$. Therefore, $\attCop(G)=\attCop(G-u)$ as required.
\end{proof}

The next step is to show that if $G$ is a triangle-free graph and with no dominated vertices, then either its attacking cop number is at least three or $\gamma(G)$ is small. That is, a triangle-free graph with large domination number and no dominated vertex must also have attacking cop number at least three. This fact will be critical to proving our characterisation.

\begin{lemma}\label{Lemma: The needed lower bound} Let $G=(V,E)$ be a triangle-free graph that contains no dominated vertices and for which $\gamma(G) \geq 3$.  Then $\attCop(G) > 2$.\end{lemma}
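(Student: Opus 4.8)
The plan is to show that two cops cannot win on such a graph $G$, by exhibiting a robber strategy that evades capture indefinitely. So suppose for contradiction that $\attCop(G) \leq 2$, i.e., two cops $C_1, C_2$ have a winning strategy. I want to extract structure from this. The natural first move is to look at a "critical" position just before capture — or rather, to think about what the cop positions must look like for the cops to make progress. Since $G$ is triangle-free, if at some point both cops occupy the same vertex $w$ (or are "doubled up" in the sense that one is a backup for the other on the $4$-cycle rule), then effectively they function like a single cop with a backup, which is essentially the tandem-cops situation. Since $G$ is triangle-free with a domination elimination ordering iff it is tandem-win \cite{clarke2005tandem}, and since $\gamma(G) \geq 3$ with no dominated vertices means $G$ has no domination elimination ordering (a single elimination step would require a dominated vertex), $G$ is \emph{not} tandem-win. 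This handles the "cops stay together" scenario.

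**The main argument.** The hard case is when the two cops separate and try to trap the robber between them. Here I would argue as follows: consider the last robber turn before a (hypothetical) capture. Just before the cops capture, the robber sits at some vertex $r$ with $N[r]$ covered by the two cops' reachable sets; in particular, since each cop covers a closed neighbourhood of size at most $\Delta+1$, and the robber can attack, the robber at $r$ must have the property that every vertex of $N[r]$ is within distance $1$ of $C_1$ or $C_2$, AND moving onto a cop is not safe. Moving onto a cop $C_i$ removes $C_i$; for this to still lose, the other cop $C_j$ must be adjacent to that vertex. So the "trapping" configuration forces: there exist vertices $x_1 \in N[C_1]$, $x_2 \in N[C_2]$ with $r$ adjacent to both $C_1$-side and $C_2$-side, and crucially $C_1, C_2$ must dominate $N[r]$ jointly while also covering each other's positions. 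I would then use triangle-freeness and the no-dominated-vertex hypothesis to find an escape vertex for the robber. The key point: because $r$ has no dominated vertex structure and $\gamma(G) \geq 3$, the closed neighbourhood $N[r]$ cannot be covered by $N[a] \cup N[b]$ for the cop vertices $a, b$ \emph{in a way that is also robust to attack} — there is always a vertex $r' \in N(r)$ from which the robber, after the cop response, is again in a "free" position. Formalizing this requires a potential/invariant: I would define a position to be \emph{safe} for the robber if the robber is at a vertex $r$ such that $N[r] \not\subseteq N[a] \cup N[b]$ where $a,b$ are the current cop vertices, and argue this invariant can be maintained.

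**Maintaining the invariant.** To maintain safety: suppose the robber is at a safe vertex $r$ and the cops move to $a', b'$. I must find $r' \in N[r]$ that is safe against $(a', b')$ and such that moving $r \to r'$ does not get captured (i.e., $r' \neq a'$ unless attacking is viable, $r' \neq b'$ similarly). Since $r$ was safe, there is $s \in N[r] \setminus (N[a] \cup N[b])$. After the cop move, $a' \in N[a]$, $b' \in N[b]$. I'd want to track such "witness" vertices as the game evolves, and use triangle-freeness to ensure that when a cop approaches to threaten one witness, another emerges. The role of $\gamma(G) \geq 3$ is that two closed neighbourhoods $N[a'] \cup N[b']$ never dominate $G$, so globally there is always an undominated vertex; the role of triangle-freeness and no-dominated-vertices is local — it prevents the cops from "cornering" the robber in a small neighbourhood where the geometry would otherwise let two cops cover everything. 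I would likely split into cases based on whether the two cops are far apart (robber plays locally, staying in a region the far cop cannot influence, and faces effectively one cop plus possible attack — but one cop cannot even catch a robber unless $\gamma = 1$, contradiction with $\gamma \geq 3$ — need care with the attack mechanic) or close together (near-tandem situation, handled via the non-tandem-win observation above, with attention to the fact that genuine tandem play also forbids certain configurations).

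**Expected main obstacle.** The delicate part is the interface between the "cops together" and "cops apart" regimes, and precisely handling the attack mechanic in the invariant: a vertex that looks safe in classical Cops and Robbers may not be safe here because the robber's attack option is a double-edged sword — it can remove a cop but also the robber must survive the other cop's response — yet it is also the robber's extra power. I expect the cleanest route is to define safety as: the robber is at $r$, and either (a) $N[r] \not\subseteq N[a] \cup N[b]$, or (b) $r$ is adjacent to a cop $C_i$ whose removal would leave the robber genuinely free (i.e., $N[r] \not\subseteq N[b']$ for all possible responses $b'$ of the other cop, which holds automatically since $\gamma \geq 2$, actually $\geq 3$). The bookkeeping to show one of (a), (b) persists after every cop move — using triangle-freeness to rule out a cop simultaneously threatening both the robber's escape routes and backing up the other cop — is where the real work lies, and I would organize it as a case analysis on $\dist(a,b)$ together with the structure of $N[r]$.
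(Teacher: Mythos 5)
Your proposal is a strategy sketch rather than a proof: at the decisive point you write that the bookkeeping needed to maintain your ``safe position'' invariant ``is where the real work lies,'' and that work is never done. The paper needs none of it. Its proof is a short critical-position argument: assuming two cops win, consider the round $t-1$ position where the robber at $y$ is about to be captured no matter what. Because the robber may pass, some cop $C_1$ must sit on a vertex $x\in N(y)$ after the cops' move; because the robber may move to $x$ and attack $C_1$, the other cop must occupy some $w\in N[x]\setminus\{y\}$; and because the robber may instead step to any $s\in N(y)\setminus\{x\}$ (which exists since a degree-one vertex would be dominated), triangle-freeness gives $s\notin N(x)$, so $C_1$ cannot make the capture and hence $w$ must be adjacent to every vertex of $N(y)\setminus\{x\}$. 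Since also $x\in N[w]$, we get $N(y)\subseteq N[w]$ with $w\neq y$, i.e.\ $y$ is dominated --- contradicting the hypothesis. You gesture at exactly this configuration (``the other cop $C_j$ must be adjacent to that vertex'') but never extract the dominated vertex from it; instead you pivot to constructing a full evasion strategy, which is both unnecessary and left incomplete.

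Two further points in your plan would not survive scrutiny as written. First, the split into ``cops stay together'' (reduced to tandem-cops via the Clarke result) versus ``cops separate'' is not a legitimate partition of cop strategies: cops in the attacking game may merge and split repeatedly, and being non-tandem-win says nothing about a strategy that is only intermittently tandem-like. Second, your candidate invariant $N[r]\not\subseteq N[a]\cup N[b]$ is not obviously maintainable, and the hypotheses $\gamma(G)\geq 3$, triangle-freeness, and no dominated vertices enter your argument only as vague assurances that an escape exists; the lemma's entire content is the concrete deduction that a winning two-cop strategy forces a dominated vertex, and that deduction is missing from your proposal.
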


\begin{proof} Let $G=(V,E)$ be a triangle-free graph that contains no dominated vertices and for which $\gamma(G) \geq 3$.  Observe that the latter condition implies $\attCop(G) \geq 2$ by Observation~\ref{Ob: cc(G)=1}.  For a contradiction, assume $\attCop(G) = 2$.

Suppose the cops initially occupy $u,v \in V$ (not necessarily distinct).  Since $\gamma(G) \geq 3$, there is a vertex $z \in V$ such that $z \not\in N[u] \cup N[v]$.  The robber $R$ initially occupies such a vertex $z$ to avoid immediate capture.  As $\attCop(G)=2$, there is a winning strategy for the cops where, for some round $t>0$, the robber occupies vertex $y$ at the beginning of round $t-1$ and regardless of the move made by the robber during round $t-1$, a cop will capture the robber during round $t$. Since $R$ could pass during round $t-1$, after the cops move during round $t-1$, there must be a cop on a neighbour $x$ of $y$; this cop moves to $y$ and captures $R$ during round $t$ if the robber passes.  Thus, after the cops move during round $t-1$, a cop, say $C_1$, must occupy $x \in N(y)$.  Observe that $R$ could move to $x$ and attack $C_1$ during round $t-1$.  Since the robber is captured during round $t$,  cop $C_2$ must occupy some vertex $w \in N[x]\backslash \{y\}$. Notice that any degree $1$ vertex is dominated, so we can suppose $\deg(y)>1$.

As $\deg(y)>1$, $R$ may move to $s \in N(y)\setminus \{x\}$ during round $t-1$. Since $G$ is triangle-free, $s$ is not adjacent to $x$: in this case $C_1$ cannot capture $R$ during round $t$. However, as $R$ is captured during round $t$, $w$ must be adjacent to every vertex in $N(y) \setminus \{x\}$ to enable $C_2$ to capture $R$ during round $t$. Given $x \in N(w)$ there exists a $w \neq y$ such that $N(y) \subseteq N(w)$. Since $N(w) \subset N[w]$ this implies that $N(y) \subseteq N[w]$ and hence $y$ is a dominated vertex, which is a contradiction. Thus, $\attCop(G) > 2$ as desired.
\end{proof}

Figure~\ref{fig: cc2 on triangle} illustrates that Lemma~\ref{Lemma: The needed lower bound} does not necessarily hold when the condition that $G$ is triangle-free is removed. It is not clear if a weaker or alternative version of Lemma~\ref{Lemma: The needed lower bound} holds for graphs which contain triangles. With this, we are prepared to prove the main result of this section.
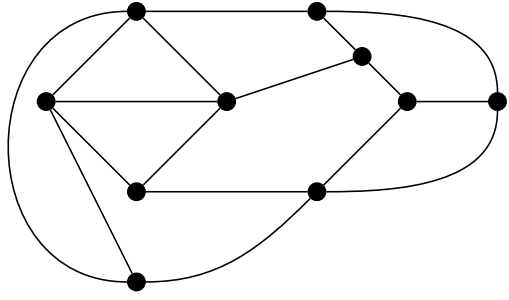
\begin{figure}[h]
\begin{minipage}[t]{0.5\textwidth}
\scalebox{0.6}{
\begin{tikzpicture}[node distance={15mm}, thick, main/.style = {draw, circle,}] 

\node[main][fill= black] (1) at (0,3) {}; 
\node[main][fill= black] (2) at (2,-1) {}; 
\node[main][fill= black] (3) at (2,1) {};
\node[main][fill= black] (4) at (2,5) {}; 
\node[main][fill= black] (5) at (4,3) {};
\node[main][fill= black] (6) at (6,1) {}; 
\node[main][fill= black] (7) at (6,5) {};
\node[main][fill= black] (8) at (7,4) {}; 
\node[main][fill= black] (9) at (8,3) {};
\node[main][fill= black] (10) at (10,3) {}; 

\draw [line width=1.pt] (1) -- (2);
\draw [line width=1.pt] (1) -- (3);
\draw [line width=1.pt] (1) -- (4);
\draw [line width=1.pt] (1) -- (5);

\draw [line width=1.pt] (2) to[out=180,in=180, looseness=1.5] (4);
\draw [line width=1.pt] (2) to[out=0,in=-135, looseness=1] (6);

\draw [line width=1.pt] (3) -- (5);
\draw [line width=1.pt] (3) -- (6);

\draw [line width=1.pt] (4) -- (5);
\draw [line width=1.pt] (4) -- (7);

\draw [line width=1.pt] (5) -- (8);

\draw [line width=1.pt] (6) -- (9);
\draw [line width=1.pt] (6) to[out=0,in=-90, looseness=1] (10);

\draw [line width=1.pt] (7) -- (8);
\draw [line width=1.pt] (7) to[out=0,in=90, looseness=1] (10);

\draw [line width=1.pt] (8) -- (9);

\draw [line width=1.pt] (9) -- (10);
\end{tikzpicture}}
\end{minipage}
\caption{An example of a graph $G$ with at least one triangle, no dominated vertices, and domination number three such that $\attCop(G)=2$.}
\label{fig: cc2 on triangle}
\end{figure}

\begin{theorem}\label{Thm: cc(G)=2}
Let $G = (V,E)$ be a triangle-free graph.  Then $G$ has $\attCop(G)\leq 2$ if and only if $\gamma(G)\leq 2$ or for some $k \in \{1,\dots,n\}$, there is a $k$-partial domination elimination ordering $S_k$ of $G$ for which $\gamma(G -S_k) \leq 2$.
\end{theorem}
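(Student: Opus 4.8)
The plan is to run an induction that peels dominated vertices off $G$ one at a time, using Lemma~\ref{Lemma: retract} to see that such a deletion does not change $\attCop$ (as long as the domination number stays at least $2$) and the contrapositive of Lemma~\ref{Lemma: The needed lower bound} to guarantee that a triangle-free graph with $\attCop \le 2$ and $\gamma \ge 3$ actually has a dominated vertex to peel.

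$(\Leftarrow)$ Suppose the right-hand condition holds. If $\gamma(G) \le 2$, then $\attCop(G) \le \gamma(G) \le 2$ by Observation~\ref{Ob: Trivial upper bound}. Otherwise, fix a $k$-partial domination elimination ordering $S_k = (v_1, \dots, v_k)$ with $\gamma(G - S_k) \le 2$, and prove by downward induction on $j$ that $\attCop(G - S_j) \le 2$ for every $0 \le j \le k$; the case $j = 0$ is the statement. The base case $j = k$ is immediate from Observation~\ref{Ob: Trivial upper bound}. For the inductive step, assume $\attCop(G - S_j) \le 2$. If $\gamma(G - S_{j-1}) \le 2$, then $\attCop(G - S_{j-1}) \le 2$ by Observation~\ref{Ob: Trivial upper bound}; otherwise $\gamma(G - S_{j-1}) \ge 3$, so $\attCop(G - S_{j-1}) \ge 2$ by Observation~\ref{Ob: cc(G)=1}, and, since deleting a single vertex decreases the domination number by at most one, $\gamma(G - S_j) \ge 2$ as well. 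Since $G - S_{j-1}$ is triangle-free and $v_j$ is a dominated vertex of $G - S_{j-1}$ (the vertex witnessing this, occurring later in the ordering, is still present in $G - S_{j-1}$), Lemma~\ref{Lemma: retract} applied to $G - S_{j-1}$ with deleted vertex $v_j$ yields $\attCop(G - S_{j-1}) = \attCop(G - S_j) \le 2$, closing the induction.

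$(\Rightarrow)$ Suppose $\attCop(G) \le 2$. We may assume $\gamma(G) \ge 3$, so that in particular $\attCop(G) \ge 2$. Since $G$ is triangle-free with $\gamma(G) \ge 3$ and $\attCop(G) \le 2$, the contrapositive of Lemma~\ref{Lemma: The needed lower bound} provides a dominated vertex $v_1$ of $G$. If $\gamma(G - v_1) \le 2$ we stop; otherwise $\gamma(G - v_1) \ge 3$, Lemma~\ref{Lemma: retract} gives $\attCop(G - v_1) = \attCop(G) \le 2$, and $G - v_1$ is again triangle-free, so we repeat the argument inside $G - v_1$ to choose $v_2$, and so on. Since $G$ is finite the process halts, producing $v_1, \dots, v_k$ with each $v_i$ dominated in $G - S_{i-1}$ and with $\gamma(G - S_k) \le 2$; up to recording each dominator inside the sequence as the definition of a $k$-partial domination elimination ordering demands, this is the ordering we need.

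The step I expect to require the most care is the final bookkeeping in $(\Rightarrow)$: to certify the peeled sequence as a bona fide $k$-partial domination elimination ordering one must, at each stage, pick a dominated vertex whose dominator is deleted later in the sequence (or argue the deletion order can be permuted to make this so). The structural facts that make this manageable are that in a triangle-free graph a dominated vertex $u$ is either a leaf, dominated only by its unique neighbour, or satisfies $N(u) \subseteq N(w)$ for some $w \notin N(u)$; following how the leaf case cascades under deletion (the neighbour of a removed leaf loses a neighbour, and may itself become a leaf) is the delicate part. A smaller point, already used above, is the arithmetic that deleting a vertex drops $\gamma$ by at most one, which is what keeps the $(\Leftarrow)$ induction from jumping past the range $\gamma \ge 2$ on which Lemma~\ref{Lemma: retract} operates, with the threshold case $\gamma = 2$ handled directly by Observation~\ref{Ob: Trivial upper bound}.
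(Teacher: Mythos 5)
Your proof is correct in substance and follows the same route as the paper: the same two lemmas (Lemma~\ref{Lemma: retract} and Lemma~\ref{Lemma: The needed lower bound}) drive the same peeling argument, and your backward direction is in fact a cleaner organization of the paper's --- the downward induction, together with the observation that deleting one vertex lowers $\gamma$ by at most one, replaces the paper's case analysis locating the largest $j$ with $\attCop(G-S_j)>1$.

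The step you flagged as delicate --- certifying the peeled sequence as a $k$-partial domination elimination ordering whose dominators occur \emph{later in the sequence} --- is an issue only because of how the paper words the definition, and the paper's own proof is no more careful there: it simply declares the peeled prefix to be the desired ordering. Note, moreover, that the literal definition cannot be what is intended: it imposes no condition on $v_k$ (and for $k=1$ no condition at all), so for $C_7$ any single vertex $v$ is vacuously a $1$-partial domination elimination ordering with $\gamma(C_7-v)=2$, while $\attCop(C_7)=3$; read literally, the backward implication would be false. The same quirk touches your backward induction at $j=k$, where you justify that $v_k$ is dominated in $G-S_{k-1}$ by ``the vertex witnessing this, occurring later in the ordering'' --- no such later vertex exists when $j=k$. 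Under the intended reading --- each $v_i$, $1\le i\le k$, is dominated in $G-S_{i-1}$ by some vertex of $G-S_{i-1}$, not necessarily one recorded in the sequence --- both of these points dissolve: your peeled sequence qualifies with no extra bookkeeping, your $j=k$ step is legitimate, and your argument is complete and matches the paper's.
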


\begin{proof}
Let $G = (V,E)$ be a triangle-free graph for which $\attCop(G) \leq 2$. If $\attCop(G) =1$, then Observation~\ref{Ob: cc(G)=1} implies $\gamma(G) =1$. Suppose $\attCop(G) \neq 1$ for the remainder of the proof. 

We begin by showing that if $\attCop(G) \leq 2$, then $\gamma(G)\leq 2$ or for some $k \in \{1,\dots,n\}$, there is a $k$-partial domination elimination ordering $S_k$ of $G$ for which $\gamma(G -S_k) \leq 2$. 

Suppose that $\attCop(G) = 2$. Let $G_0=G$. If $G$ contains at least one dominated vertex, then form the sequence of vertices $v_1,v_2,\dots,v_k$ inductively, by considering $G_{i-1} = G - \{v_1,\dots, v_{i-1}\}$ and letting $v_i$ be a fixed but arbitrary dominated vertex in $G_{i-1}$ whenever such a vertex exists.  For $k \geq 1$, let $v_1,\dots, v_{k}$ be any such sequence that is maximal, that is $G_k = G - \{v_1,\dots, v_k\}$ has no dominated vertices.  If $G$ contains no dominated vertex, set $k=0$. If $\gamma(G_k)\leq 2$, then we have proven our result. 

Suppose that $\gamma(G_k)\geq 3$. Since $G$ is triangle-free, $G_k$ is also triangle-free.
We may assume that $\gamma(G_i) \geq 3$ for all $1 \leq i \leq j$ because if $\gamma(G_j) \leq 2$ for some $j<k$, then $(v_1,v_2,\dots,v_j)$ would be the desired $j$-partial domination elimination ordering.
Then Lemma~\ref{Lemma: retract} implies $\attCop(G) = \attCop(G_1)= \dots = \attCop(G_k) = 2$. Hence, $G_k$ is a triangle-free graph that contains no dominated vertices and for which $\gamma(G_k) \geq 3$. Thus, Lemma~\ref{Lemma: The needed lower bound} implies that $\attCop(G_k) > 2$, contradicting $\attCop(G_k) = 2$. Hence, $\gamma(G_k) \leq 2$ as required. 

For the other direction, we will prove that if $\gamma(G)\leq 2$ or for some $k \in \{1,\dots,n\}$, there is a $k$-partial domination elimination ordering $S_k$ of $G$ for which $\gamma(G -S_k) \leq 2$, then $\attCop(G) \leq 2$. Suppose  that $\gamma(G)\leq 2$ or for some $k \in \{1,\dots,n\}$, there is a $k$-partial domination elimination ordering $S_k$ of $G$ for which $\gamma(G -S_k) \leq 2$.

If $\gamma(G) \leq 2$, then $\attCop(G) \leq 2$ by Observation~\ref{Ob: Trivial upper bound}.  Suppose $\gamma(G) \geq 3$. Then for some $k \in \{1,\dots,n\}$ there 
is a $k$-partial domination elimination ordering $S_k = (v_1,v_2,\dots,v_k)$ of $G$ and $\gamma(G-S_k) \leq 2$. Observation~\ref{Ob: Trivial upper bound} implies that $\attCop(G-S_k) \leq 2.$ Note that Observation~\ref{Ob: cc(G)=1} implies that if $\attCop(G-S_i) = 2$, then $\gamma(G-S_j) \geq 2$ for all $1\leq j \leq i$.  So we conclude that if $\attCop(G-S_i) = 2$ for some $i \in \{1,2,\dots,k\}$ then Lemma~\ref{Lemma: retract} implies $\attCop(G) = \attCop(G - S_1) = \dots = \attCop(G - S_i) = 2$. Hence, if  $\attCop(G-S_k) = 2$, then $\attCop(G) = 2$.

Otherwise, $\attCop(G-S_k) = 1$. This implies $\gamma(G-S_{k})=1$ by Observation~\ref{Ob: cc(G)=1}. Then either for all $i \in \{1,\dots, k\}$, $\attCop(G-S_i) = 1$ or there exists a $1 \leq j < k$ such that $j$ is the largest integer where $\attCop(G-S_j) > 1$. If for all $i \in \{1,\dots, k\}$, $\attCop(G-S_i) = 1$, then $\attCop(G-v_1)=1$. This implies $G-v_1$ has a universal vertex, which contradicts the assumption that $\gamma(G)\geq 3$. So there must be a largest integer $1\leq j < k$, such that $\attCop(G-S_j)>1$. 

Letting $j$ be such an integer we claim that $\attCop(G-S_j) = 2$. As $j < k$, there exists an integer $j+1 \leq k$ and $\attCop(G-S_{j+1}) = 1$. Then Observation~\ref{Ob: cc(G)=1} tells us that $\gamma(G-S_{j+1})=1$. So there exists a universal vertex $u$ in $G-S_{j+1}$. Hence, $\{u,v_{j+1}\}$ is a dominating set in $G-S_j$, which implies that $\gamma(G-S_j) \leq 2$, implying the desired result that $1 < \attCop(G-S_{j}) \leq 2$ by Observation~\ref{Ob: Trivial upper bound} and Observation~\ref{Ob: cc(G)=1}. So $\attCop(G-S_{j}) = 2$. From here Lemma~\ref{Lemma: retract} implies that $\attCop(G) = \attCop(G-S_{j}) = 2$. This proves the desired result. 

Therefore, $G$ has $\attCop(G)\leq 2$ if and only if $\gamma(G)\leq 2$ or for some $k \in \{1,\dots,n\}$, there is a $k$-partial domination elimination ordering $S_k$ of $G$ for which $\gamma(G -S_k) \leq 2$. This completes the proof.
\end{proof}

\section{Planar Graphs}\label{sec:planar}

Bonato et al.~\cite{bonato2014robber} showed that $\attCop(G) \leq c(G) + 2$ for all bipartite graphs $G$. Hence, it is known that every bipartite planar graph has attacking cop number at most $5$. Restricting our attention to bipartite planar graphs allows us to generalise both the cops' and robber's strategies for planar graphs from classical Cops and Robbers more easily than we can for general planar graphs. In this section we show every bipartite planar graph has attacking cop number at most $4$, which is tight (see Theorem~\ref{Thm: bipartite planar}). 

Before doing this however, we characterise outerplanar graphs with attacking cop number $2$. Recall that it was shown by Bonato et al.~\cite{bonato2014robber} that all outerplanar graphs have attacking cop number at most $3$. 

\begin{theorem}\label{Thm: Outerplanar}
    Let $G = (V,E)$ be an outerplanar graph with no universal vertex, and with a fixed outerplanar embedding $\Pi$. Then $\attCop(G) = 2$ if and only if there exists at most one internal $k$-face of $\Pi$ where $k\in \{5,6\}$, and no internal $t$-face of $\Pi$ where $t > 6$.
\end{theorem}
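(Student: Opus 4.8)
The plan is to prove both directions by analyzing how two cops can (or cannot) control an outerplanar graph $G$ whose vertices all lie on the outer face. Throughout, I would exploit the standard fact that the weak dual of an outerplanar embedding is a tree, so the internal faces are linearly or tree-like structured, and any two cops placed on a vertex of an internal face together dominate that face if and only if the face has at most $4$ vertices (since a $C_4$ has two vertices dominating it, while $C_5$ and larger cycles do not). I would also use that in an outerplanar graph, a single vertex $v$ ``guards'' the portion of the graph cut off by any chord or any edge incident to $v$ in a strong sense, because the boundary is a cycle and the interior is tree-structured.

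For the \textbf{sufficiency} direction (the face condition implies $\attCop(G)\le 2$), I would give an explicit two-cop strategy. Root the weak dual tree and have the cops sweep the internal faces one at a time, maintaining the invariant that the robber is confined to an ever-shrinking region bounded by the current ``frontier'' face together with part of the outer boundary. Because every internal face has at most $4$ vertices except possibly one face of size $5$ or $6$, two cops sitting on appropriate vertices of a size-$\le 4$ face dominate it and can therefore safely step across it (the attacking issue never arises, since a robber stepping onto a cop on such a face is adjacent to the other cop). The single exceptional $5$- or $6$-face is handled by a one-time special maneuver: when the cops reach it, they use the fact that $G$ has no universal vertex together with outerplanarity to pin the robber on the far side, then traverse the large face carefully, accepting that one cop may be attacked but arranging that the other cop recaptures — this is essentially the outerplanar-$3$-cop argument of Bonato et al.\ restricted so that the ``third cop'' is not needed because there is at most one large face. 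The main bookkeeping is to show the frontier can always be advanced; I would do this by induction on the number of internal faces, peeling leaves of the weak dual.

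For the \textbf{necessity} direction, I would show that two large faces, or one face of size $\ge 7$, gives the robber an escape, hence forces $\attCop(G)\ge 3$ (and combined with Observation~\ref{Ob: cc(G)=1} and the no-universal-vertex hypothesis, $\attCop(G)=3$ by the Bonato et al.\ bound, though we only need $>2$). If there is an internal $t$-face $F$ with $t\ge 7$, the robber plays on $F$ as on a large cycle: with only two cops, the attacking mechanic means the cops cannot both threaten adjacent arcs of $F$ without leaving a gap, and a $C_7$ (where $\attCop(C_7)=3$, as noted in Figure~\ref{fig: Small Graph Examples}) embeds as a ``protected'' subconfiguration because the rest of $G$ attaches to $F$ only along the outer boundary. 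If there are two distinct internal faces $F_1,F_2$ each of size $5$ or $6$, I would argue that the robber can shuttle between them: the weak-dual path between $F_1$ and $F_2$ consists of faces, and whenever the two cops commit to dominating one of $F_1,F_2$ they must both be near it, giving the robber time to relocate to the other along the boundary cycle; formalizing this needs a potential/distance argument showing the cops cannot simultaneously cut both escape routes. I expect \emph{this} case — ruling out two medium faces — to be the main obstacle, since it is a genuinely dynamic pursuit argument rather than a static domination count, and care is needed to handle the faces lying between $F_1$ and $F_2$ and the interaction with the outer boundary; I would likely isolate it as a separate lemma with an explicit robber strategy and an invariant of the form ``the robber can always reach a vertex of $F_1\cup F_2$ at distance $\ge 3$ from both cops.''
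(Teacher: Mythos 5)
There are two genuine gaps, one in each direction, and both trace to the same mechanism. In your sufficiency argument you defer the exceptional $5$- or $6$-face to a ``one-time special maneuver'' executed when the sweep reaches it. But the robber can simply begin the game \emph{on} that face, and then the face cannot be cleared by two cops at all: since $G$ has no universal vertex, a single surviving cop cannot win, so no cop may ever enter the robber's closed neighbourhood without an adjacent backup; hence the cops can only press the robber while occupying adjacent vertices, and an adjacent pair dominates at most four consecutive vertices of an induced $5$- or $6$-cycle (outerplanarity makes the face induced and gives outside vertices at most two, adjacent, neighbours on it), so the robber just circles the face forever. This is exactly the evasion that makes a $\geq 7$-face or a second medium face a robber win, and your ``accept that one cop may be attacked, the other recaptures'' escape hatch does not help: guaranteed recapture requires the partner to be adjacent to the attacked cop, i.e.\ the cops are in tandem anyway, and in tandem they cannot dominate the face; if recapture is not guaranteed, the robber trades a cop and wins against the lone survivor. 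The paper's strategy avoids this by exploiting the order of placement: the two cops \emph{open} the game by dominating the unique $5$/$6$-face (two cops suffice for $C_5$ or $C_6$), so the robber can never occupy or enter it, and what remains to sweep consists only of triangles, $4$-faces and tree-like pieces, which a mutually-backing tandem pair can clear. Your weak-dual sweep is fine for those small faces, but the timing of the large face is not a detail --- it is the crux, and as proposed the strategy fails.

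In the necessity direction, the case of two internal faces of size $5$ or $6$ --- which you yourself flag as the main obstacle --- is left as an unproven ``shuttle'' lemma with a dynamic potential invariant. No such dynamic argument is needed, and leaving it unformalized leaves the direction incomplete. The short static argument: in an outerplanar embedding two internal faces share at most one edge, so two cops cannot dominate both medium faces at initial placement; the robber starts on the undominated face at distance at least $2$ from both cops and never leaves it, winning by the same reasoning as above (a cop entering the robber's neighbourhood without adjacent backup is attacked, after which one cop cannot win since $\gamma(G)\geq 2$; cops forced to stay adjacent cover at most four consecutive vertices of the $\geq 5$-cycle). Your $t\geq 7$ case is essentially the paper's argument and is fine, but the two-face case as written is a gap, and the shuttle route would in any event be harder than the problem requires.
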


\begin{proof}
For the reverse implication, since $G$ does not have a universal vertex, $\attCop(G)\geq 2$. 
Let $G = (V,E)$ be an outerplanar graph with no universal vertex, and at most one internal $k$-face of $\Pi$ where $k\in \{5,6\}$, and no internal $t$-face of $\Pi$ where $t > 6$.
We show that given the structure of $G$, two cops suffice to capture the robber.

If the cops can place themselves on a dominating set, then the robber will be captured on the first round. If this is not the case, cops $C_1$ and $C_2$ start the game by dominating the $k$-face, where $k \in \{5,6\}$, if such a face exists. The robber can then place themselves at least distance two away from cops, and  survive the first round. This initial placement of the cops ensures that the robber cannot enter the $k$-face, and the cops can move in such a way that the $k$-face is always part of the cop territory, as $G$ is outerplanar. 

From the initial placement, on their turn, at least one cop must move to decrease their distance to the robber, while the other cop acts as a backup cop. As $G$ is outerplanar and $G$ without the vertices of the $k$-face is comprised only of three cycles, four cycles and/or tree-like structures, the cops can move in tandem along the facial walk of the infinite face in the outerplanar embedding, while remain backup to each other and preventing the robber from entering the cop territory. Here the cop territory is all vertices $u$ such that every path from the robber to $u$ include a vertex from the closed neighbourhood of one or both cops. The cops clear the faces, one at a time, as they decrease their distance to the robber. If they are on two vertices of a $3$-cycle, the cop nearer the robber remains fixed, while the other cop moves towards the robber by moving to the third vertex of the $3$-cycle. 
If the cops on a $3$-cycle are at the same distance from the robber, then they move in tandem towards the robber.
If the cops are on vertices of a $4$-cycle, they move in tandem along the $4$-cycle toward the robber. 
If the cops must traverse a portion of the graph involving a cut-vertex, the cop occupying the cut-vertex remains fixed, while the backup cop joins them on their vertex, then they move in tandem again toward the robber. 

If no such $k$-face exists, the cops initially place themselves on adjacent vertices and move in tandem toward the robber using the same strategy as before.
At each step, the cops increase their territory and get closer to the robber. As the graph is finite and the cops are moving in tandem, they are protecting against attacks and will capture the robber.

For the forward implication, suppose $G$ is an outerplanar graph with a fixed outerplanar embedding $\Pi$, no universal vertex, and suppose that $\attCop(G)=2$. Suppose by way of contradiction at least one of the following hold:
\begin{enumerate}
\item $\Pi$ contains at least two internal $k$-faces where $k \in \left\{5,6\right\}$, or
\item there exists an internal $t$-face, where $t \geq 7$.
\end{enumerate}

In both cases, we will show that two cops cannot capture the robber if the robber plays optimally. 

\begin{enumerate}
    \item Suppose $\attCop(G)=2$ and $\Pi$ contains at least two internal $k$-faces where $k \in \left\{5,6\right\}$. As $G$ is outerplanar, any
    two internal $k$-faces can share at most one edge. Hence, each face is an induced cycle and if $u$ is a vertex with neighbours on a face $f$, then $u$ has at most two neighbours on $f$, both of whom must be adjacent.
    This implies that the cop player cannot dominate both $k$-faces at the same time. The robber begins on the $k$-cycle which is not fully dominated, at distance at least two from both cops. If a cop enters the robber's neighbourhood without backup, then the robber attacks the cop. As the domination number of the graph is greater than $1$, the remaining cop cannot then catch the robber. So if the cops want to force the robber to move, they must provide each other with backup. But the robber is on a cycle of length $5$ to more, so if the cops move in tandem around the face to force the robber to move, the robber can move to remain outside of their neighbourhood without leaving the cycle. This returns the game to the same state we began. This contradicts $\attCop(G) =2$. 
    \item Suppose $\attCop(G)=2$ and there exists an internal $t$-face, where $t \geq 7$. The cop player can place the two cops anywhere on $G$. The robber player places themselves on the internal $t$-face where $t \geq 7$, at distance at least two from both cops. Similarly to the previous case, the cop player cannot dominate this face, and as an induced subgraph, this face corresponds to a cycle of length at least $7$ which has $\attCop(G) = 3$. Since $G$ is outerplanar there does not exist any vertex with more than $2$ neighbours on the $t$-face, hence the robber can remain on this face and evade capture indefinitely. 
\end{enumerate}
This completes the proof.
\end{proof}

To begin studying the attacking cop number of bipartite planar graph, we recall a useful lower bound for attacking cop number from \cite{bonato2014robber}. This result has been modified below to include the condition $\gamma(G) > \delta(G)$ as this is necessary to omit $C_5$ and $C_6$ from consideration. We note that this is itself a generalisation of a bound for cop number from \cite{Aigner1984}.

\begin{theorem}[\cite{bonato2014robber} Theorem~3]\label{Lemma: Girth >= 5 Lower Bound}
    If $G$ is a graph with girth at least $5$ and $\gamma(G) > \delta(G)$, then $\attCop(G) \geq \delta(G)+1$.   
\end{theorem}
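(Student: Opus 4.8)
The plan is to show that $\delta(G)$ cops cannot win, by exhibiting a robber strategy that survives indefinitely. Write $\delta = \delta(G)$. The first step is to handle the opening move: since $\gamma(G) > \delta$, no set of $\delta$ vertices dominates $G$, so wherever the cops initially place themselves, there is a vertex $z$ with $z \notin N[C_i]$ for every cop $C_i$; the robber starts there, so the robber is not captured (and not forced to attack) on the cops' first turn. The core claim, which I would isolate as the inductive invariant, is this: \emph{at the start of each robber turn, the robber occupies a vertex $r$ such that no cop lies in $N[r]$.} I want to show the robber can always restore this invariant after the cops move.

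Suppose the invariant holds at the start of a robber turn and the robber passes (or the robber is deciding what to do). The cops move; I must show that either the invariant is already preserved, or the robber can move to restore it, possibly after attacking a cop. Fix the robber's current vertex $r$. After the cops' move, suppose some cop has entered $N[r]$ — say cops $C_{i_1}, \dots, C_{i_m}$ now occupy vertices in $N(r)$ (none can be at $r$ itself, else that would have been a capture, contradicting that we are at the start of a robber turn with the invariant having held; more carefully, I should argue no cop reaches $r$). The robber has $\deg(r) \ge \delta$ neighbours. Because $G$ has girth at least $5$, the neighbours of $r$ form an independent set, and moreover any vertex $w \ne r$ has at most one neighbour in $N(r)$ (two common neighbours of $w$ and forming part of a path through $r$ would create a $4$-cycle or a triangle). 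Consequently each cop \emph{not} currently in $N[r]$ can threaten at most one vertex of $N(r)$ on its next move, and each cop currently in $N(r)$, say at $u$, can reach only $u$ itself within $N(r)$ (since $N(r)$ is independent, $u$'s only neighbour in $N[r]$ inside $N(r)$ is $u$) — so it threatens only the vertex $u \in N(r)$. The upshot: the set of vertices in $N(r)$ that will lie in some $N[C_j]$ after the cops' \emph{next} move has size at most $\delta$ (one per cop), but $|N(r)| \ge \delta$, and we also get to remove a cop by attacking. I would argue: pick a cop $C_{i_1}$ adjacent to $r$ and have the robber move to that cop's vertex $u_1$, attacking and deleting $C_{i_1}$. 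Now the robber sits at $u_1$; I need that no surviving cop is in $N[u_1]$. A surviving cop in $N[u_1]$ would be either at $u_1$ (impossible — the cop there was just deleted and two cops sharing $u$ would have meant $C_{i_1}$ was a "double" cop, but then after deletion a cop remains at $u_1$; this is the delicate case) or at a neighbour of $u_1$. Since girth $\ge 5$, a neighbour of $u_1 \in N(r)$ other than $r$ is at distance $\ge 2$ from every other vertex of $N(r)$ and in particular I can count how many such cops there can be.

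Here is where the counting has to be done carefully, and I expect this to be the main obstacle: I must ensure that \emph{before} making the attacking move, the robber can choose which neighbour-cop to attack so that the resulting position is safe, accounting for (a) cops not adjacent to $r$ that nonetheless neighbour the chosen $u_1$, and (b) the possibility of two cops stacked on the same neighbour of $r$. The clean way is a pigeonhole argument: let $A \subseteq N(r)$ be the set of neighbours of $r$ occupied by at least one cop, and let $B \subseteq N(r)$ be the set of neighbours of $r$ that, after one cop move, could be dominated by a cop currently \emph{not} in $N[r]$; each such cop contributes at most one vertex to $B$ (girth $\ge 5$), and a cop in $N(r)$ contributes its own vertex, so $|A \cup B| \le (\text{number of cops}) \le \delta \le |N(r)|$. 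If the inequality is strict, there is a vertex $s \in N(r) \setminus (A \cup B)$; the robber moves to $s$, attacking nobody, and the invariant is restored. If $|N(r)| = \delta$ and equality holds everywhere, then every cop is "used up," in particular at least one cop $C_{i_1}$ sits alone on some $u_1 \in N(r)$ with no other cop able to reach $u_1$; the robber moves to $u_1$, deletes $C_{i_1}$, and since girth $\ge 5$ forbids any other cop from being adjacent to $u_1$ except via $r$ (and the only cops near $r$ are the ones we have accounted for, each pinned to its own vertex of $A$, none of which is adjacent to $u_1$ as $A$ is independent), the robber is safe. The remaining subtlety — two cops on one vertex of $N(r)$ — actually \emph{helps}: it means fewer distinct vertices of $N(r)$ are threatened, so the pigeonhole gap only widens, and one deletion still leaves that vertex guarded, but then the robber simply takes a free vertex $s$ instead. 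Finally, since $G$ is finite and the robber never gets captured, the cops cannot win, so $\attCop(G) \ge \delta + 1$; the role of $\gamma(G) > \delta(G)$ is precisely to guarantee the safe start and to rule out small cases such as $C_5$ and $C_6$ where $N(r)$ is too small relative to the initial domination obstruction. $\square$
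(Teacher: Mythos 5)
Your proposal is correct, but note that this paper does not prove the statement at all: it is quoted verbatim (with the added hypothesis $\gamma(G)>\delta(G)$) from Bonato et al., so there is no in-paper proof to compare against; what you have written is a sound reconstruction of the standard girth-at-least-$5$ argument — each cop's closed neighbourhood meets $N(r)$ in at most one vertex, so $\delta$ cops pigeonhole onto $|N(r)|\ge\delta$ neighbours, with the attack resolving the tight case where a lone cop sits on a neighbour that no other cop can reach. One small repair: state the invariant at the \emph{end} of each robber turn (equivalently, just before the cops move), since the cops can always step into $N(r)$ on their turn; your case analysis already establishes exactly that end-of-turn condition, and it also gives the missing justification that no cop can ever reach $r$ itself during the cops' move.
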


Observe that there exists cubic planar graphs with girth $5$ and domination number greater than $4$. For an example consider the dodecahedral graph. Hence, Theorem~\ref{Lemma: Girth >= 5 Lower Bound} implies that there exists planar graphs with attacking cop number at least $4$. The following lemma extends this result  and is similar to the proof that there exists bipartite planar graphs with surrounding cop number $4$ from \cite{bradshaw2019surrounding}.

\begin{lemma}\label{Lemma: Subdivision Lower Bound} If $G$ is a graph with girth at least $5$ and $\gamma(G) > \delta(G)$, then $\attCop(H) \geq \delta(G)+1$ where $H$ is obtained from $G$ by subdividing every edge of $G$ exactly once. 
\end{lemma}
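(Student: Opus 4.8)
The strategy is to show that a winning strategy for $\delta(G)$ cops in the subdivided graph $H$ can be converted into a winning strategy for $\delta(G)$ cops in $G$ itself (in the Cops and Attacking Robbers game), which would contradict Theorem~\ref{Lemma: Girth >= 5 Lower Bound} since $G$ has girth at least $5$ and $\gamma(G) > \delta(G)$. Write $\delta = \delta(G)$. Let $V(G)$ be the ``original'' vertices of $H$ and let each edge $e = xy$ of $G$ correspond to a subdivision vertex $m_e$ in $H$ adjacent exactly to $x$ and $y$. The key structural observation is that $H$ is bipartite with parts $V(G)$ and $\{m_e : e \in E(G)\}$, and that $H$ has girth at least $10$; in particular $H$ is triangle-free and has no short cycles, which will prevent degenerate interactions.

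\textbf{Simulation from $H$ to $G$.} Suppose for contradiction that $\attCop(H) \le \delta$, and fix a winning cop strategy in $H$ using cops $C_1, \dots, C_\delta$. We play a game in $G$ with cops $C_1', \dots, C_\delta'$ against an optimal robber $R'$. First I would set up the correspondence between positions: a cop on an original vertex $x$ in $H$ corresponds to a cop on $x$ in $G$; we will maintain the invariant that whenever it is the robber's turn to move in $G$, every cop $C_i'$ sits on an original vertex, and we record (from the simulated $H$-game) whether $C_i$ is currently on that original vertex or on an incident subdivision vertex. The robber $R'$ on a vertex $z \in V(G)$ is mirrored by the robber $R$ on $z \in V(H)$. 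When $R'$ moves along an edge $e = zw$ of $G$ (or passes), we have $R$ in $H$ take \emph{two} steps: from $z$ to $m_e$ to $w$ (or two loop-steps if $R'$ passed); correspondingly the cops in $H$ get two rounds of their winning strategy to respond, and we compress those two cop-rounds into one cop-round in $G$. A cop $C_i$ that over its two $H$-moves goes original $\to$ subdivision $\to$ original (i.e.\ $x \to m_{xy} \to y$) has $C_i'$ traverse the edge $xy$; one that goes $x \to m_{xy} \to x$ has $C_i'$ pass; one sitting still has $C_i'$ sit still. Because $H$ has no short cycles, a cop cannot over two steps reach an original vertex other than one at $G$-distance $\le 1$, so this is always a legal $G$-move.

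\textbf{Handling attacks and capture.} The delicate points, which I expect to be the main obstacle, are (a) ensuring the attack mechanic transfers correctly and (b) handling the ``half-step'' positions, i.e.\ when the $H$-robber is caught while sitting on a subdivision vertex, or when a capture in $H$ happens at the midpoint of a simulated $G$-round. For (a): if $R'$ attacks $C_i'$ by moving onto an original vertex $x$ occupied by $C_i'$, then in $H$ the robber $R$ moving $z \to m_e \to x$ arrives on $x$; if $C_i$ is on $x$ at that moment, $R$ attacks $C_i$ in $H$ and we remove $C_i'$ in $G$ — and crucially, if $R'$ could attack $C_i'$ then $C_i$ really is on $x$ in the simulated game (since we kept $C_i$ on an original vertex exactly when $C_i'$ was), so attacks correspond faithfully in both directions. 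For (b): if at some point the $H$-cops are poised to capture $R$ on its next $H$-move no matter what $R$ does, I would argue this forces, back in $G$, a configuration from which the $G$-cops win — specifically, if $R$ on original vertex $z$ is doomed in $H$, then $R'$ on $z$ is doomed in $G$; and if $R$ ever lands on a subdivision vertex $m_{zw}$ with a cop about to capture it, one checks (using girth, so that $z$ and $w$ have no common neighbour besides via $m_{zw}$) that this corresponds to $R'$ being confined to $\{z, w\}$ with a cop on $z$ or $w$ plus a backup, hence captured in $G$. Since the $H$-strategy wins in finitely many moves, the simulated $G$-play reaches capture (or robber-has-no-safe-move) in finitely many rounds, giving $\attCop(G) \le \delta$, contradicting Theorem~\ref{Lemma: Girth >= 5 Lower Bound}. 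Therefore $\attCop(H) \ge \delta(G) + 1$.
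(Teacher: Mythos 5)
Your reduction has a genuine gap at exactly the point you flag as ``the delicate point'': the correspondence between attacks/captures in $H$ and in $G$ does not hold with the bookkeeping you set up. Your invariant explicitly allows a virtual cop $C_i$ to sit on a subdivision vertex $m_{xy}$ while the real cop $C_i'$ is parked on an incident original vertex; but your faithfulness argument for attacks then asserts that ``we kept $C_i$ on an original vertex exactly when $C_i'$ was,'' which contradicts that invariant. In the mismatched configuration the real robber can move onto $C_i'$'s original vertex and destroy a real cop while the mirrored robber in $H$ (which is passing through a subdivision vertex) attacks nobody, so a cop disappears from the $G$-game with no corresponding event in the simulated $H$-game, and the winning $H$-strategy you are following no longer certifies anything. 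The symmetric failure is your case (b): if the virtual robber is captured on $m_{zw}$ in mid-move --- say by the cop anchored at the destination $w$ stepping onto $m_{zw}$ --- then in $G$ the robber has just moved onto $w$ and \emph{attacked} that cop. Your resolution claims the robber is then facing ``a cop on $z$ or $w$ plus a backup, hence captured,'' but nothing in the construction supplies a backup: there are only $\delta(G)<\gamma(G)$ cops in play, and the scarcity of backup cops is precisely what the lower bound exploits. After such an event the virtual game has ended (virtual capture) while the real game continues with one fewer cop and no strategy left to simulate, so the claimed contradiction with Theorem~\ref{Lemma: Girth >= 5 Lower Bound} is not reached. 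Note also that it is not clear the implication $\attCop(H)\le\delta(G)\Rightarrow\attCop(G)\le\delta(G)$ is true at all; the attack mechanic is exactly what makes a round-compression simulation between $G$ and its subdivision break down.

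For contrast, the paper does not reduce to $G$: it gives a direct robber strategy on $H$. Since the cops number $k\le\delta(G)<\gamma(G)$, the robber can start on a core vertex not dominated by the cops, and the invariant maintained is that whenever a cop steps onto a subdivision vertex next to the robber, either no second cop is within distance two of the far core endpoint --- in which case the robber attacks that cop and relocates to the far endpoint --- or two cops are tied up near that one branch, and because $H$ has girth at least $10$ the distance-two neighbourhoods of distinct branches around the robber are disjoint, so with only $k\le\delta(G)\le\deg$ cops some other branch is completely unguarded within distance two and the robber escapes through it. That pigeonhole-plus-attack argument on $H$ itself is what your proposal would need to replace the unsupported ``backup cop'' step.
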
 

\begin{proof} Label the vertices of $G$ as $u_1,u_2,\dots,u_n$ where $n=|V(G)|$.  Let $v_{i,j}$ be the vertex in $H$, created by subdividing edge $(u_i,u_j)$ of $G$. Note that the order of paired indices is unimportant: $v_{i,j}=v_{j,i}$. Label the remaining vertices of $H$ as $v_1,v_2,\dots,v_n$ where $v_i$ in $H$ corresponds to $u_i$ in $G$ for each $i$. We note that as the girth of $G$ is at least $5$, the girth of $H$ is at least $10$.

Assume $\attCop(H)=k \leq \delta(G)$; otherwise, $\attCop(H) \geq \delta(G)+1$ and the result holds.  We further assume $k < \gamma(G)$; otherwise $\attCop(H) \geq \gamma(G) > \delta(G)$ by Theorem~\ref{Lemma: Girth >= 5 Lower Bound} and the result holds. An implication of $\attCop(H)=k \leq \delta(G)$ is that $cc(H)<cc(G)$ by Theorem~\ref{Lemma: Girth >= 5 Lower Bound}.

Let $S = \{v_1^*,v_2^*,\dots,v_k^*\}$ be the set of vertices initially occupied by the cops in $H$ where $v_i^*$ corresponds to either a vertex in $G$ or to a subdivided edge of $G$. We will show that on $H$, the robber can avoid capture indefinitely, which will contradict the assumption that $\attCop(H) = k<\attCop(G)$.  
We next explain why the robber can avoid initially occupying a vertex in $H$ that corresponds to a subdivided edge of $G$, and also avoid being adjacent to a cop.  For each vertex $v_j^*$ in $S$, there exists $v_j \in V(H)$ such that $v_j^* \in N_H[v_j]$. Since $k<\gamma(G)$, there is a vertex $u_i \in V(G)$ adjacent to no vertex in $\{u_1,u_2,\dots,u_k\}$ in $G$.  In $H$, the robber initially occupies $v_i$. Note that $v_i$ is not adjacent to any vertex in $\{v_1^*,v_2^*,\dots,v_k^*\}$ in $H$, given $u_i$ is not adjacent to any vertex $\{u_1,u_2,\dots,u_k\}$ in $G$, and so the robber is not adjacent to a cop.

We refer to vertices $v_1,v_2,\dots,v_n$ in $H$ as {\it core} vertices. Observe that the robber initially occupies a core vertex of $H$.  We will show that there is a robber strategy for which the following property holds throughout the game: \smallskip

($\star$) once a cop moves to be adjacent to the robber, the robber can, after two turns, occupy a core vertex that is not adjacent to any cop.\smallskip

\noindent
This implies the robber wins.

As the robber occupies $v_i$, let $\deg_H(v_i)=d$ and $N_H(v_i) = \{v_{i,{m_1}},v_{i,{m_2}},\dots,v_{i,{m_d}}\}$ and suppose the robber remains on vertex $v_i$ until a cop enters $N_H(v_i)$.  If for all $t$, no cop enters $N_H(v_i)$ after $t$ cop rounds, then the robber is never captured and the robber wins. Consequently, suppose there exists a $t$ such that a cop will enter $N_H(v_i)$ after $t$ cop rounds where $t$ is the least integer satisfying this property. Without loss of generality, during round $t$ a cop $C$ occupies vertex $v_{i,{m_1}}$. 

After the cops' turn during round $t$, if there is no cop other than $C$ within distance two of $v_{m_1}$, then the robber moves to $v_{i,m_1}$, attacks the cop $C$ during round $t$, and moves to $v_{m_1}$ during round $t+1$. By assumption, no cop can be adjacent to core vertex $v_{m_1}$ at the end of round $t+1$ and ($\star$) holds. 

Suppose that after the cops have moved during round $t$, there is a cop $C'$ distinct from $C$ within distance two of $v_{m_1}$. Since $H$ has girth at least $10$, $C'$ cannot be within distance two of any of $v_{m_2},v_{m_3},\dots,v_{m_d}$. Moreover, for all $r\neq s$, $\dist(v_{m_r},v_{m_s}) \geq 6$ in $H - v_i$. Hence, $v_i$ is the unique vertex of $H$ which is within distance at most two from both $v_{m_r}$ and $v_{m_s}$ for any $r\neq s$. So any cop within distance two of $v_{m_r}$ is not within distance two of $v_{m_s}$ for any $r\neq s$, given the robber is on vertex $v_i$.
Recall that $\attCop(H) \leq \delta(G)$ and that there are two cops within distance two of $v_{m_1}$.  
This implies $k \leq d$ and so there must be a vertex $v_{m_\ell}$ (for some $\ell \in \{2,3,\dots,d\}$) such that there is no cop within distance two of $v_{m_\ell}$ after the cops move during round $t$. In this case, the robber moves to $v_{i,{m_\ell}}$ during round $t$, before moving to $v_{m_\ell}$ during round $t+1$, and there is no cop in $N[v_{m_\ell}]$ at the end of round $t+1$. In this case, ($\star$) holds.  

Repeating the above arguments provides a robber strategy which ensures ($\star$) always holds, which contradicts $\attCop(H)<\attCop(G)$ and therefore also contradicts $\attCop(H) \leq \delta(G)$. 
\end{proof}

Observe that there are planar graphs with minimum degree $3$ and girth at least $5$; for example the dodecahedral graph. Thus, Lemma~\ref{Lemma: Subdivision Lower Bound} implies that the graph formed by subdividing every edge exactly once of a minimum degree $3$ and girth $5$ planar graph has attacking cop number at least $4$. Also, notice that any graph $H$ obtained by subdividing every edge of a graph $G$ is bipartite. See Figure~\ref{fig: Dodecahedron Sub} for a picture of the dodecahedral graph with each edge subdivided.

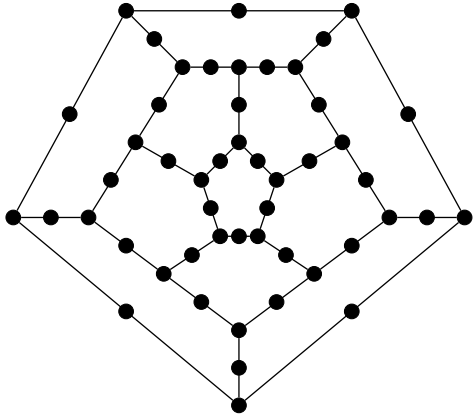
\begin{figure}[htb]
\begin{center}
    \scalebox{0.5}{
        \begin{tikzpicture}[node distance={15mm}, thick, main/.style = {draw, circle,}] 

\node[main][fill= black] (0) at (0.25,4) {};
    \node[main][fill= black] (0,1) at (-0.405,3) {};
\node[main][fill= black] (1) at (-1,2) {};
    \node[main][fill= black] (1,2) at (0,1.25) {};
\node[main][fill= black] (2) at (1,0.5) {}; 
    \node[main][fill= black] (2,3) at (2,-0.25) {};
\node[main][fill= black] (3) at (3,-1) {};
    \node[main][fill= black] (3,4) at (4,-0.25) {};
\node[main][fill= black] (4) at (5,0.5) {};
    \node[main][fill= black] (4,5) at (6,1.25) {};
\node[main][fill= black] (5) at (7,2) {};
    \node[main][fill= black] (5,6) at (6.375,3) {};
\node[main][fill= black] (6) at (5.75,4) {};
    \node[main][fill= black] (6,7) at (5.125,5) {};
\node[main][fill= black] (7) at (4.5,6) {};
    \node[main][fill= black] (7,8) at (3.75,6) {};
\node[main][fill= black] (8) at (3,6) {}; 
    \node[main][fill= black] (8,9) at (2.25,6) {};
\node[main][fill= black] (9) at (1.5,6) {};
    \node[main][fill= black] (9,0) at (0.875,5) {};

\node[main][fill= black] (10) at (2,3) {}; 
    \node[main][fill= black] (10,11) at (2.25,2.25) {};
    \node[main][fill= black] (10,0) at (1.125,3.5) {};
\node[main][fill= black] (11) at (2.5,1.5) {};
    \node[main][fill= black] (11,12) at (3,1.5) {};
    \node[main][fill= black] (11,2) at (1.75,1) {};
\node[main][fill= black] (12) at (3.5,1.5) {};
    \node[main][fill= black] (12,13) at (3.75,2.25) {};
    \node[main][fill= black] (12,4) at (4.25,1) {};
\node[main][fill= black] (13) at (4,3) {}; 
    \node[main][fill= black] (13,14) at (3.5,3.5) {};
    \node[main][fill= black] (13,6) at (4.875,3.5) {};
\node[main][fill= black] (14) at (3,4) {}; 
    \node[main][fill= black] (14,10) at (2.5,3.5) {};
    \node[main][fill= black] (14,8) at (3,5) {};

\node[main][fill= black] (15) at (-3,2) {}; 
    \node[main][fill= black] (15,16) at (0,-0.5) {};
    \node[main][fill= black] (15,1) at (-2,2) {};
\node[main][fill= black] (16) at (3,-3) {}; 
    \node[main][fill= black] (16,17) at (6,-0.5) {};
    \node[main][fill= black] (16,3) at (3,-2) {};
\node[main][fill= black] (17) at (9,2) {};
    \node[main][fill= black] (17,18) at (7.5,4.75) {};
    \node[main][fill= black] (17,5) at (8,2) {};
\node[main][fill= black] (18) at (6,7.5) {}; 
    \node[main][fill= black] (18,19) at (3,7.5) {};
    \node[main][fill= black] (18,7) at (5.25,6.75) {};
\node[main][fill= black] (19) at (0,7.5) {}; 
    \node[main][fill= black] (19,15) at (-1.5,4.75) {};
    \node[main][fill= black] (19,9) at (0.75,6.75) {};

\draw [line width=1.pt] (0) -- (1) -- (2) --  (3) -- (4) -- (5) -- (6) -- (7) -- (8) -- (9) -- (0);
\draw [line width=1.pt] (15) -- (16) -- (17) --  (18) -- (19) -- (15);
\draw [line width=1.pt] (10) -- (11) -- (12) --  (13) -- (14) -- (10);
\draw [line width=1.pt] (15) -- (1);
\draw [line width=1.pt] (16) -- (3);
\draw [line width=1.pt] (17) -- (5);
\draw [line width=1.pt] (18) -- (7);
\draw [line width=1.pt] (19) -- (9);
\draw [line width=1.pt] (2) -- (11);
\draw [line width=1.pt] (4) -- (12);
\draw [line width=1.pt] (6) -- (13);
\draw [line width=1.pt] (8) -- (14);
\draw [line width=1.pt] (0) -- (10);

\end{tikzpicture}
    }
\end{center}
\caption{The dodecahedral graph subdivided exactly once on every edge.}
\label{fig: Dodecahedron Sub}
\end{figure}

With a lower bound on the attacking cop number in hand, we proceed to obtain an upper bound. While Lemma~\ref{Lemma: Subdivision Lower Bound} generalised a robber strategy from Cops and Robbers, Lemma~\ref{Lemma: bipartite geodesic path} generalises a cops strategy from Cops and Robbers. In particular, we generalise the fact that geodesic paths are $1$-guardable, which was proven in \cite{Aigner1984}. 

Consider the game of Cops and Attacking Robbers played on a graph $G$. Let $H \subseteq G$ be a subgraph of $G$. We define $H$ to be \textit{$(k,t)$-guardable} if, in finitely many steps, $k+t$ cops
can move so that $k$ cops are placed on vertices of $H$, so that if the robber ever moves into $H$, the cops will immediately capture the robber. Note that the role of the $t$ cops is to get the $k$ cops safely positioned on $H$. Thereafter, the $t$ cops are free to move elsewhere in $G$, while the $k$ cops guard $H$. Observe that if $H$ is $(k,0)$-guardable in the classical game of Cops and Robbers, then $H$ is $(2k,0)$-guardable in the game of Cops and Attacking Robbers.  Note that there are instances where an extra $t$ cops are needed in an initial phase, until the other $k$ cops are appropriately positioned to guard a subgraph.  

Next, we will see that if $G$ is a finite bipartite graph and $P$ is a geodesic path in $G$, then $P$ is $(1,1)$-guardable. Notably, the fact that the graph $G$ is bipartite cannot be relaxed in Lemma~\ref{Lemma: bipartite geodesic path}, as there are geodesic paths which are not $(1,1)$-guardable in graphs which contain an odd cycle, as demonstrated in \cite{bonato2014robber}.

\begin{lemma}\label{Lemma: bipartite geodesic path}
If $G=(V,E)$ is a bipartite graph and $P$ is a geodesic path in $G$, then $P$ is $(1,1)$-guardable.
\end{lemma}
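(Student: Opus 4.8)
The plan is to adapt the classical proof of Aigner and Fromme~\cite{Aigner1984} that an isometric path is $1$-guardable in Cops and Robbers. Two new difficulties arise. First, in the attacking game the robber can destroy a lone guarding cop by stepping onto its vertex; overcoming this is exactly where the hypothesis that $G$ is bipartite is used. Second, a single cop may need help to reach its guarding position in the first place; this is the role of the second (``free'', $t=1$) cop, which acts as a backup cop during a finite setup phase and then departs.

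Write $P = u_0 u_1 \cdots u_k$. Since $P$ is a shortest $u_0$--$u_k$ path it is isometric: $\dist(u_s,u_t)=|s-t|$ for all $s,t$. As $G$ is bipartite, $v \mapsto \dist(u_0,v) \bmod 2$ is a proper $2$-colouring of $G$, and $u_i$ receives colour $i \bmod 2$. For a robber vertex $v$ put $a(v)=\dist(u_0,v)$ and define its \emph{shadow}
\[
  \sigma(v) \;=\; \max\bigl(0,\ k-|a(v)-k|\bigr)\ \in\ \{0,1,\dots,k\}.
\]
I would have the guarding cop $C_1$ maintain the invariant that, after each of its turns, it stands on $u_{\sigma(v)}$, where $v$ is the current robber vertex. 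The argument rests on three facts about $\sigma$, all read off from isometry of $P$ and properness of the colouring:
\begin{enumerate}
  \item[(a)] if $v\notin V(P)$ then $\dist(v,u_{\sigma(v)})\ge 2$ --- for $a(v)\le k+1$ this is forced by parity (the colour of $u_{\sigma(v)}$ equals that of $v$, so the distance is even and nonzero), and for $a(v)\ge k+2$ it is forced by $\dist(v,u_{\sigma(v)})\ge|a(v)-\sigma(v)|$;
  \item[(b)] if $v\sim v'$ then $|\sigma(v)-\sigma(v')|\le 1$, since $a$ is $1$-Lipschitz along edges and $\sigma$ is a $1$-Lipschitz function of $a$;
  \item[(c)] if $v\sim u_i$ then $a(v)=i\pm 1$ (triangle inequality and bipartiteness) and $a(v)\le k+1$, and a short case analysis then gives $|\sigma(v)-i|=1$.
\end{enumerate}

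Granting these, the maintenance phase runs as follows. By (b), each robber move (onto a neighbour, or a pass) lets $C_1$ restore the invariant in one step. By (a), the robber can never move onto $C_1$'s vertex, so it can never attack $C_1$. And if the robber ever moves onto $P$, say to $u_i$, then just before that move $C_1$ was on $u_{\sigma(v)}$ with $|\sigma(v)-i|=1$ by (c), i.e.\ $u_{\sigma(v)}$ adjacent to $u_i$ on $P$, so $C_1$ captures on its next turn. (Note $\sigma(u_i)=i$, so the invariant is also consistent with --- and forces capture of --- a robber already on $P$.) For the setup phase, $C_1$ and $C_2$ first walk together to $u_0$ (possible as $G$ is connected), after which $C_1$ repeatedly steps along $P$ toward $u_{\sigma(v)}$ while $C_2$ stays within distance $1$ of $C_1$; the position of $C_1$ on $P$ versus the shadow position behaves like a cop pursuing a robber confined to the path $P$, so after finitely many rounds $C_1$ reaches $u_{\sigma(v)}$ and the invariant holds. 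During this phase, if the robber ever attacks $C_1$, the backup cop $C_2$ captures it next turn. Once the invariant holds, $C_2$ is free to leave. This shows $P$ is $(1,1)$-guardable.

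The crux is item (a), and more generally the choice of $\sigma$: the usual ``nearest vertex of $P$'' projection satisfies analogues of (b) and (c) but gives no control on the parity of $\dist(v,\cdot)$, which is precisely the information that stops the robber from walking onto the guard. Replacing the projection by the parity-corrected $\sigma$ above and then re-checking that (b) and (c) still hold --- which uses both that $P$ is isometric and that $G$ is bipartite --- is the step I expect to require the most care; it is also what makes transparent why bipartiteness cannot be dropped, in line with the remark preceding the statement.
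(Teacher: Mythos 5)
Your proposal is correct, and it reaches the lemma by a genuinely different mechanism than the paper, even though both arguments are organised around the distance levels from one endpoint of $P$. The paper's proof runs a state machine: Claim 1 gets a cop (with backup) into state ($0$) (cop on $v_i$, robber at distance $i$ from $v_0$), and Claim 2 is a lengthy case analysis in which the cop may lag in state ($-1$) or ($+1$) for many rounds while the robber drifts to higher levels, with bipartiteness invoked repeatedly through explicit odd-cycle contradictions assembled from the robber's recent trajectory together with a segment of $P$; beyond the far end of $P$ the paper simply clamps the levels into $D_k$. You instead commit to a single closed-form shadow $\sigma(v)=\max(0,k-|a(v)-k|)$ --- the distance-from-$u_0$ projection \emph{reflected} at the far end --- and the one invariant that the guarding cop sits on $u_{\sigma(v)}$ after each of its moves. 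Bipartiteness then enters only once, through the parity of distances in your facts (a) and (c): the cop on the shadow is always at even distance at least $2$ from an off-path robber, so it can never be attacked, and a robber stepping onto $u_i$ must arrive next to the cop and is captured; fact (b) (Lipschitzness of $\sigma$) gives maintenance, and your setup phase with the second cop acting as backup is essentially the paper's Claim 1. I checked facts (a)--(c); they do hold for any geodesic $P$ with at least one edge, the reflection being exactly what fixes the parity near $u_k$ where the naive clamped projection would let the robber attack the lone cop (the situation the paper instead handles with its lag states). What your route buys is a shorter, more transparent argument that isolates the use of bipartiteness in one place and avoids the delicate bookkeeping of the paper's case (ii); the paper's version, by contrast, does not require writing down an explicit shadow and its case analysis is closer in spirit to the classical Aigner--Fromme presentation. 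One shared caveat: both your argument and the paper's implicitly assume $P$ has at least one edge (for a single-vertex path your fact (a) fails, and indeed a lone cop on one vertex can simply be attacked), which is harmless for the way the lemma is applied.
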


\begin{proof}
Let $G=(V,E)$ be a finite bipartite graph and $P=v_0,\dots, v_k$ a geodesic path in $G$. Label the cops $C$ and $C'$.  For $0 \leq i \leq k-1$, let $D_i$ denote the set of vertices distance $i$ from $v_0$ and let $D_k = \{u \in V: \dist(u,v_0) \geq k\}$. We will assume the robber never moves to a vertex adjacent to a cop; otherwise, the cop will capture the robber during the next round. 

Suppose, after some player has moved, the robber is in $D_i$ and the vertices occupied by the cop and robber are non-adjacent. We define several possible states a cop can be in;
\begin{itemize} \item if a cop occupies $v_i$, then the cop is in state ($0$); 
\item if a cop occupies $v_{i-1}$, then the cop is in state ($-1$);  
\item if a cop occupies $v_{i+1}$, then the cop is in state ($+1$). \end{itemize}

\noindent\underline{Claim 1:} After finitely many rounds, $C$ or $C'$ can achieve state ($0$) with the help of the second cop, or the cops have captured the robber.

{\emph{Proof of Claim 1.}} We assume that $C$ and $C'$ move together until they get to $v_0$ on $P$. After finitely many steps cops $C'$ and $C$ can occupy $v_0$ and $v_1$, respectively, or the cops will have captured the robber. 

Suppose the robber occupies vertex $x$. If $x \in \{v_0,v_1\}$, then the cops have captured the robber. If $x \in D_1- v_1$, then $C$ is in state ($0$) as required. Suppose $x \in D_i$ for $i \geq 2$. During each round, the cops $C$ and $C'$ will move from $v_{j-1}$ to $v_{j}$ until both the robber and $C$ occupy a vertex in $D_{j}$ for some $j \geq 2$.  Since the geodesic path is finite, this situation must occur; when it does, suppose the robber occupies $x_j$ and observe that $C$ occupies $v_j$ and $C'$ occupies $v_{j-1}$.  We consider whose turn it is to move when this situation occurs: 

\begin{enumerate} \item Suppose the robber has just moved to $x_j$.  By assumption $x_j \notin  N(v_j)$, so $C$ is in state ($0$) if $x_j \neq v_j$. If $x_j = v_j$, then the cop $C'$ will capture the robber on their next turn. 

\item Suppose the cops have just moved.  \begin{enumerate}
\item If the robber moves to a vertex $x_{j-1} \in D_{j-1}$, then by assumption, $x_{j-1} \notin N(v_{j-1})$ so $C'$ is in state ($0$), or $C$ will capture the robber if $x_{j-1} = v_{j-1}$.
\item If the robber moves to a vertex $x_j' \in D_j$, then by assumption, $x_j' \notin N(v_j)$, so $C$  is in state ($0$), or $C'$ will capture the robber if $x'_j = v_j$.
\item If the robber moves to a vertex $x_{j+1} \in D_{j+1}$, then the cops move to $v_j,v_{j+1}$ and we are at the beginning of (2) again, except the indices are increased by one.  Since the geodesic path is finite, (2)(c) can only occur finitely many times. \end{enumerate}
\end{enumerate}
This concludes the proof of Claim 1. \hfill $\diamond$ \smallskip

Hence, after finitely many moves $C$ or $C'$ can achieve state ($0$) with the help of the second cop, or the cops will capture the robber.
Once a cop has reached state ($0$), the other cop is no longer needed to guard $P$. Suppose without loss of generality $C$ reaches state ($0$). We next show how, from state ($0$), $C$ can guard $P$.

\smallskip

\noindent\underline{Claim 2:} If a cop $C$ is in state ($0$) during some round, then in all subsequent rounds, $C$ has a strategy to ensure that if the robber moves to $P$, then $C$ will immediately move to capture the robber.

{\emph{Proof of Claim 2.}} For some $i \in [k]$, suppose that $C$ occupies $v_i$ and the robber occupies $x \in D_i$ where $v_i \notin N[x]$. We will provide a strategy for $C$ to guard the path $P$ for the rest of the game. 

If it is the cops' turn, then $C$ remains on its current vertex, and thus, remains in state ($0$). If it is the robber's turn, then we consider the robber's possible moves, from $x$ to $x'$, in three cases: (i) $x' \in D_i$, (ii) $i<k$ and $x' \in D_{i+1}$, or (iii) $x' \in D_{i-1}$.  Since we assumed the robber never moves to a vertex adjacent to the cop, $x'$ is not adjacent to $v_i$, and as $v_i\notin N(x)$, we conclude that $x'\neq v_i$.
 
The remainder of the proof explains how the cop $C$ should respond in each case. Hence, if the cop is in state ($0$), then the robber cannot enter $P$ on their next move without moving adjacent to the cop $C$, thereby losing. It follows that if the robber has a strategy to enter $P$ without being captured, then this strategy will force $C$ to leave state ($0$). Thus, to show $C$ guards $P$ it is necessary and sufficient for us to show that if $C$ is forced to leave state ($0$), either $C$ can continue to guard $P$ from outside state ($0$) or $C$ can return to state ($0$).

\begin{enumerate}
\item[(i)] Suppose $x' \in D_i$.  Since $x' \notin N(v_i)$ and $x' \in D_i$, the cop $C$ remains at $v_i$ and is in state ($0$).
\item[(ii)] Suppose $i<k$ and $x' \in D_{i+1}$.  If $x' \notin N(v_{i+1})$, the cop moves to $v_{i+1}$ and is in state ($0$).  So we assume the edge $(x',v_{i+1})$ exists. After the robber's turn, the cop remains on $v_{i}$ implying that the cop is now in state ($-1$). We need to show that the cop can return to state ($0$), or the cop can guard the path $P$ while remaining in state $(-1)$ indefinitely. Note that $(x',v_{i}) \notin E$, since $x'$, $v_{i}$, and $v_{i+1}$ would form an odd cycle. See Figure~\ref{fig:caseii} (a) for a visualization. If the robber does not move on their turn, the cop does not move, and has successfully prevented the robber from entering $P$. Suppose then that the robber moves to a new vertex on their turn. From this position, the robber can move to a vertex in $D_i$, $D_{i+1}$, or $D_{i+2}$; the latter only if $i+2 \leq k$. 

First, if the robber moves to a vertex in $D_i \backslash N(v_i)$ then the cop remains at $v_i$ and is in state ($0$).

Second, if the robber moves to a vertex $x_{i+1} \in D_{i+1} \backslash (N(v_i)\cup \{x'\})$, then we note $(x_{i+1},v_{i+1}) \notin E$; otherwise $\{x',x_{i+1},v_{i+1}\}$ form a $3$-cycle. Thus, the cop moves to $v_{i+1}$ and is now in state ($0$).

Third, if $i+2 \leq k$, then suppose $R$ moves to $x_{i+2} \in D_{i+2}$. Observe that $x_{i+2} \neq v_{i+2}$; otherwise $\{x',v_{i+1},v_{i+2}\}$ forms a $3$-cycle.  Notably, $(v_{i+1},x_{i+2}) \not\in E$; otherwise $\{v_{i+1},x',x_{i+2}\}$ forms a $3$-cycle. This implies that if the robber moves to $x_{i+2}$, then the cop can safely move to $v_{i+1}$ and is now again in state $(-1)$, with an increased index on the vertices of $P$. Thus, this whole case analysis restarts from a higher index.

Notice that if at this higher index the robber decreases their index on their turn, i.e move from $D_j$ to $D_{j-1}$, then by the cop $C$ in state ($-1$) can either capture the robber, or remain on their current vertex on their turn, thereby entering state ($0$). Note that as the cop is in state ($-1$), the robber cannot attack the cop. As returning to state ($0$) benefits the cop's strategy to protect the path, we suppose that the robber does not allow to the cop to return to state ($0$).  Hence, we can suppose it is the robber's turn, and that the robber currently occupies a vertex $x_{l}\in D_{i+l}$ for some $t\geq 2$, while the cop is in state ($-1$). Without loss of generality suppose that the robber has not entered $P$ before the current turn. We will show that either the robber increases their index again, or the cop is able to enter state ($0$), or the robber enters $P$ and is captured by the cop.

There are two cases to consider. First, the robber moves from $D_{i+l}$ to a vertex in $D_{i+l}$, second the robber moves from a vertex $D_{i+l}$ to a vertex in $D_{i+l+1}$. Of course for the second option to be possible, $i+l+1 \leq k$.

We being by considering the case where the robber moves from a vertex $x_{i+l}\in D_{i+l}$ to a vertex in $x'_{i+l}\in D_{i+l}$. Without loss of generality we suppose that this is the first instance of the robber not increasing the index of the set $D_j$ they occupy on their turn. By this assumption we can suppose that on the previous $t$ turns, the robber occupies a vertex $x_{i+r} \in D_{i+r}$. Notice here that $x' = x_{i+1}$.

Recall that by assumption $(x',v_{i+1})\in E$.
Further we can suppose that $x_{i+l} \neq x'_{i+l}$, because if the robber does not move, then the cop can pass on their turn without allowing the robber to make any progress towards entering the path $P$.
Then, $x_{i+1},\dots, x_{i+l}, x'_{i+l}$ is a path of length $l+1$. Hence, if $(x'_{i+l}, v_{i+l})\in E$, then 
$$
x_{i+1},\dots, x_{i+l}, x'_{i+l}, v_{i+l},  v_{i+l-1}, \dots, v_{i+1}
$$
is a cycle of length $2t+1$. But $G$ is bipartite, so $G$ contains no odd cycle, hence, $(x'_{i+l}, v_{i+l})\not\in E$. Given $(x'_{i+l}, v_{i+l})\not\in E$, the cop $C$ can safely move from $v_{i+l-1}$ to $v_{i+l}$ on their turn, thereby entering state ($0$). We conclude that if the robber does not increase their index on every turn, then the cop can enter state ($0$) thereby guarding the path $P$. Suppose then that on every turn since moving to $x'$, the robber has increased their index.

Suppose then that the robber moves from a vertex $x_{i+l} \in D_{i+l}$ to a vertex in $x_{i+l+1} \in D_{i+l+1}$.
We must prove two facts. First, $x_{i+l+1} \neq v_{i+l+1}$, and second that $x_{i+l+1} \not\in N(v_{i+l})$. We must show the first fact in order to ensure that the cop $C$ prevents the robber from entering $P$, and we must prove the second fact to ensure that the cop can move from $v_{i+l-1}$ to $v_{i+l}$ on their turn to remain in state ($-1$).
Notice that the second fact implies the first. Hence, we will prove both facts by proving the second.

Suppose that $x_{i+l+1} \in N(v_{i+l})$, then 
$$
x_{i+1},\dots, x_{i+l}, x_{i+l+1}, v_{i+l},  v_{i+l-1}, \dots, v_{i+1}
$$
is a cycle of length $2l+1$. But as $G$ is bipartite, $G$ contains no odd cycle.
Hence,  $x_{i+l+1} \not\in N(v_{i+l})$. Thus, the robber cannot enter $P$ on their turn, and if the robber increases their index, the cop can respond by returning to state ($-1$). This concludes the proof of case (ii).

\item[(iii)] Similar to (ii) with state ($-1$) being replaced with state ($+1$).
\end{enumerate}
This concludes the proof of Claim 2. \hfill $\diamond$ 

Together, Claims 1 and 2 showed that if $G$ is a finite bipartite graph and $P$ is a geodesic path in $G$, then $P$ is $(1,1)$-guardable.
\end{proof}

\begin{figure}[htbp]
\begin{center}
    \scalebox{0.75}{
        \begin{tikzpicture}[node distance={15mm}, thick, main/.style = {draw, circle}] 

\node[main][fill= black, label=below: $v_0$] (v_0) at (0,1) {}; 
\node[main][fill= black,  label=below: $v_1$] (v_1) at (2,1) {}; 
\node[main][fill= black,  label=below: $v_{i-1}$] (v_{i-1}) at (5,1) {};
\node[main][fill= black,  label=below: $v_i$] (v_i) at (7,1) {}; 
\node[main][fill= black,  label=below: $v_{i+1}$] (v_{i+1}) at (9,1) {};
\node[main][fill= black,  label=above: $x$] (x) at (7,2) {}; 
\node[main][fill= black,  label=above: $x'$] (x') at (9,2) {};

\node[main][fill= black] (v_{i+2}) at (11,1) {};
\node[main][fill= black, label=below: $v_k$] (v_k) at (15,1) {};

\draw [line width=1.pt] (v_0) -- (v_1);
\draw [line width=1.pt] (v_{i-1}) -- (v_i) -- (v_{i+1}) -- (v_{i+2}) ;

\draw [style = dashed, line width=1.pt] (v_i) -- (x);
\draw [line width=1.pt] (2,1) -- (3,1);
\draw [line width=1.pt] (4,1) -- (5,1);
\draw [line width=1.pt] (14,1) -- (15,1);
\draw [line width=1.pt] (11,1) -- (12,1);

\draw [line width=1.pt] (x) -- (x');
\draw [style = dashed, line width=1.pt] (v_i) -- (x');
    \path (v_1) -- node[auto=false]{\ldots} (v_{i-1});
    \path (v_{i+2}) -- node[auto=false]{\ldots} (v_k);

\node at (2,0) [draw,name=D,rectangle, minimum width=1cm,minimum height=3cm,anchor=south,label=below:$D_1$] {};
\node at (5,0) [draw,name=D,rectangle, minimum width=1cm,minimum height=3cm,anchor=south,label=below:$D_{i-1}$] {};
\node at (7,0) [draw,name=D,rectangle, minimum width=1cm,minimum height=3cm,anchor=south,label=below:$D_{i}$] {};
\node at (9,0) [draw,name=D,rectangle, minimum width=1cm,minimum height=3cm,anchor=south,label=below:$D_{i+1}$] {};
\node at (15,0) [draw,name=D,rectangle, minimum width=1cm,minimum height=3cm,anchor=south,label=below:$D_{k}$] {};

\node[main][fill= black, label=below: $v_0$] (u_0) at (0,-5) {}; 
\node[main][fill= black,  label=below: $v_1$] (u_1) at (2,-5) {}; 
\node[main][fill= black,  label=below: $v_{i-1}$] (u_{i-1}) at (5,-5) {};
\node[main][fill= black,  label=below: $v_i$] (u_i) at (7,-5) {}; 
\node[main][fill= black,  label=below: $v_{i+1}$] (u_{i+1}) at (9,-5) {};
\node[main][fill= black,  label=below: $v_{i+l-1}$] (u_{i+l-1}) at (13,-5) {};
\node[main][fill= black,  label=below: $v_{i+l}$] (u_{i+l}) at (15,-5) {}; 
\node[main][fill= black,  label=below: $v_{i+l+1}$] (u_{i+l+1}) at (17,-5) {};
\node[main][fill= black,  label=below: $v_{i+l-1}$] (u_{i+l-1}) at (13,-5) {};
\node[main][fill= black,  label=above: $x_{i+l}$] (x_{i+l}) at (15,-4) {}; 
\node[main][fill= black] (y) at (13,-4) {}; 
\node[main][fill= black, label = above: $x_i$] (x_i) at (7,-4) {}; 
\node[main][fill= black,  label=above: $x_{i+1}$] (x_{i+1}) at (9,-4) {};

\draw [line width=1.pt] (2,-5) -- (3,-5);
\draw [line width=1.pt] (4,-5) -- (5,-5);
\draw [line width=1.pt] (9,-4) -- (10,-4);
\draw [line width=1.pt] (12,-4) -- (13,-4);
\draw [line width=1.pt] (15,-4) -- (16,-4);
\draw [line width=1.pt] (13,-5) -- (12.5,-5);
\draw [line width=1.pt] (11,-5) -- (11.5,-5);
\draw [line width=1.pt] (17,-5) -- (17.5,-5);
\draw [line width=1.pt] (19,-5) -- (18.5,-5);

\node[main][fill= black] (u_{i+2}) at (11,-5) {};
\node[main][fill= black, label=below: $v_k$] (u_k) at (19,-5) {};

\draw [line width=1.pt] (u_0) -- (u_1);
\draw [line width=1.pt] (u_{i-1}) -- (u_i) -- (u_{i+1}) -- (u_{i+2}) ;
\draw [line width=1.pt] (u_{i+l-1}) -- (u_{i+l}) -- (u_{i+l+1}) ;
\draw [style = dashed, line width=1.pt] (u_i) -- (x_i);
\draw [line width=1.pt] (x_i) -- (x_{i+1});
\draw [line width=1.pt] (u_{i+1}) -- (x_{i+1});
\draw [line width=1.pt] (y) -- (x_{i+l});
\path (u_1) -- node[auto=false]{\ldots} (u_{i-1});
\path (u_{i+2}) -- node[auto=false]{\ldots} (u_k);
\path (u_{i+2}) -- node[auto=false]{\ldots} (u_{i+l-1});
\path (u_{i+l+1}) -- node[auto=false]{\ldots} (u_k);
\path (x_{i+1}) -- node[auto=false]{\ldots} (y);

\node at (2,-6) [draw,name=D,rectangle, minimum width=1cm,minimum height=3cm,anchor=south,label=below:$D_1$] {};
\node at (5,-6) [draw,name=D,rectangle, minimum width=1cm,minimum height=3cm,anchor=south,label=below:$D_{i-1}$] {};
\node at (7,-6) [draw,name=D,rectangle, minimum width=1cm,minimum height=3cm,anchor=south,label=below:$D_{i}$] {};
\node at (9,-6) [draw,name=D,rectangle, minimum width=1cm,minimum height=3cm,anchor=south,label=below:$D_{i+1}$] {};

\node at (13,-6) [draw,name=D,rectangle, minimum width=1cm,minimum height=3cm,anchor=south,label=below:$D_{i+l-1}$] {};
\node at (15,-6) [draw,name=D,rectangle, minimum width=1cm,minimum height=3cm,anchor=south,label=below:$D_{i+l}$] {};
\node at (17,-6) [draw,name=D,rectangle, minimum width=1cm,minimum height=3cm,anchor=south,label=below:$D_{i+l+1}$] {};
\node at (19,-6) [draw,name=D,rectangle, minimum width=1cm,minimum height=3cm,anchor=south,label=below:$D_{k}$] {};
        \end{tikzpicture}
    }
\end{center}

\caption{(top) is visualization of the start of Case (ii); (bottom) is the visualization of the later part of Case (ii).} 

\label{fig:caseii}
\end{figure}
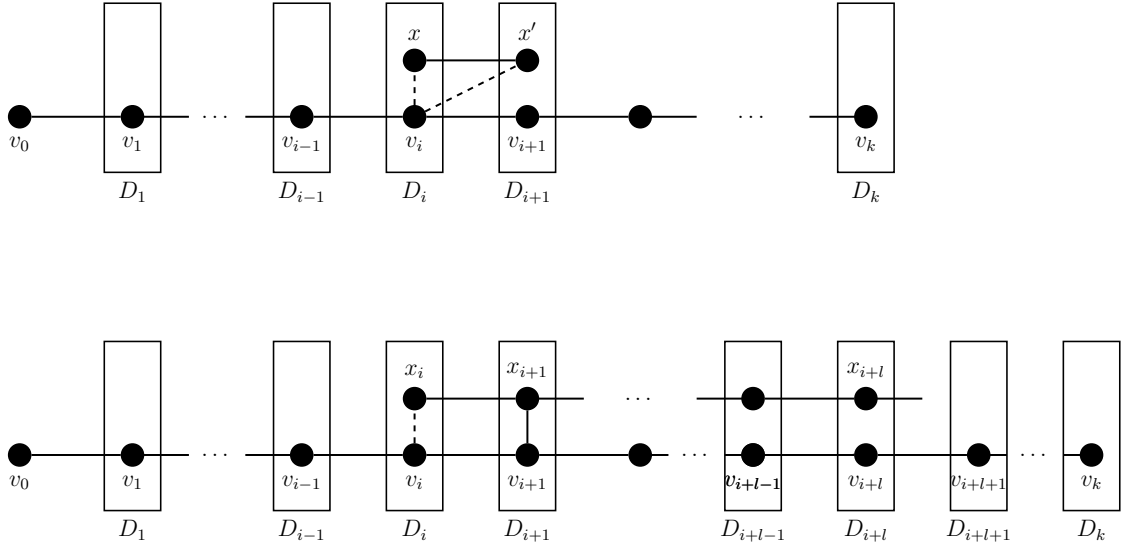

We notice that if the robber is confined to a subgraph $H$ of $G$, and $P$ is a path from $u$ to $v$ such that there is no path shorter than $P$ from $u$ to $v$ in $H$, then the same argument as in Lemma~\ref{Lemma: bipartite geodesic path} implies $P$ is $(1,1)$-guardable. Thus, paths that are not necessarily geodesic in $G$ can become $(1,1)$-guardable if these paths are geodesic in a subgraph $H$ of $G$, where the robber cannot leave $H$ without being captured, and these paths are shortest paths in $H$.

We are now prepared to prove Theorem~\ref{Thm: bipartite planar}. The proof proceeds by almost exactly the same argument as the proof that for all planar graphs $G$, $\cop(G) \leq 3$. For completeness the whole argument is included.

\begin{theorem}\label{Thm: bipartite planar}
    If $G$ is a bipartite planar graph, then $\attCop(G) \leq 4$. Furthermore, there exists a bipartite planar graph $H$ with $\attCop(H)=4$.
\end{theorem}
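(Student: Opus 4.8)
\emph{Upper bound.} The plan is to adapt the Aigner--Fromme proof that $\cop(G)\le 3$ for planar $G$, replacing the classical fact that shortest paths are $1$-guardable with Lemma~\ref{Lemma: bipartite geodesic path} and the observation following it. Fix a planar embedding of $G$. Throughout the game we maintain the Aigner--Fromme invariant: the robber is confined to a connected ``region'' $R$ (an induced subgraph containing the robber) whose topological boundary in the embedding is covered by at most two guarded geodesic paths $P_1,P_2$ sharing endpoints $u,v$, so that $P_1\cup P_2$ bounds a closed disc containing $R$ and every vertex of $R$ with a neighbour outside $R$ lies in $N[P_1]\cup N[P_2]$. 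The crucial accounting point is that, by Claim~2 of Lemma~\ref{Lemma: bipartite geodesic path}, once a single cop sits on a geodesic path it can guard that path forever; hence \emph{holding} the two boundary paths costs only two cops. The remaining two cops are used together, via the $(1,1)$-guardability from Lemma~\ref{Lemma: bipartite geodesic path} --- applied, through the observation after its proof, to a path that is a shortest $u$--$v$ path inside the subgraph induced by $R$ and its boundary, a subgraph the robber cannot leave because the two standing guards never move --- to place one cop on a new geodesic $P_3$ that cuts $R$; the second of these cops is merely the helper needed to install $P_3$.

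\emph{Shrinking the region.} Once $P_3$ is guarded by one cop, $P_3$ together with one of $P_1,P_2$ bounds a strictly smaller sub-disc containing the robber, exactly as in the classical argument; the cop guarding the other old boundary path is then released, so we are back to two standing guards plus two free cops, but with $|V(R)|$ strictly smaller. A short initial phase brings us into such a configuration: gather all cops, have two of them install a first geodesic path $P_1$ (cost $1+1$), note that the robber is trapped in one component $R$ of $G-N[P_1]$, and take the second boundary path to be a trivial (single-vertex) path; the bookkeeping here is identical to Aigner--Fromme. Since $G$ is finite and $|V(R)|$ strictly decreases at each stage, $R$ eventually becomes empty (or a single vertex), i.e.\ the robber is captured, so $\attCop(G)\le 4$.

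\emph{Tightness.} For the second statement, let $G$ be the dodecahedral graph: it is cubic and planar with girth $5$, and $\gamma(G)>\delta(G)=3$, since any set of at most three vertices dominates at most $12<20=|V(G)|$ vertices. Let $H$ be obtained from $G$ by subdividing every edge exactly once. Then $H$ is planar, and $H$ is bipartite because every edge of $H$ joins an original vertex to a subdivision vertex; thus $H$ is a bipartite planar graph (see Figure~\ref{fig: Dodecahedron Sub}). By Lemma~\ref{Lemma: Subdivision Lower Bound}, $\attCop(H)\ge \delta(G)+1=4$, while the first part gives $\attCop(H)\le 4$; hence $\attCop(H)=4$.

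\emph{Main obstacle.} As in the classical proof, the real work is the invariant maintenance: one must verify at every stage that the new path $P_3$ can be chosen to be a shortest path in a region-subgraph the robber genuinely cannot escape (so that Lemma~\ref{Lemma: bipartite geodesic path} applies), that the helper cop's repositioning never opens an escape route (it does not, because both boundary cops remain fixed during that phase), and that the planar ``disc cut by a chord'' picture behaves as in Aigner--Fromme. The one genuinely new feature --- and the reason the bound is $4$ rather than $3$ --- is precisely that in a bipartite graph guarding a geodesic path costs one cop but installing it costs two, which is exactly what the $(1,1)$ in Lemma~\ref{Lemma: bipartite geodesic path} records.
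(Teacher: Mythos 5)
Your overall plan is the same as the paper's: use Lemma~\ref{Lemma: bipartite geodesic path} (with the remark that paths geodesic inside the robber's region are still $(1,1)$-guardable) to run an Aigner--Fromme-style territory-expansion argument with two standing guards plus an installing pair, and get tightness from the once-subdivided dodecahedral graph via Lemma~\ref{Lemma: Subdivision Lower Bound}. The tightness half of your argument is complete and correct (you even supply the domination count the paper only asserts).

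The upper bound, however, has a genuine gap: the invariant you state is not the one that can actually be maintained, and the degenerate cases in which it breaks are exactly where the work lies. You posit that the robber's region is always bounded by two guarded geodesics $P_1,P_2$ with common endpoints $u,v$ and that the new chord $P_3$ is a shortest $u$--$v$ path through the region. In the actual induction the chord must run between two \emph{attachment} vertices $x,y$ of the robber's component $A$ on the current boundary (possibly both on the same path, possibly $x=y$), not between the shared endpoints; after cutting, the new boundary consists of $P_3$ together with \emph{subpaths} of the old boundary, and one must check these pieces are still shortest subject to the robber's confinement so that a single cop can keep guarding each (the paper does this explicitly, and in the one-path case it uses bipartiteness to split the resulting even cycle at its midpoint into two geodesic halves --- a parity step you never invoke). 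Worse, when $A$ attaches to the boundary through a single vertex (or through a single neighbour $y$), there is no chord to install and your ``strictly smaller disc'' step produces no progress; the paper needs a separate cut-vertex state (its state (3), and Claim~4 Case~(a), where the cops first reconfigure without enlarging territory) precisely to handle this. Finally, the claim that after installing $P_3$ you can always release one of the two old guards needs the inside/outside-of-cycle analysis in the plane (the robber may be trapped by $P_3$ together with a subpath of the \emph{same} path that contains both attachment vertices, in which case it is the other path that is released). None of these points is fatal to the approach --- they are exactly what the paper's three-state case analysis supplies --- but as written your proof defers them (``one must verify at every stage\dots''), and with the invariant stated as it is, the induction step cannot be carried out verbatim.
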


\begin{proof}

We begin by pointing out that the Dodecahedral graph, $G$, is $3$-regular, has girth $5$, and has domination number at least $4$. Thus, Lemma~\ref{Lemma: Subdivision Lower Bound} implies that the graph $H$ obtained by subdividing every edge of the $G$, pictured in Figure~\ref{fig: Dodecahedron Sub}, has attacking cop number at least $4$. We note that every $k$-cycle in $H$ corresponds to $k/2$-cycle in $G$. So the length of every cycle in $H$ is even, implying $H$ is bipartite. As subdividing edges does not increase a graph's genus and $G$ is planar, $H$ is planar. Thus, we have demonstrated the existence of a bipartite planar graph $H$ with $\attCop(H)\geq 4$.

We now prove that every bipartite planar graph $G$ has $\attCop(G) \leq 4$. Let $G = (V,E)$ be a fixed finite bipartite planar graph.
To prove an upper bound on the attacking cop number, we will provide a cop strategy for capturing the robber with four cops, regardless of how the robber plays. This cop strategy proceeds by partitioning the time over which the game is played into segments we call stages. The cop territory is a subgraph $H$ of $G$ consisting of all vertices $u \in V$ which the cops can prevent the robber from entering. If the robber tries to enter $H$ they will be immediately captured. In each stage $i\geq 0$, $H_i$ is a strict subgraph of $H_{i+1}$. As $G$ is finite, for some finite number $k$, $H_k=G$, implying the robber is in the cop territory, and as a result the cops will be able to capture the robber.

We prove that $H_{i}$ is a strict subgraph of $H_{i+1}$ by proving how in stage $i+1$, the cops can add a new set of vertices to their territory, while guarding $H_i$. We define the three states that the cop strategy can be in during stage $i$ and then proceed to prove that from all of these states, the cops can expand their territory in stage $i+1$ by reaching one of these three states again, with at least one new vertex in the subgraph $H_{i+1}$. 
Here stages are not a natural aspect of the game, rather they are a means for distinguishing which strategy the cops should be pursuing from a particular position.
As such, a stage begins or ends if and only if we define it to begin or end, respectively.
We suppose without loss of generality that the robber does not enter $H_i$ once stage $j\geq i$ has begun, given that they will be captured if they do so.  Suppose the current stage is $i$. The states we consider are the following:
\begin{enumerate}
    \item     
    A cop is guarding a path $P$ which is at least as short as any path that has the same endpoints as $P$, but whose internal vertices are not in $H_i$. Any path from the robber to the cop territory $H_i$, contains a vertex of $P$, while the rest of the cops are occupying vertices of $P$.
    \item   
    A cop $C_1$ guards a path $P_1$ and a cop $C_2$ guards a path $P_2$ where $P_1$ and $P_2$ are internally disjoint, but have the same endpoints.  Paths $P_1$ and $P_2$ are at least as short as any shortest path between their endpoints, but with internal vertices not in $H_i$. Any path from the robber to the cop territory $H_i$, contains a vertex from $P_1$ or $P_2$, while the rest of the cops are occupying vertices of $P_1$ or $P_2$.
    \item  A cop guards a cut vertex $v$ and any path from the robber to the cop territory, $H_i$, contains $v$, while the rest of the cops are occupying vertices in $H_i$ or are on $v$.
\end{enumerate}

Begin the game and stage $0$ with cops $C_1,C_2,C_3,C_4$ initialized on a fixed but arbitrary vertex. As stage $0$ does not begin in any of the states we have defined, we begin by proving that by the end of stage $0$ the cops can reach state (1).

\vspace{0.5cm}
\noindent\underline{Claim 1:} After being initialized the cops can reach state ($1$).\medskip

{\it Proof of Claim 1.} Let $u,v$ be any pair of vertices in $G$, with $\dist(u,v) = \max_{x,y \in V} \dist(x,y)$. Let $P$ be a shortest path between $u$ and $v$.  Lemma~\ref{Lemma: bipartite geodesic path} implies that $C_1$ can guard $P$ with the help of $C_2$ after some finite number of rounds. When $C_2$ is done assisting $C_1$, $C_2$ is located on $P$ as per our proof of Lemma~\ref{Lemma: bipartite geodesic path}. From here have $C_3$ and $C_4$ move together onto $P$, so that the robber cannot attack either cop without being captured. Then letting $H_0 = P$ the cops have reached state ($1$). Finish stage $0$ here and proceed to stage $1$. This completes the proof of Claim 1.\hfill $\diamond$

\vspace{0.5cm}
\noindent\underline{Claim 2:} If stage $i\geq 0$ ended with the cops in state ($1$), then the cops can extend their territory in stage $i+1$ while also achieving state ($1$), state ($2$), or state ($3$).\medskip

{\it Proof of Claim 2.} As the cops are currently in state ($1$), there is a path $P$ guarded by a cop, say $C_1$, and $C_2,C_3,C_4$ all occupy vertices in $P$. Let $A$ be the component of $G-P$ that the robber is in at the end of stage $i$. Note that as the cops can prevent the robber from crossing $P$, $H_i = G-V(A)$.

\vspace{0.5cm}
\noindent\underline{Case (a):} There exists exactly one vertex $x$ in $V(P)$ such that $N(x) \cap V(A)\neq \emptyset$. \medskip

Let $x\in V(P)$ be such a vertex. If $N(x)\cap V(A) = \{y\}$, then we note that $y$ is a vertex cut. Let $C_2$ move to $y$ with the help of $C_3$. As $y$ is a vertex cut and the robber is in $A$, the cops are now in state ($3$), and $V(H_{i+1}) = V(H_i) \cup  \{y\}$. 
At this point, end stage $i+1$, as we have proved the induction hypothesis in this situation.

 If $|N(x)\cap V(A)| \geq 2$, then let $y,z\in N(x)\cap V(A)$. Notice that as $A$ is a connected component of $G-P$ there is a path from $y$ to $z$ in $A$. Let $y,x_1,\dots, x_{k}, z$ be a shortest path from $y$ to $z$ in $A$. Then, $y,x_1,\dots, x_{k}, z, x$ is a shortest cycle in $G$ containing $y$ and $z$. As $G$ is bipartite, we note that $k$ is odd. Hence, $P_1 = x,y,\dots,x_{\lceil \frac{k}{2} \rceil}$ and $P_2 = x_{\lceil \frac{k}{2} \rceil}, \dots, z, x$ are shortest paths in $G$. Lemma~\ref{Lemma: bipartite geodesic path} implies that $C_2$ can guard $P_2$ with the help of $C_3$. Once, $P_2$ is guarded, the robber cannot enter $x$ without being captured by $C_2$, so $C_1$ is no longer needed to guard $P$, given there is no way for the robber to enter $G-A$ without first entering $x$ in $P_2$. From here Lemma~\ref{Lemma: bipartite geodesic path} implies that $C_1$ can guard $P_1$ with the help of $C_4$. As $x$ is a vertex on this cycle, $P_1 \cup P_2$, $x$ is guarded, and as $x$ is unique, all paths from vertices in $A$ to $H_i$ pass through the paths that are guarded. Hence, $H_{i+1}$ has $H_i$ as a strict subgraph and the cops are in state ($2$), so end stage $i+1$, as we have proved the claim in this situation.

\vspace{0.5cm}
\noindent\underline{Case (b):} There are at least two vertices $x,y$ in $V(P)$ such that $N(x) \cap V(A)\neq \emptyset$ and $N(y) \cap V(A)\neq \emptyset$.\medskip

Order the vertices of $V(P)$ from $v_1,\dots, v_k$ so that $P = v_1,\dots, v_k$. Let $v_i,v_j \in V(P)$ be the unique pair of vertices satisfying that  $N(v_i) \cap V(A)\neq \emptyset$ and $N(v_j) \cap V(A)\neq \emptyset$ and for all $v_r\in V(P)$ satisfying  $N(v_r) \cap V(A)\neq \emptyset$, it must be true that $i \leq r \leq j$. There are two situations to consider. First, if $N(v_i) \cap V(A) = N(v_j) \cap V(A) = \{y\}$. Second, the case where $N(v_i) \cap V(A) = N(v_j) \cap V(A) = \{y\}$ is not true.

If $N(v_i) \cap V(A) = N(v_j) \cap V(A) = \{y\}$, then $y$ is a vertex cut and we can extend the cop territory by moving to state ($3$), as in Case (a), when $N(x)\cap V(A) = \{y\}$. So we suppose that  $N(v_i) \cap V(A) = N(v_j) \cap V(A) \neq \{y\}$ is not true.

Then there exists a vertex $y \in N(v_i)\cap V(A)$ and $z \in N(v_j)\cap V(A)$ such that $y\neq z$.
As $A$ is a connected component there is a path from $y$ to $z$ in $A$. Let $P_1 = y,x_1,\dots,x_k,z$ be a shortest $y,z$ path in $A$.
Lemma~\ref{Lemma: bipartite geodesic path} implies that $C_2$ can guard $P_1$ with the help of $C_3$. Suppose that after $C_2$ is guarding $P_1$, $C_3$ remains on $P_1$. 

If, given a fixed plane embedding of $G$, the robber is on the exterior of the cycle \[y,x_1,\dots,x_k,z,v_j,v_{j-1}\dots,v_i,\] then all paths from the current position of the robber to $H_i$ or the interior of the aforementioned cycle pass through $P_1$ by the planarity of $G$. In this case move cops $C_1,C_4$ to $P_1$, which can be done as $C_1$ no longer needs to guard $P$. Then the cops are in state ($1$) and $H_{i+1}$ contains $H_i$ as a strict subgraph. So end stage $i+1$, as we have proved the claim in this situation. 

If the robber is on the interior of the cycle $y,x_1,\dots,x_k,z,v_j,v_{j-1}\dots,v_i$, in the same plane embedding of $G$, then let $P_2 = y,v_i,\dots,v_j,z$. As $P$ was geodesic one cop, say $C_1$, which was previously guarding $P$, can guard $v_i,\dots, v_j$, while $C_2$ guards $P_1$. Hence, $P_2$ can be guarded by $C_1$, even without any further assistance by another cop. It follows by the planarity of $G$, that the robber cannot leave the interior of the union of $P_1$ and $P_2$. Note $C_3$ is already on a vertex of $P_1$, while $C_4$ is on a vertex of $P$ implying that $C_4$ can move to a vertex of $P_2$ without being attacked by the robber. Hence, the cops can move to state ($2$) and so that $H_{i+1}$ contains $H_i$ as a strict subgraph. So end stage $i+1$, as we have proved the claim in this situation. 

This completes the proof of Claim 2. \hfill $\diamond$

\vspace{0.5cm}
\noindent\underline{Claim 3:} If stage $i\geq 0$ ended with the cops in state ($3$), then the cops can extend their territory in stage $i+1$ while also achieving state ($1$), state ($2$), or state ($3$).
\vspace{0.5cm}

{\it Proof of Claim 3.} As the cops are in state ($3$), a cop, say without loss of generality $C_1$ guards a vertex cut $v\in V$ and all paths from the robber to $H_i$ contain $v$. By assumption all the cops begin on vertices in $H_i$ and therefore all cops can move to $v$ without risk of the robber attacking them. Suppose, without loss of generality, that all cops begin on vertex $v$. Let $A$ denote the component on $G-v$ that contains the robber.

If $N(v) \cap V(A) = \{u\}$, then either $A$ has exactly one vertex $u$, or $u$ is also a cut vertex. If $A$ is a component with a single vertex then the robber occupies $u$ and the cops can capture the robber on their next turn. If the robber attacks the cops on $v$, then there are three other cops in $N[v]$ who will capture the robber on their next cop turn, so the cops will capture the robber, making $H_{i+1} = G$. 

Suppose $A$ contains at least two vertices. Then $u$ is a cut vertex. In this case have all four cops move to $u$. In this case $H_{i+1}$ is $H_i$ with the addition of $u$, implying that $H_{i+1}$ contains $H_i$ as a strict subgraph, and as all cops occupy a cut vertex, we are once again in state ($3$). So if $N(v) \cap V(A) = \{u\}$, then the induction hypothesis is satisfied, so end stage $i+1$, as we have proved the claim in this situation. 

Otherwise, $v$ has at least two neighbours in $A$. Let $u,w$ be distinct vertices in $N(v)\cap V(A)$. As $A$ is a connected component there exists a path  from $u$ to $w$ contained in $A$. Let $P = u,x_1,\dots,x_k,w$ be a shortest path from $u$ to $w$ in $A$.
Then $v,u,x_1,\dots,x_k,w$ is a cycle. As $G$ is bipartite, $k$ is odd. So $P_1 = v,u,x_1,\dots,x_{\lceil \frac{k}{2} \rceil}$ and $P_2 = x_{\lceil \frac{k}{2} \rceil}, x_{\lceil \frac{k}{2} \rceil - 1},\dots,w,v$ are both geodesic paths from $v$ to $x_{\lceil\frac{k}{2} \rceil}$, by our assumption that $P$ is shortest. Thus, $C_2$ can guard $P_1$ with the assistance of $C_3$ by Lemma~\ref{Lemma: bipartite geodesic path}. Once $C_2$ is guarding $P_1$, $C_3$ is on a vertex of $P_1$, then $C_3$ can move back to $v$ without risking being attacked, given $v$ is a vertex of $P_1$ and $P_1$ is guarded. From here $C_3$ can guard $P_2$ with the help of $C_4$. The cops are now in state ($2$) with $H_{i+1}$ equal to $H_i$ with the addition of the union of $P_1$ and $P_2$. Hence, the induction hypothesis is also satisfied if  $|N(v) \cap V(A)| > 1$. So end stage $i+1$, as we have proved the claim in this situation. 

This concludes the proof of Claim 3. \hfill $\diamond$

\vspace{0.5cm}
\noindent\underline{Claim 4:} If stage $i\geq 0$ ended with the cops in state ($2$), then the cops can extend their territory in stage $i+1$ while also achieving state ($1$), state ($2$), or state ($3$).\medskip

{\it Proof of Claim 4.} As the cops are currently in state ($2$) there is a path $P_1$ and a path $P_2$ with the same endpoints, each of which is guarded by a separate cop. Furthermore, all paths from the robber to $H_i$ contain vertices in $P_1$ or $P_2$. Without loss of generality, suppose $C_1$ guards $P_1$ and $C_2$ guards $P_2$ and $C_3$ and $C_4$ both occupy vertices in the union of $P_1$ and $P_2$. Let $A$ be the connected component of $G-P_1\cup P_2$ that the robber occupies.

\vspace{0.5cm}
\noindent\underline{Case (a):} There exists exactly one vertex $x$ in $V(P_1)\cup V(P_2)$ such that $N(x) \cap V(A)\neq \emptyset$. \medskip

Suppose $x$ is the unique vertex in $V(P_1)\cup V(P_2)$ such that $N(x) \cap V(A)\neq \emptyset$. Recall that all paths from $A$ to $H_i$ pass through $P_1$ or $P_2$. Then all paths from $A$ to $H_i$ pass through $x$. It follows that $x$ is a cut vertex. Hence, $C_3$ can move to $x$ without risking being attacked, given $x$ is a vertex in $P_1$ or $P_2$. Once $C_3$ occupies $x$ without being attacked on the first robber turn that $C_3$ occupies $x$, $C_3$ guards $x$ as either the robber is adjacent to $x$, in which case $C_3$ captures the robber, or the robber is not adjacent to $x$, at which point the robber cannot move adjacent to $x$ without being captured by $C_3$. As $x$ is a cut vertex, the cops are now in state ($3$).

At this point we have not shown that $H_{i+1}$ is strictly larger than $H_i$, so we have not proved the desired statement yet. So we do not end stage $i+1$ here. We have shown however that we can reconfigure the cops into state ($3$). Hence, from this point we can prove the desired statement by applying the cop strategy described in Claim 3.
It follows that in this case we have proven the induction statement. 

\vspace{0.5cm}
\noindent\underline{Case (b):} There exists exactly one vertex $x\in V(P_1)$ and exactly one vertex $y\in V(P_2)$ such that $N(x) \cap V(A)\neq \emptyset$ and $N(y) \cap V(A)\neq \emptyset$. \medskip

Suppose that $x\in V(P_1)$ and $y\in V(P_2)$ are the unique vertices in $P_1$ and $P_2$ such that $N(x) \cap V(A)\neq \emptyset$ and $N(y) \cap V(A)\neq \emptyset$. Let $P = x,u,x_1,\dots,x_k,v,y$ be a shortest path from $x$ to $y$ with at least one internal vertex all of whose internal vertices are in $A$. As $A$ is connected, $P$ exists.

Notice as $C_3$ and $C_4$ can move freely around the guarded paths $P_1$ and $P_2$, $C_3$ and $C_4$ can both move to $x$ without risking being attacked by the robber. From this point Lemma~\ref{Lemma: bipartite geodesic path} implies that $C_3$ can guard $P$ with the help of $C_4$. Notice that this is slightly complicated by the fact that $P$ might not be a shortest path from $x$ to $y$ in $G$, but instead is a shortest path from $x$ to $y$ through $A$. This is not a problem however, as the robber is confined to $A$ by our assumption that all paths from the robber to $H_i$ pass through $P_1$ or $P_2$.
Hence, $P$ is in fact $(1,1)$-guardable under these conditions, as the robber can never take advantage of a path that has vertices in $H_i$ from $u$ to $v$, which is potentially shorter than $P$.

As $x$ and $y$ are unique, $\{x,y\}$ is a vertex cut. Hence, once $C_3$ guards $P$ all paths from the robber to vertices in $V(H_i)\cup V(P)$ pass through $P$. It follows that now the cops are in state ($1$). Furthermore, $P$ contains at least one internal vertex, so $H_{i+1}$, defined by $H_i$ with the addition of all the internal vertices of $P$, contains $H_i$ as a strict subgraph. So we end stage $i+1$, as we have proved the claim in this situation.

\vspace{0.5cm}
\noindent\underline{Case (c):} Either $P_1$ or $P_2$ contain distinct vertices $x,y$ such that $N(x)\cap V(A)\neq \emptyset$ and  $N(y)\cap V(A)\neq \emptyset$.

Without loss of generality, suppose $P_1$ contains at least two distinct vertices $x,y$ with neighbours in $A$.
Suppose that $P_1 = v_1,\dots, v_k$, and let $v_i,v_j$ be the vertices in $P_1$ chosen so that $v_i$ and $v_j$ have neighbours in $A$ and for all $v_r$ if $v_r$ has a neighbour in $A$, then $i \leq r\leq j$.

Let $P_3$ be a shortest path from $v_i$ to $v_j$ with at least one internal vertex all of whose internal vertices are in $A$. Notice that $P_3$ exists as $v_i$ and $v_j$ have neighbours in $A$ and $A$ is a connected component.

Letting $P_3 = v_i,x_1,\dots, x_t,v_j$, we define the path $P_4 = v_1,\dots,v_i,x_1,\dots,x_t,v_j,\dots,v_k$. As $P_3$ is a shortest path from $v_i$ to $v_j$ whose internal vertices are in $A$, and $P_1$ is at least as short as any shortest path from $v_1$ to $v_k$ with internal vertices not in $H_i$, it follows that
$P_4$ is a shortest path from $v_1$ to $v_k$ with some vertex in $A$ as an internal vertex.
Given $P_1$ and $P_2$ are guarded by $C_1$ and $C_2$, $P_1$ and $P_2$ are in $H_i$, this implies that $P_4$ can be guarded by $C_3$ with the help of $C_4$, given Lemma~\ref{Lemma: bipartite geodesic path}.

Once $C_3$ is guarding $P_4$, the planarity of $G$ implies that the robber is either on the interior of the cycle $v_i,v_{i+1},\dots, v_j,x_t,x_{t-1},\dots, x_1$, or the robber is on the exterior of this cycle. If the robber is on the interior of this cycle, then $P_5 = v_i,v_{i+1},\dots, v_j$ is a subpath of $P_1$, hence, $C_1$ can move from guarding $P_1$ to guarding $P_5$, without ever allowing the robber to leave the interior of the cycle. At this point $P_2$ is on the exterior of the cycle $P_3\cup P_5$, so $P_2$ does not need to be guarded, as the robber cannot cross $P_4$ or $P_5$. Thus, cops $C_2$ and $C_4$ neither of which is on the interior of the cycle can make their way to vertices in $P_4$ or $P_5$ and the cops are again in state ($2$), while $H_{i+1}$ contains $H_i$ as a strict subgraph, given $P_4$ contains at least one internal vertex. Otherwise, if the robber is on the exterior of the cycle formed by $P_3 \cup P_5$, then the robber must be on the interior of the cycle formed by $P_2$ and $P_4$, by the planarity of $G$ and our choice of $v_i$ and $v_j$. By our assumption that for all $v_r$ with neighbours in $A$, $i \leq r \leq j$, all paths from the robber to $H_{i+1}$, which is now all vertices not on the interior of $P_2$ and $P_4$, contain a vertex in $P_2$ or $P_4$, implying that the cops are in state ($2$). So again the cops are again in state ($2$), while $H_{i+1}$ contains $H_i$ as a strict subgraph, given $P_4$ contains at least one internal vertex. So end stage $i+1$, as we have proved the claim in this situation.
 
This completes the proof of Claim 4.\hfill $\diamond$

As this covers every possible case in our induction, we have now shown that for all $i\geq 0$, there is a cop strategy to make $H_{i+1}$ contain $H_i$ as a strict subgraph, until such a time that $H_k = G$, for some finite $k$. Also recall that $H_k$ is the cop territory, so if the robber is on a vertex in $H_k$, then they will be captured by the cops. As $G = H_k$, the robber must be in the cop territory, hence we conclude that four cops will capture the robber. This concludes the proof.
\end{proof}

\section{Constructing Graphs where $\attCop(H)-\cop(H)=3$}\label{Section H}

One of the main unanswered questions raised by Bonato et al.~\cite{bonato2014robber} is how large the difference can be between cop number and attacking cop number? It was shown in \cite{bonato2014robber} that for any bipartite graph $G$,  $\attCop(G) - \cop(G) \leq 2$. Moreover, it was shown in \cite{bonato2014robber} that $\attCop(G) \leq \cop(G) + 2\Delta(G) - 2$, so for graphs with bounded maximum degree, the cop number and attacking cop number are at most a constant apart. Is there an integer $N$, such that for all graphs $G$, $\attCop(G) - \cop(G) \leq N$? If not, is it possible there exists a constant $0< \epsilon \leq 1$ for which there are infinitely many integers $k$ and graphs $G_k$ such that $\cop(G_k)= k$ and $\attCop(G_k) \geq (1+\epsilon)k$?

Unfortunately we were not able to answer these questions. However, we were able to make progress. As mentioned in the introduction, Bonato et al.~\cite{bonato2014robber} showed that the line graph of the Peterson graph has cop number $2$ and attacking cop number $4$. Hence, if $N$ exists, then $N \geq 2$. Furthermore, we cannot discount the possibility that there exists graphs with arbitrarily large cop number, whose attacking cop number is twice their cop number. 

We prove that if $N$ exists, then $N \geq 3$ by providing $17$ graphs with cop number $3$ and attacking cop number $6$. We note that our construction, see Lemma~\ref{Lemma: Square Lower Bound}, does not rely on hypergraphs, unlike Lemma~8 from \cite{bonato2014robber}, and as a result may be easier to use when constructing graphs with large attacking cop number. 
It may also be true that for $k\geq 4$, our method produces graphs  $H$ with $\attCop(H) = 2k$ and $\cop(H) = k$.
Unfortunately, given we use computer assistance to verify the cop number of our constructions, we cannot check any case where $k\geq 4$. This is because the smallest graphs $H$ which satisfy these assumptions for $k\geq 4$ are too large for our computers to handle.

The next lemma is the key observation used in our constructions. Before exploring that, we need the definition of the square of a graph. Given a graph $G$, we define the square of $G$, denoted $G^2$, to be the graph obtained by adding edges $(u,v)$ to $G$ for every pair $u,v\in V$ such that $\dist(u,v)=2$ in $G$. We note that $G^2$ is called the square of $G$ because the $G^2$ is the reflexive graph containing no multiedges associated with
the adjacency matrix of $G$ squared. We note that the square, and more generally the $k^{\text{th}}$ power of a graph are well-studied objects, particularly as they relate to many algebraic graph invariants, as well as invariants related to graph distance.

\begin{lemma}\label{Lemma: Square Lower Bound}
    If $G = (V,E)$ is a graph with girth at least $9$ and minimum degree $\delta \geq 3$, then $\attCop(G^2-E)\geq \min\{2\delta,\gamma(G^2-E)\}$.
\end{lemma}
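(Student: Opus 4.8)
The plan is to assume $k := \attCop(H) < \min\{2\delta, \gamma(H)\}$, where $H = G^2 - E$, and produce a robber strategy that survives forever, a contradiction. Everything is driven by the local structure of $H$ around a vertex $v$. Writing $N_G(v) = \{w_1, \dots, w_d\}$ with $d = \deg_G(v) \geq \delta$ and $S_i := N_G(w_i) \setminus \{v\}$, the set $N_H[v]$ is exactly $\{v\} \cup S_1 \cup \dots \cup S_d$, each $\{v\} \cup S_i$ is a clique of $H$ on $1 + |S_i| \geq \delta$ vertices, and distinct $S_i$'s induce no edges in $H$. For $x \in S_i$ put $T_x := N_H(x) \setminus (\{v\} \cup S_i)$, so that $N_H[x] = \{v\} \cup S_i \cup T_x$. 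The first block of work is to check, using the girth hypothesis, that the sets $S_1, \dots, S_d$ together with all the sets $T_x$ (over $x \in \bigcup_i S_i$) are pairwise disjoint; each such claim reduces to forbidding a short cycle in $G$, and the longest cycle that appears — when comparing $T_x$ and $T_y$ for $x,y$ in different cliques — has length $8$, which is precisely why girth at least $9$ is assumed. Consequently the regions $R_i := S_i \cup \bigcup_{x \in S_i} T_x$, for $i = 1, \dots, d$, are pairwise disjoint.

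The second block is a local escape lemma: if it is the robber's turn, the robber occupies a vertex $u$, and fewer than $2\deg_G(u)$ cops are on the board, then there is a vertex $u' \in N_H[u]$ with no cop in $N_H(u')$ and at most one cop on $u'$ itself. (Such a $u'$ is safe: the robber moves there, attacking the lone cop if one is present, and afterwards no cop is adjacent to the robber.) Suppose no such $u'$ exists. Then for every clique $\{u\} \cup S_i$ and every $x \in S_i$, either $(S_i \setminus \{x\}) \cup T_x$ contains a cop or $x$ carries at least two cops (here we use that $u$ itself carries no cop). A short case analysis — on how many vertices of $S_i$ are occupied and whether $S_i$ contains a cop at all, using $|S_i| \geq \delta - 1 \geq 2$ — shows this forces at least two cops into $R_i$ in every case. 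Since the $R_i$ are pairwise disjoint, this gives at least $2d \geq 2\delta$ cops, contradicting the hypothesis.

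Finally I would assemble the strategy. As $k < \gamma(H)$, the robber can start on a vertex with cop-free closed neighbourhood. On every subsequent robber turn the robber sits at some $u$ with at most $k < 2\delta \leq 2\deg_G(u)$ cops remaining, so the escape lemma applies and the robber moves to a safe vertex, attacking whenever a cop sits on it; attacks only lower the cop count, so the inequality $(\text{number of cops}) < 2\deg_G(\cdot)$ is preserved. After each robber turn no cop is adjacent to the robber, hence no cop can capture on the following turn, and the robber evades capture forever — contradicting $\attCop(H) = k$. Therefore $\attCop(H) \geq \min\{2\delta, \gamma(H)\}$.

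The delicate part is the first block: it is really a list of small girth arguments in $G$, and one must check that each closed walk produced is a genuine cycle (all vertices distinct) of length at most $8$, so that girth $9$ is both sufficient for and exactly used by the argument. Once the disjointness of the $R_i$ is in hand, the counting lemma and the assembly of the full strategy are routine.
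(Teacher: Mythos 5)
Your proposal is correct and follows essentially the same route as the paper's proof: the cliques $\{u\}\cup S_i$ and the external sets $T_x$ are exactly the paper's cliques $K_v$ and their outside neighbourhoods, girth $9$ is used in the same way to make these regions pairwise disjoint, and the ``two cops per region'' count (covering the attack/backup option) together with a start on an undominated vertex is precisely the paper's argument. The only differences are presentational: you package the cornering analysis as a local escape lemma over disjoint regions $R_i$, while the paper phrases it as a contradiction at a position where every robber move leads to capture.
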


\begin{proof}
    Let $G = (V,E)$ be a graph with girth at least $9$ and minimum degree $\delta \geq 3$, let $H = G^2 - E$, and $k = \min\{2\delta,\gamma(H)\}$. Thus, $(u,v)\in E(H)$ if and only if $\dist_G(u,v)=2$.  If $k = \gamma(H)$, then we note by Observation~\ref{Ob: Trivial upper bound}, that $\attCop(H) \leq k$.

    We aim to show that if $t<2\delta$ cops do not begin the game in a dominating set, then the robber will win the game. 
    Let $t<2\delta$ be a fixed integer. 

    Suppose cops $C_1,\dots, C_t$ begin the game in an ideal formation in $H$ to capture the robber $R$, subject to the constraint that this formation is not a dominating set. Identify each cop and the robber with the current vertex that they occupy, and update this as the game progresses. As the cops do not begin the game on a dominating set, there exists a vertex $z \in V$ such that $z \notin N[C_i]$ for any $1 \leq i \leq t$.
    Let $R$ begin the game on such a vertex $z$. Thus, the robber is guaranteed at least one turn to move before being captured.

    Now suppose for the sake of contradiction that after some series of play it is the robber's turn to move, but no matter where they move they will be captured on the next cop turn. If such a situation is impossible, then the robber will never be captured and it follows that $t$ cops beginning in a non-dominating set cannot catch the robber.

    Given the definition of $H$, for each $v \in N_G(R)$, the neighbourhood of $R$ in $G$, the vertices $N_G(v)$ form a clique which we call $K_v$ in $H$. Also note that as $G$ has girth at least $9$ for all $u,v \in N_G(R)$, $V(K_v) \cap V(K_u) = \{R\}$. Furthermore, as $G$ has girth at least $9$ for all $u, v \in N_G(R)$ and $x \in V(K_v)\setminus \{R\}$, $y \in V(K_u)\setminus \{R\}$, $N(x) \cap N(y) = \{R\}$ in $H$ if $u\neq v$ and $N(x) \cap N(y) = V(K_u)=V(K_v)$ in $H$ if $u = v$. 
    
    As for all $v \in N_G(R)$ the cops will capture the robber if the robber moves to $x \in V(K_v)\setminus \{R\}$, there exists a cop $C$ such that $x \in N_H[C]$. Then either there exists a cop $C \in V(K_v)$ or for all $x \in V(K_v)\setminus \{R\}$, there exists a cop $C \in N_H(x)\setminus V(K_v)$.  If there is a cop $C$ in $V(K_v)$, then there must be a second cop adjacent to $C$, otherwise the robber can avoid being captured for one more turn by attacking $C$. As for all $x \in V(K_v) \setminus \{R\}$ and $ y \in V(K_u) \setminus \{R\}$, where $u \neq v$,
    $$\big(  N(x)\setminus \{R\} \big)\cap \big( N(y)\setminus \{R\} \big) = \emptyset
    $$
    the cop $C$ in $V(K_v)$ and its neighbouring cop cannot guard vertices $y \in V(K_u)\setminus \{R\}$. Furthermore, given for all $x,y \in V(K_v) \setminus \{R\}$,
    $$
    \big( N(x)\setminus V(K_v) \big) \cap \big( N(y)\setminus V(K_v)  \big) = \emptyset
    $$
    if there is no cop in $V(K_v)$, then for each $x \in V(K_v)\setminus \{R\}$ there must exists a distinct cop $C_x \in N_H(x) \setminus V(K_v)$. Additionally for each of these cops $C_x$, $N_H(C_x) \cap N_H(R) = \{x\}$. Hence, if there is no cop in $V(K_v)$, then there must be at least $|V(K_v)\setminus \{R\}| = \deg_G(v) -1 \geq \delta -1 \geq 2$ cops who are guarding vertices in $K_v$, and these cops cannot be guarding any other neighbour of $R$ in $H$.

    Thus, for each $v \in N_G(R)$ there must be at least $2$ cops guarding vertices in $V(K_v)$ and these cops cannot guard vertices in $V(K_u)$ for $u \neq v$. But this implies $t \geq 2\deg_G(v) \geq 2\delta$ contradicting our assumption that $t < 2\delta$. Therefore, if $t<2\delta$ cops do not begin the game in a dominating set, then the robber will win the game. This implies the desired result that $\attCop(H) \geq \min\{2\delta,\gamma(G^2-E)\}$. This concludes the proof.
\end{proof}

Given Lemma~\ref{Lemma: Square Lower Bound}, to demonstrate an $\epsilon>0$ such that there are infinitely many integers $k$ and graphs $G_k$ such that $\cop(G_k)= k$ and $\attCop(G_k) \geq (1+\epsilon)k$, it suffices to find a family of graphs $G$ with arbitrarily large minimum degree $\delta$ and girth at least $9$, such that $\cop(G_k^2-E) \leq \frac{2\delta}{1+\epsilon}$. This seems to be a hard problem for two reasons. First, graphs with large girth and large minimum degree are quite unintuitive. For proof of this one need look no further than the famous problem of proving the existence of graphs with high girth and high chromatic number, which was solved by Erd\H{o}s. Second, we are aware of no way to bound the cop number of $G^2-E$ from above without computer assistance.

Despite this challenge, or perhaps because of it, we believe this problem to be quite interesting. It seems that in order to understand how the attacking cop number and cop number are different, we require a much better understanding of the cop number. Thus, the attacking cop number should be of interest not just for its own sake, but as understanding it will likely require better understanding of the cop number, and in all probability the development of new tools that can be applied in other pursuit-evasion games.

\begin{figure}
\begin{minipage}[t]{0.495\textwidth}
\scalebox{0.4}{
  \begin{tikzpicture}
      \draw
        (0.0:10) node (1){1}
        (6.207:10) node (2){2}
        (12.414:10) node (3){3}
        (18.621:10) node (4){4}
        (24.828:10) node (5){5}
        (31.034:10) node (6){6}
        (37.241:10) node (7){7}
        (43.448:10) node (8){8}
        (49.655:10) node (9){9}
        (55.862:10) node (10){10}
        (62.069:10) node (11){11}
        (68.276:10) node (12){12}
        (74.483:10) node (13){13}
        (80.69:10) node (14){14}
        (86.897:10) node (15){15}
        (93.103:10) node (16){16}
        (99.31:10) node (17){17}
        (105.517:10) node (18){18}
        (111.724:10) node (19){19}
        (117.931:10) node (20){20}
        (124.138:10) node (21){21}
        (130.345:10) node (22){22}
        (136.552:10) node (23){23}
        (142.759:10) node (24){24}
        (148.966:10) node (25){25}
        (155.172:10) node (26){26}
        (161.379:10) node (27){27}
        (167.586:10) node (28){28}
        (173.793:10) node (29){29}
        (180.0:10) node (30){30}
        (186.207:10) node (31){31}
        (192.414:10) node (32){32}
        (198.621:10) node (33){33}
        (204.828:10) node (34){34}
        (211.034:10) node (35){35}
        (217.241:10) node (36){36}
        (223.448:10) node (37){37}
        (229.655:10) node (38){38}
        (235.862:10) node (39){39}
        (242.069:10) node (40){40}
        (248.276:10) node (41){41}
        (254.483:10) node (42){42}
        (260.69:10) node (43){43}
        (266.897:10) node (44){44}
        (273.103:10) node (45){45}
        (279.31:10) node (46){46}
        (285.517:10) node (47){47}
        (291.724:10) node (48){48}
        (297.931:10) node (49){49}
        (304.138:10) node (50){50}
        (310.345:10) node (51){51}
        (316.552:10) node (52){52}
        (322.759:10) node (53){53}
        (328.966:10) node (54){54}
        (335.172:10) node (55){55}
        (341.379:10) node (56){56}
        (347.586:10) node (57){57}
        (353.793:10) node (58){58};
      \begin{scope}[-]
        \draw (1) to (2);
        \draw (1) to (58);
        \draw (1) to (9);
        \draw (2) to (3);
        \draw (2) to (27);
        \draw (3) to (4);
        \draw (3) to (42);
        \draw (4) to (5);
        \draw (4) to (13);
        \draw (5) to (6);
        \draw (5) to (47);
        \draw (6) to (7);
        \draw (6) to (55);
        \draw (7) to (8);
        \draw (7) to (34);
        \draw (8) to (9);
        \draw (8) to (20);
        \draw (9) to (10);
        \draw (10) to (11);
        \draw (10) to (39);
        \draw (11) to (12);
        \draw (11) to (52);
        \draw (12) to (13);
        \draw (12) to (31);
        \draw (13) to (14);
        \draw (14) to (15);
        \draw (14) to (22);
        \draw (15) to (16);
        \draw (15) to (36);
        \draw (16) to (17);
        \draw (16) to (54);
        \draw (17) to (18);
        \draw (17) to (41);
        \draw (18) to (19);
        \draw (18) to (48);
        \draw (19) to (20);
        \draw (19) to (29);
        \draw (20) to (21);
        \draw (21) to (22);
        \draw (21) to (44);
        \draw (22) to (23);
        \draw (23) to (24);
        \draw (23) to (57);
        \draw (24) to (25);
        \draw (24) to (40);
        \draw (25) to (26);
        \draw (25) to (33);
        \draw (26) to (27);
        \draw (26) to (53);
        \draw (27) to (28);
        \draw (28) to (29);
        \draw (28) to (37);
        \draw (29) to (30);
        \draw (30) to (31);
        \draw (30) to (56);
        \draw (31) to (32);
        \draw (32) to (33);
        \draw (32) to (45);
        \draw (33) to (34);
        \draw (34) to (35);
        \draw (35) to (36);
        \draw (35) to (50);
        \draw (36) to (37);
        \draw (37) to (38);
        \draw (38) to (39);
        \draw (38) to (46);
        \draw (39) to (40);
        \draw (40) to (41);
        \draw (41) to (42);
        \draw (42) to (43);
        \draw (43) to (44);
        \draw (43) to (51);
        \draw (44) to (45);
        \draw (45) to (46);
        \draw (46) to (47);
        \draw (47) to (48);
        \draw (48) to (49);
        \draw (49) to (50);
        \draw (49) to (58);
        \draw (50) to (51);
        \draw (51) to (52);
        \draw (52) to (53);
        \draw (53) to (54);
        \draw (54) to (55);
        \draw (55) to (56);
        \draw (56) to (57);
        \draw (57) to (58);
      \end{scope}
    \end{tikzpicture}
}
\end{minipage}
\begin{minipage}[t]{0.495\textwidth}
\scalebox{0.4}{
\begin{tikzpicture}
      \draw
        (0.0:10) node (1){1}
        (6.207:10) node (2){2}
        (12.414:10) node (3){3}
        (18.621:10) node (4){4}
        (24.828:10) node (5){5}
        (31.034:10) node (6){6}
        (37.241:10) node (7){7}
        (43.448:10) node (8){8}
        (49.655:10) node (9){9}
        (55.862:10) node (10){10}
        (62.069:10) node (11){11}
        (68.276:10) node (12){12}
        (74.483:10) node (13){13}
        (80.69:10) node (14){14}
        (86.897:10) node (15){15}
        (93.103:10) node (16){16}
        (99.31:10) node (17){17}
        (105.517:10) node (18){18}
        (111.724:10) node (19){19}
        (117.931:10) node (20){20}
        (124.138:10) node (21){21}
        (130.345:10) node (22){22}
        (136.552:10) node (23){23}
        (142.759:10) node (24){24}
        (148.966:10) node (25){25}
        (155.172:10) node (26){26}
        (161.379:10) node (27){27}
        (167.586:10) node (28){28}
        (173.793:10) node (29){29}
        (180.0:10) node (30){30}
        (186.207:10) node (31){31}
        (192.414:10) node (32){32}
        (198.621:10) node (33){33}
        (204.828:10) node (34){34}
        (211.034:10) node (35){35}
        (217.241:10) node (36){36}
        (223.448:10) node (37){37}
        (229.655:10) node (38){38}
        (235.862:10) node (39){39}
        (242.069:10) node (40){40}
        (248.276:10) node (41){41}
        (254.483:10) node (42){42}
        (260.69:10) node (43){43}
        (266.897:10) node (44){44}
        (273.103:10) node (45){45}
        (279.31:10) node (46){46}
        (285.517:10) node (47){47}
        (291.724:10) node (48){48}
        (297.931:10) node (49){49}
        (304.138:10) node (50){50}
        (310.345:10) node (51){51}
        (316.552:10) node (52){52}
        (322.759:10) node (53){53}
        (328.966:10) node (54){54}
        (335.172:10) node (55){55}
        (341.379:10) node (56){56}
        (347.586:10) node (57){57}
        (353.793:10) node (58){58};
      \begin{scope}[-]
        \draw (1) to (3);
        \draw (1) to (8);
        \draw (1) to (10);
        \draw (1) to (27);
        \draw (1) to (49);
        \draw (1) to (57);
        \draw (3) to (5);
        \draw (3) to (13);
        \draw (3) to (27);
        \draw (3) to (41);
        \draw (3) to (43);
        \draw (8) to (6);
        \draw (8) to (10);
        \draw (8) to (19);
        \draw (8) to (21);
        \draw (8) to (34);
        \draw (10) to (12);
        \draw (10) to (38);
        \draw (10) to (40);
        \draw (10) to (52);
        \draw (27) to (25);
        \draw (27) to (29);
        \draw (27) to (37);
        \draw (27) to (53);
        \draw (49) to (18);
        \draw (49) to (35);
        \draw (49) to (47);
        \draw (49) to (51);
        \draw (49) to (57);
        \draw (57) to (22);
        \draw (57) to (24);
        \draw (57) to (30);
        \draw (57) to (55);
        \draw (2) to (4);
        \draw (2) to (9);
        \draw (2) to (26);
        \draw (2) to (28);
        \draw (2) to (42);
        \draw (2) to (58);
        \draw (4) to (6);
        \draw (4) to (12);
        \draw (4) to (14);
        \draw (4) to (42);
        \draw (4) to (47);
        \draw (9) to (7);
        \draw (9) to (11);
        \draw (9) to (20);
        \draw (9) to (39);
        \draw (9) to (58);
        \draw (26) to (24);
        \draw (26) to (28);
        \draw (26) to (33);
        \draw (26) to (52);
        \draw (26) to (54);
        \draw (28) to (19);
        \draw (28) to (30);
        \draw (28) to (36);
        \draw (28) to (38);
        \draw (42) to (17);
        \draw (42) to (40);
        \draw (42) to (44);
        \draw (42) to (51);
        \draw (58) to (23);
        \draw (58) to (48);
        \draw (58) to (50);
        \draw (58) to (56);
        \draw (5) to (7);
        \draw (5) to (13);
        \draw (5) to (46);
        \draw (5) to (48);
        \draw (5) to (55);
        \draw (13) to (11);
        \draw (13) to (15);
        \draw (13) to (22);
        \draw (13) to (31);
        \draw (41) to (16);
        \draw (41) to (18);
        \draw (41) to (24);
        \draw (41) to (39);
        \draw (41) to (43);
        \draw (43) to (21);
        \draw (43) to (45);
        \draw (43) to (50);
        \draw (43) to (52);
        \draw (6) to (34);
        \draw (6) to (47);
        \draw (6) to (54);
        \draw (6) to (56);
        \draw (12) to (14);
        \draw (12) to (30);
        \draw (12) to (32);
        \draw (12) to (52);
        \draw (14) to (16);
        \draw (14) to (21);
        \draw (14) to (23);
        \draw (14) to (36);
        \draw (47) to (18);
        \draw (47) to (38);
        \draw (47) to (45);
        \draw (7) to (20);
        \draw (7) to (33);
        \draw (7) to (35);
        \draw (7) to (55);
        \draw (46) to (32);
        \draw (46) to (37);
        \draw (46) to (39);
        \draw (46) to (44);
        \draw (46) to (48);
        \draw (48) to (17);
        \draw (48) to (19);
        \draw (48) to (50);
        \draw (55) to (16);
        \draw (55) to (30);
        \draw (55) to (53);
        \draw (34) to (25);
        \draw (34) to (32);
        \draw (34) to (36);
        \draw (34) to (50);
        \draw (54) to (15);
        \draw (54) to (17);
        \draw (54) to (52);
        \draw (54) to (56);
        \draw (56) to (23);
        \draw (56) to (29);
        \draw (56) to (31);
        \draw (20) to (18);
        \draw (20) to (22);
        \draw (20) to (29);
        \draw (20) to (44);
        \draw (33) to (24);
        \draw (33) to (31);
        \draw (33) to (35);
        \draw (33) to (45);
        \draw (35) to (15);
        \draw (35) to (37);
        \draw (35) to (51);
        \draw (19) to (17);
        \draw (19) to (21);
        \draw (19) to (30);
        \draw (21) to (23);
        \draw (21) to (45);
        \draw (11) to (31);
        \draw (11) to (39);
        \draw (11) to (51);
        \draw (11) to (53);
        \draw (39) to (24);
        \draw (39) to (37);
        \draw (38) to (36);
        \draw (38) to (40);
        \draw (38) to (45);
        \draw (40) to (17);
        \draw (40) to (23);
        \draw (40) to (25);
        \draw (52) to (50);
        \draw (31) to (29);
        \draw (31) to (45);
        \draw (51) to (44);
        \draw (51) to (53);
        \draw (53) to (16);
        \draw (53) to (25);
        \draw (30) to (32);
        \draw (32) to (25);
        \draw (32) to (44);
        \draw (15) to (17);
        \draw (15) to (22);
        \draw (15) to (37);
        \draw (22) to (24);
        \draw (22) to (44);
        \draw (16) to (18);
        \draw (16) to (36);
        \draw (23) to (25);
        \draw (36) to (50);
        \draw (37) to (29);
        \draw (18) to (29);
      \end{scope}
    \end{tikzpicture}

}
\end{minipage}
    \caption{The graph $G_1$ (left) from \cite{biggs1980trivalent,brinkmann1995smallest} and the graph $H_1 = G_1^2 - E(G_1)$ (right).}
    \label{fig: G_1,H_1}
\end{figure}
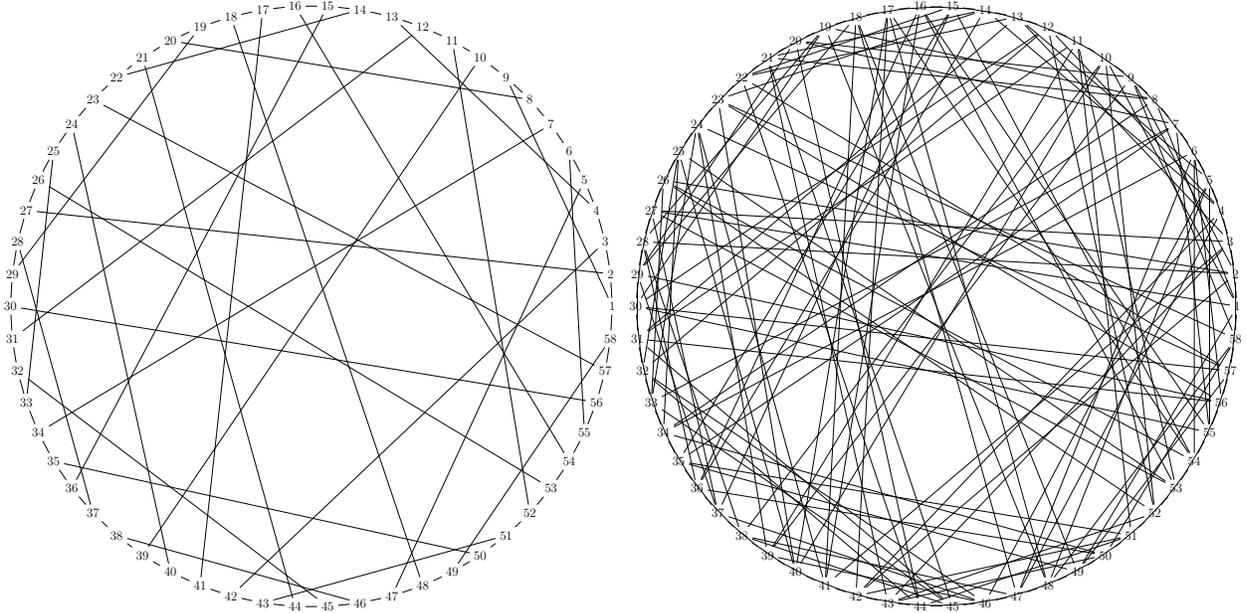

As a result of these challenges, we rely on computer assistance to upper bound the cop number of $G^2-E$. Given this, it is convenient for us to choose $G$ as small as possible, so that the problem remains computable. Fortunately, the graphs of smallest order with minimum degree $3$ and girth $9$ are known. These graphs are $(3,9)$-cages, the first of which was discovered in \cite{biggs1980trivalent}, with all $18$ being later characterised in \cite{brinkmann1995smallest}. We adopt the labeling $G_1,\dots, G_{18}$ of the $(3,9)$-cages given in \cite{brinkmann1995smallest}. All $(3,9)$-cages are order $58$, and for each $1\leq i \leq 18$ we let $H_i = G_i^2 - E(G_i)$. Note that for all $i\neq j$, $H_i$ and $H_j$ are not isomorphic. This was verified by computer and can be checked at \cite{Clow2024Git}.

Next, we prove that $17$ of the $18$ graphs $H_i$ have cop number $3$ and attacking cop number $6$. For an example of such a graph $H$ see Figure~\ref{fig: G_1,H_1}. Curiously, $\cop(H_{18})>3$, however we were unable to compute if $\cop(H_{18}) = 4$. We can verify that $\cop(H_{18})>3$ while not being able to verify if $\cop(H_{18}) = 4$ because the best known algorithm for checking if $\cop(G) \leq k$ is given by \cite{petr2022faster} and is $O(kn^{k+2})$ time, while the code we use from \cite{Afanassiev2017} implements an algorithm appearing \cite{bonato2011game} which is $O(n^{3k+3})$ time. The value of $\attCop(H_{18})-\cop(H_{18})$ remains open.

\begin{theorem}\label{Thm: cc = 2c =6}
    For all $1 \leq i \leq 17$, $\cop(H_i)=3$ and  $\attCop(H_i)=6$.
\end{theorem}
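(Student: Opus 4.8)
The plan is to split the statement into the two equalities $\cop(H_i)=3$ and $\attCop(H_i)=6$, and to obtain each by pinning down a matching upper and lower bound. Three of the four resulting inequalities will follow from results already available in the excerpt; the remaining one (the upper bound on the cop number) is the part that genuinely requires computer assistance.

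First I would establish the lower bound $\attCop(H_i)\ge 6$, which is the real mathematical content. Each $G_i$ is a $(3,9)$-cage, so it is $3$-regular with girth $9$; in particular $\delta(G_i)=3$ and the girth hypothesis of Lemma~\ref{Lemma: Square Lower Bound} holds. That lemma gives
\[
\attCop(H_i)=\attCop\bigl(G_i^2-E(G_i)\bigr)\ \ge\ \min\{2\delta(G_i),\,\gamma(H_i)\}=\min\{6,\gamma(H_i)\},
\]
so it suffices to check $\gamma(H_i)\ge 6$. For this I would observe that $H_i$ is $6$-regular: for a vertex $v$, each of its three $G_i$-neighbours contributes exactly two vertices at $G_i$-distance $2$ from $v$, and the girth condition (in particular girth $\ge 5$, which forbids $3$- and $4$-cycles) forces these six vertices to be pairwise distinct and distinct from $N_{G_i}(v)$. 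Hence $|N_{H_i}[v]|=7$ for every $v$, and since $|V(H_i)|=58$ every dominating set of $H_i$ has size at least $\lceil 58/7\rceil = 9 > 6$. Therefore $\min\{6,\gamma(H_i)\}=6$ and $\attCop(H_i)\ge 6$.

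Next I would invoke computer assistance -- the implementation from \cite{Afanassiev2017} of the algorithm described in \cite{bonato2011game} -- to verify, for each $i\in\{1,\dots,17\}$, that $2<\cop(H_i)\le 3$, which gives $\cop(H_i)=3$. This is precisely the place where the restriction to $17$ of the $18$ cages enters: for $H_{18}$ one can still verify $\cop(H_{18})>3$, but the check that would confirm $\cop(H_{18})\le 4$ is out of computational reach. With $\cop(H_i)=3$ in hand, Observation~\ref{Ob: Trivial Cop bound} yields $\attCop(H_i)\le 2\cop(H_i)=6$, and combined with the previous paragraph this gives $\attCop(H_i)=6$. Together with $\cop(H_i)=3$ this is the theorem.

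The main obstacle is the computational one. There is no human-checkable cop strategy argument (in the style of the earlier sections) establishing $\cop(H_i)\le 3$ on a $58$-vertex, $6$-regular graph, and producing a clean lower bound $\cop(H_i)>2$ by hand also seems awkward since $H_i$ has triangles (the $G_i$-neighbourhood of any vertex is a clique in $H_i$), so the girth-based lower bounds do not apply. Everything else is routine once $\cop(H_i)=3$ is accepted: the lower bound $\attCop(H_i)\ge 6$ comes from Lemma~\ref{Lemma: Square Lower Bound} together with the elementary degree/domination count above, and the upper bound $\attCop(H_i)\le 6$ is immediate from Observation~\ref{Ob: Trivial Cop bound}.
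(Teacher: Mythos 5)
Your proposal is correct and follows essentially the same route as the paper: Lemma~\ref{Lemma: Square Lower Bound} together with the regularity/domination count ($H_i$ is $6$-regular, so $\gamma(H_i)\geq 58/7>6$) gives $\attCop(H_i)\geq 6$, computer assistance gives $\cop(H_i)\leq 3$, and Observation~\ref{Ob: Trivial Cop bound} closes the remaining gaps. The only (harmless) difference is that you propose to verify $\cop(H_i)>2$ computationally, whereas the paper gets $\cop(H_i)\geq 3$ for free from $\attCop(H_i)\geq 6$ and $\attCop(H_i)\leq 2\cop(H_i)$, so that extra check is logically unnecessary.
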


\begin{proof}

Consider $G_i$ for a fixed but arbitrary $1 \leq i \leq 17$. It was shown in \cite{biggs1980trivalent} that $G_i$ is a $(3,9)$-cage. That is, $G_i$ is $3$-regular and has girth $9$. Then, letting $H_i = G_i^2 - E(G_i)$, we note that 
$$
\gamma(H_i) \geq \frac{|V(H_i)|}{\Delta(H_i)+1} = \frac{58}{7} > 8
$$
so Lemma~\ref{Lemma: Square Lower Bound} implies that $\attCop(H_i) \geq 6$. 

For all $1 \leq i \leq 17$, $\cop(H_i)\leq3$ is verified using computer assistance. Our code is available at \cite{Clow2024Git}. We use the networkx package \cite{SciPyProceedings_11} to construct our graphs and code appearing at \cite{Afanassiev2017} to compute cop numbers. From here, $\attCop(H_i) \leq 2\cop(H_i)$ implies that $\cop(H_i) \geq \frac{\attCop(H_i)}{2} \geq \frac{6}{2} = 3$. Thus, $\cop(H_i)=3$ as required. Note that this implies $6 \leq \attCop(H_i) \leq 2\cop(H_i)=6$ so $\attCop(H_i)=6$ as required. This completes the proof.
\end{proof}

\section{Future Work}

Despite being introduced over a decade ago and being, at least in the authors' estimation, a natural variant, very little work has been done on Cops and Attacking Robbers. Given this, there remain many open questions regarding the attacking cop number. This section is devoted to introducing and motivating these questions.

To begin, can we extend the work from Section~3 to graphs which contain triangles? As demonstrated in Figure~\ref{fig: cc2 on triangle}, doing this will be non-trivial as Lemma~\ref{Lemma: The needed lower bound} is not true for graphs that contain triangles. We believe that solving this problem for all graphs is reasonable, and should be matter of future work.

\begin{problem}\label{Prob: Which graphs have cc = 2}
    Characterise the connected graphs with attacking cop number $2$.
\end{problem}

Recalling that characterising graphs with attacking cop number $1$ is easy, and every cop-win graph has attacking cop number $1$ or $2$, this problem reduces to characterising which graphs $G$ with cop number $2$ have $\cop(G) = \attCop(G)$. This begs the question, how important is the fact that $\cop(G) = 2 = \attCop(G)$ as opposed to $\cop(G) = k = \attCop(G)$ for some $k > 2$? More formally, observe the following question.

\begin{question}\label{Question: Qhat sufficient conditions for c = cc}
    What are some sufficient conditions for a graph $G$ to satisfy $\cop(G) = \attCop(G)$? Can we identify any necessary conditions for this to be true?
\end{question}

We next move our attention to planar graphs. While we have shown that all bipartite planar graphs have attacking cop number at most $4$, it remains unclear if there exists a planar graph with attacking cop number $5$ or even $6$. However, the only examples of graphs $H$ we know of with $\attCop(H) - \cop(H) > 1$ are far from being planar. We conjecture that this is no coincidence. Notice that if true, then Conjecture~\ref{Conj: Planar Strongest Upper Bound} would imply that all planar graphs have attacking cop number at most $4$. 

\begin{conjecture}\label{Conj: Planar Strongest Upper Bound}
    For all planar graphs $G$, $\attCop(G) \leq \cop(G) + 1$.
\end{conjecture}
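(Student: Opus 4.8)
The plan is to adapt the territory-expansion strategy of Aigner and Fromme behind the bound $\cop(G)\le 3$ for planar graphs (and its attacking-robber refinement in Theorem~\ref{Thm: bipartite planar}), spending one extra cop as a ``reserve'' whose sole purpose is to absorb attacks. First I would dispose of the trivial case: if $\cop(G)=1$ then $\attCop(G)\le 2\cop(G)=2$ by Observation~\ref{Ob: Trivial Cop bound}, so the inequality holds. Since every planar graph has $\cop(G)\le 3$, it remains to treat $\cop(G)=2$ (proving $\attCop(G)\le 3$) and $\cop(G)=3$ (proving $\attCop(G)\le 4$); note that a blanket bound $\attCop(G)\le 4$ is \emph{not} enough, because it would miss the conjecture for planar graphs of cop number $2$. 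I would try to handle both remaining cases by the same scheme: with $\cop(G)+1$ cops, maintain an expanding cop territory $H_i$ whose complement the robber is confined to, bounded at each stage by at most two guarded geodesic paths (or a guarded cut vertex), exactly as in the classical argument, while keeping one cop within distance one of whichever path-guarding cop the robber currently threatens.

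The core technical ingredient I would need is a \emph{robust guarding lemma}: in a planar graph, a path $P$ that is shortest within the region currently accessible to the robber (as in the remark after Lemma~\ref{Lemma: bipartite geodesic path}) can be guarded by a single cop $C$ with the help of one reserve cop $B$ that shadows $C$, so that an attack on $C$ is punished by $B$ on the next turn, and so that $B$ can be released once the robber has been pushed away from $P$. In the bipartite setting this is precisely Lemma~\ref{Lemma: bipartite geodesic path}; the difficulty is that, as noted before that lemma and in \cite{bonato2014robber}, geodesic paths are not $(1,1)$-guardable once the graph contains an odd cycle, so the lemma is \emph{false in its naive form} for general planar graphs. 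The real work is to find the correct one-reserve substitute. A geodesic path is always $(2,0)$-guardable in Cops and Attacking Robbers (double the classical guarding cop), but two cops per path is too expensive; one promising route is to guard a short arc of $P$ rather than a single vertex, letting the reserve occupy the companion vertex so the pair covers both parities of the robber's ``shadow'', and to use planarity to confine any exploitation of an odd face adjacent to $P$ to a bounded, local configuration the reserve can always neutralise.

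Granting such a lemma, the induction should run as in Theorem~\ref{Thm: bipartite planar}: the robber sits in a face bounded by guarded geodesic paths; two cops hold the boundary, a third subdivides the robber's region into a strictly shorter geodesic (assisted, in the role of the second cop of Lemma~\ref{Lemma: bipartite geodesic path}, by the reserve), and at every moment at most one boundary-guarding cop is within the robber's reach, so a single reserve suffices; when $H_k=G$ the robber is caught. The analogue with three cops (one boundary cop, one subdivider, one reserve) should give the $\cop(G)=2$ case, but the margins are tighter there because the subdivider and the reserve compete for the same cop during the windows when a new path is being established. I would verify the $\cop(G)=2$ subcase first, since this is the most likely place for the count $\cop(G)+1$ to prove insufficient, and if it fails it would already refute the conjecture.

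The main obstacle, then, is the robust guarding lemma: forcing one reserve cop to suffice for protecting a path-guarding cop against attacks in the presence of odd cycles, and in particular handling ``corners'' where two guarded paths meet and the robber can simultaneously threaten both. If a clean local fix resists, the natural fallback milestone is the weaker statement that every planar graph has $\attCop(G)\le 4$: allow two cops per boundary path (automatically safe against attacks by the doubling observation) but exploit planarity to re-use cops so that four rather than six suffice overall. Even this weaker bound is currently open, and settling it would be a concrete first step toward Conjecture~\ref{Conj: Planar Strongest Upper Bound}.
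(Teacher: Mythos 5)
The statement you are addressing is a conjecture: the paper offers no proof of it, and indeed explicitly leaves open even the weaker question of whether every (not necessarily bipartite) planar graph satisfies $\attCop(G)\leq 4$. So there is no paper proof to compare against, and the only question is whether your proposal itself closes the gap. It does not. What you have written is a research programme whose entire difficulty is concentrated in the ``robust guarding lemma'' that you yourself flag as missing: in the presence of odd cycles, geodesic paths are not $(1,1)$-guardable (this is exactly the obstruction recorded in \cite{bonato2014robber} and reiterated before Lemma~\ref{Lemma: bipartite geodesic path}), and you give no argument that a single shared reserve cop can protect a path-guarding cop, nor how the reserve is to be recycled when the Aigner--Fromme scheme requires two guarded boundary paths while a third path is being established. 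The case $\cop(G)=2$, where you would need $\attCop(G)\leq 3$ with only one guard, one subdivider and one reserve, is even tighter, and you correctly note it might instead refute the conjecture --- which underlines that the proposal does not decide the statement either way.

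To be clear about what is and is not salvageable: the easy reductions you make are fine ($\cop(G)=1$ gives $\attCop(G)\leq 2$ by Observation~\ref{Ob: Trivial Cop bound}, and planarity caps $\cop(G)$ at $3$), and the territory-expansion framework you describe is the same one the paper uses for its bipartite planar bound in Theorem~\ref{Thm: bipartite planar}, where bipartiteness is precisely what makes Lemma~\ref{Lemma: bipartite geodesic path} true. But without a proved substitute for that lemma in graphs containing odd cycles --- including a treatment of the ``corner'' configurations where the robber threatens two guarded paths at once --- the induction never gets off the ground, and the conjecture remains open after your argument just as it was before it. If you pursue this, the honest first milestone is the one you name at the end: a bound of $\attCop(G)\leq 4$ for all planar graphs, obtained by showing how four cops can share guarding duties; even that would be new, and it is strictly weaker than the conjectured $\cop(G)+1$ bound for planar graphs of cop number $2$.
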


Along these same lines, we recall Theorem~9 from \cite{bonato2014robber}, which states that if $G$ is bipartite then $\attCop(G) \leq \cop(G) + 2$. We notice that all examples of a graphs $H$ we know of with $\attCop(H) - \cop(H) > 1$ contain triangles and are therefore not bipartite. As with planar graphs we do not believe this is a coincidence. However, bipartite graphs form a much more complicated class to study in pursuit-evasion games than planar graphs. As a result we do not believe there is sufficient evidence for us to conjecture that for all bipartite graphs $G$, $\attCop(G) \leq \cop(G)+1$. Instead we pose the following question.

\begin{question}\label{Question: Bipartite cc = c + 2}
    Does there exist a bipartite graph $G$ with $\attCop(G) = \cop(G) + 2$?
\end{question}

We now proceed to our discussion of general graphs. In particular, we will continue to focus on how large the difference between cop number and attacking cop number can be. We begin with the following conjecture, which was stated in plain text earlier in the paper. 
\begin{conjecture}\label{Conj: Weak}
    For all non-negative integers $k$ there exists a graph $H$ such that $$\attCop(H)-\cop(H) \geq k.$$
\end{conjecture}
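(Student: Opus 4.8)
We outline a possible approach to Conjecture~\ref{Conj: Weak}. Fix $k \ge 1$. The plan is to apply Lemma~\ref{Lemma: Square Lower Bound} to a family of regular graphs of unbounded degree and girth at least $9$. Such a family exists: by the Erd\H{o}s--Sachs theorem, for every $d \ge 3$ there is a connected $d$-regular graph $G_d$ of girth at least $9$ (explicit algebraic constructions such as Cayley or Ramanujan graphs also serve). Write $n_d = |V(G_d)|$. The Moore bound for odd girth gives
\[
n_d \ \ge\ 1 + d + d(d-1) + d(d-1)^2 + d(d-1)^3,
\]
so $n_d$ is $\Theta(d^4)$. Set $H_d = G_d^2 - E(G_d)$. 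Since $G_d$ is $d$-regular of girth at least $5$, every vertex of $H_d$ has exactly $d(d-1)$ neighbours, so
\[
\gamma(H_d)\ \ge\ \frac{n_d}{d(d-1)+1}
\]
is $\Theta(d^2)$; in particular $\gamma(H_d) > 2d$ for every $d \ge 3$ (for $d=3$ this is the estimate $58/7 > 6$ used in Theorem~\ref{Thm: cc = 2c =6}). Hence Lemma~\ref{Lemma: Square Lower Bound} yields $\attCop(H_d) \ge \min\{2d,\gamma(H_d)\} = 2d$ for every $d \ge 3$.

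Given this lower bound, the conjecture reduces to an \emph{upper} bound on the ordinary cop number: it suffices to prove $\cop(H_d) \le 2d - k$ for some $d \ge d_0(k)$, since then $H_d$ is the required graph. Theorem~\ref{Thm: cc = 2c =6} is exactly this route executed for $d=3$, where a computer check supplies $\cop(H_i) = 3 = 2\cdot 3 - 3$; its data suggest the far stronger target
\[
\cop\big(G^2 - E(G)\big)\ \le\ \delta(G) \quad\text{for every graph $G$ of girth at least $9$,}
\]
which combined with the lower bound would give $\attCop(H_d) - \cop(H_d) \ge 2d - d = d$, proving not merely the conjecture but its $(1+\epsilon)$-strengthening with $\epsilon = 1$.

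The step I expect to be the genuine obstacle is this upper bound on $\cop(G^2 - E(G))$, for which no method beyond exhaustive search is currently known. A natural line of attack is to construct an explicit strategy for $d$ (or $O(d)$) cops exploiting the rigid local geometry forced by girth at least $9$: around any vertex $v$, the closed neighbourhood $N_{H_d}[v]$ consists of $v$ together with the $\deg_G(v)$ pairwise-disjoint cliques $K_u = N_G(u)\setminus\{v\}$, one for each $G$-neighbour $u$ of $v$, and these cliques meet only at $v$. One would like to ``lift'' a winning cop strategy from $G_d$ (a cop on $G_d$ becomes a cop on $H_d$ moving at speed two) while keeping a bounded reserve of cops to neutralise the one clique-bundle currently threatening the robber; the difficulties are that speed-two motion can overshoot, and that the territory-expansion method of Aigner--Fromme (generalised in the proof of Theorem~\ref{Thm: bipartite planar}) relies on planarity and does not apply to the high-genus graphs $H_d$. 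A softer alternative is to take $G$ bipartite of girth at least $10$, so that $G^2 - E(G)$ splits into the two distance-$2$ graphs on the colour classes; when $G$ is the incidence graph of a generalised polygon these halved graphs are highly structured, and one might hope to bound their cop number by algebraic or geometric means. Until one of these routes is pushed through, Conjecture~\ref{Conj: Weak} remains open.
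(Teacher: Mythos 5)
This statement is a \emph{conjecture} in the paper, not a theorem: the authors explicitly leave it open, and your write-up does not close it either, so what you have is a reduction rather than a proof. The lower-bound half of your outline is sound and is precisely the paper's own route: take $G_d$ of minimum degree $d$ and girth at least $9$ (Erd\H{o}s--Sachs), note $\gamma(G_d^2-E(G_d))>2d$ via the Moore bound, and apply Lemma~\ref{Lemma: Square Lower Bound} to get $\attCop(H_d)\geq 2d$; this is exactly how Theorem~\ref{Thm: cc = 2c =6} is obtained for $d=3$. The genuine gap is the step you yourself flag: an upper bound of the form $\cop(H_d)\leq 2d-k$ (let alone $\cop(G^2-E(G))\leq\delta(G)$) for arbitrarily large $d$. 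No argument is given for it, and the paper states in Sections~4 and~5 that no method is known to bound $\cop(G^2-E)$ from above without computer assistance, and that the computer check used for $d=3$ (order $58$) does not scale to $d\geq 4$. So the proposal reproduces the paper's own open reduction and stalls at the same obstacle; the conjectured $\cop(H_d)\leq d$ is supported only by the $d=3$ data, and the sketched strategies (lifting a speed-two strategy from $G_d$, or passing to halved graphs of incidence graphs of generalised polygons) are speculative, with the Aigner--Fromme-style territory argument unavailable since $H_d$ is far from planar.

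Two smaller points to tidy if you pursue this: for the conjecture as stated you need, for each $k$, only \emph{one} suitable $d$ with $\cop(H_d)\leq 2d-k$, but any such bound must grow strictly slower than $2d$, so the problem is genuinely asymptotic (your ``$d\geq d_0(k)$'' handles this correctly); and you should insist that $G_d$ be non-bipartite (e.g.\ of girth exactly $9$), since for bipartite $G$ the graph $G^2-E(G)$ is disconnected, which clashes with the paper's standing connectivity convention and with applying the domination and cop-number bounds to a single graph.
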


We note that there exists an even stronger conjecture we can make regarding this difference. That is, that for all $k$, the upper bound $\attCop(G) \leq 2\cop(G) = 2k$ from \cite{bonato2014robber} is tight. We can state this more formally as follows.

\begin{conjecture}\label{Conj: Strong}
    For all integers $k \geq 4$ there exists a graph $H$ such that $\cop(H) = k$ and 
    $$\attCop(H) = 2\cop(H).$$
\end{conjecture}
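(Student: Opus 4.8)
The plan is to reduce Conjecture~\ref{Conj: Strong} to a single cop-number upper bound, following the template of the proof of Theorem~\ref{Thm: cc = 2c =6}. Fix $k \ge 4$ and suppose we can produce a $k$-regular graph $G$ of girth at least $9$, on $n$ vertices with $n \ge 2k(k^2-k+1)$, such that $\cop\!\big(G^2 - E(G)\big) \le k$. Put $H = G^2 - E(G)$. Because $G$ is $k$-regular of girth at least $9$, a short count (two distinct length-two paths out of a vertex sharing an endpoint would create a $4$-cycle) shows that $H$ is $(k^2-k)$-regular, whence
\[
\gamma(H) \ \ge\ \frac{|V(H)|}{\Delta(H)+1} \ =\ \frac{n}{k^2-k+1} \ \ge\ 2k .
\]
Thus $\min\{2k,\gamma(H)\} = 2k$, and Lemma~\ref{Lemma: Square Lower Bound} gives $\attCop(H) \ge 2k$. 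Combined with Observation~\ref{Ob: Trivial Cop bound} this forces $2k \le \attCop(H) \le 2\cop(H) \le 2k$, so $\cop(H) = k$ and $\attCop(H) = 2\cop(H)$, which is exactly what we want.

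Hence the conjecture follows as soon as, for each $k \ge 4$, one exhibits \emph{one} sufficiently large $k$-regular graph of girth at least $9$ whose square-minus-edges graph has cop number at most $k$ (and, to stay inside the paper's standing hypotheses, one should pick $G$ so that $H$ is connected, e.g.\ with girth exactly $9$). The existence of $k$-regular graphs of girth at least $9$ of arbitrarily large order is classical --- via the Erd\H{o}s--Sachs theorem, explicit Cayley- or Ramanujan-type constructions, or simply larger members of the family containing a $(k,9)$-cage --- so $n$ may be taken as large as convenient. The whole difficulty is the upper bound $\cop(G^2 - E(G)) \le k$.

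For that bound I would try to build an explicit $k$-cop strategy on $H$ that exploits its rigid local geometry: around every vertex $v$, the graph $H$ is a union of exactly $k$ cliques of size $k$ meeting pairwise only in $v$ (this is where girth $\ge 9$, rather than merely $\ge 5$, is used, just as in the proof of Lemma~\ref{Lemma: Square Lower Bound}). Two avenues seem worth pursuing: first, look for a retraction or graph homomorphism from $H$ onto a much smaller, cop-bounded graph, reducing the problem to that image; second, choose $G$ with extra symmetry (for instance the incidence graph of a suitable generalised polygon, or another vertex-transitive geometry of girth at least $9$) so that $H$ becomes distance-regular and a uniform $k$-cop strategy can be analysed by hand. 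A cleaner, if more ambitious, target would be to prove outright that $\cop\!\big(G^2 - E(G)\big) \le \delta(G)$ for every $\delta$-regular graph $G$ of girth at least $9$; this would dispatch the conjecture in one step.

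The main obstacle is precisely this upper bound, and it is a real one. As noted just before Theorem~\ref{Thm: cc = 2c =6}, there is currently no known way to bound $\cop(G^2 - E(G))$ from above without computer assistance, and generic estimates such as the $O(\sqrt{|V(H)|})$ Meyniel-type bound are far too weak at the required scale. I therefore expect a successful proof to hinge on a genuinely new structural tool for cop strategies on distance-power graphs, or on a fortunate explicit infinite family of geometries, rather than on any routine extension of the $k=3$ argument in Theorem~\ref{Thm: cc = 2c =6}.
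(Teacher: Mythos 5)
The statement you are addressing is stated in the paper as a conjecture and is left open there; the paper itself only establishes the analogous result for $k=3$ (Theorem~\ref{Thm: cc = 2c =6}), and explicitly notes that the method does not extend to $k\ge 4$. Your reduction is sound and is exactly the paper's own template: taking $H=G^2-E(G)$ for a $k$-regular $G$ of girth at least $9$, the regularity count giving $\Delta(H)=k^2-k$, the bound $\gamma(H)\ge n/(k^2-k+1)\ge 2k$, Lemma~\ref{Lemma: Square Lower Bound}, and Observation~\ref{Ob: Trivial Cop bound} do force $\cop(H)=k$ and $\attCop(H)=2k$ \emph{provided} $\cop(G^2-E(G))\le k$. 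But that proviso is the entire content of the conjecture, and your proposal does not prove it: the two ``avenues'' (a retraction onto a small cop-bounded image; extra symmetry making $H$ distance-regular) are not carried out, and no candidate family is analysed. So this is a conditional restatement of the problem, not a proof; the genuine gap is the upper bound on $\cop(G^2-E(G))$, which the paper identifies as the obstruction and which you also concede is the ``whole difficulty.''

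Moreover, your ``cleaner, more ambitious target'' --- that $\cop\bigl(G^2-E(G)\bigr)\le \delta(G)$ for \emph{every} $\delta$-regular graph $G$ of girth at least $9$ --- is already refuted by the paper's own data: $G_{18}$ is a $3$-regular graph of girth $9$, yet the paper verified $\cop(H_{18})>3$ for $H_{18}=G_{18}^2-E(G_{18})$. So any successful argument must single out particular graphs $G$ (as the paper does for $17$ of the $18$ cages, by computer), not prove a uniform bound over all regular girth-$9$ graphs; and for $k\ge 4$ neither a structural argument nor a feasible computation is currently available. Until you supply a proof of $\cop(G^2-E(G))\le k$ for at least one suitable $G$ for each $k\ge 4$, the conjecture remains open.
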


It is unclear if Conjecture~\ref{Conj: Strong} is true and gaining intuition on the matter has proven challenging. Perhaps the best evidence of how challenging the problems is, is that constructing graphs $H$ with $\attCop(H) - \cop(H) \geq 2$ has proven nontrivial. To the authors knowledge, only $18$ such graphs are known, $17$ of which first appear in this paper and one of which is given in \cite{bonato2014robber} (the line graph of the Peterson graph). Moreover all these graphs are obtained by a combination of a lower bound on attacking cop number using girth and minimum degree, see Lemma~8 from \cite{bonato2014robber} and Lemma~\ref{Lemma: Square Lower Bound}, and a direct computation of the cop number of a candidate graph. Such approaches will not generalise to large $k$ given computing the cop number of a graph is computationally hard for large $k$. As a result, new tools, such as upper bounds on $\cop(G^2-E)$, are required to make further progress on this problem. This seems to be an exciting direction of study, as for graphs $G$ of large girth $g$, the graph $G^2-E$ seems to exhibit many of the same properties as a graph of girth $g/2$ despite containing large cliques. This relationship seems of natural interest given how often considering graphs of girth at least $5$ is used to lower bound the cop number of a given class of graphs. 

It may also be worthwhile to see how far the similarity between $G^2-E$ and a graph of girth $g/2$ goes for its own sake. For example, does a result like the lower bound $\cop(G) \geq \frac{1}{g}(\delta-1)^{\lfloor \frac{g-1}{4}\rfloor}$ where $G$ has girth $g$ and minimum degree $\delta$ from \cite{bradshaw2023cop} hold for graphs $G^2-E$? Along these lines observe that if Conjecture~\ref{Conj: Strong} can be proven in the same way as Theorem~\ref{Thm: cc = 2c =6}, then this implies $\cop(G^2-E)$ would be much less than $\cop(G)$ for some graphs $G$. A result which seems nontrivial in its own right.

Next, we conjecture that the line graph of the Peterson graph, and the constructions we give in Theorem~\ref{Thm: cc = 2c =6} are minimal. 

\begin{conjecture}\label{Conj: Minimal Diff}
    Let $H$ is a graph with $\attCop(H)-\cop(H) \geq k$. If $k = 2$, then $H$ is order $n\geq 15$. If $k = 3$, then $H$ is order $n \geq 58$.
\end{conjecture}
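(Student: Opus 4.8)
The plan is to treat the two claims ($k=2$ and $k=3$) separately, as each is an extremal/minimality statement and must be established by a finite (though large) case analysis combined with structural reductions to keep that analysis feasible. For $k=3$, the lower bound $n \geq 58$ follows already from the machinery developed in this paper: since $\attCop(H) \leq 2\cop(H)$ (Observation~\ref{Ob: Trivial Cop bound}), the hypothesis $\attCop(H) - \cop(H) \geq 3$ forces $\cop(H) \geq 4$, hence $\attCop(H) \geq 7$; one would then want to argue that no graph on fewer than $58$ vertices has attacking cop number $7$ or more while simultaneously having the cop number gap exactly drop by $3$. The cleanest route is to show that every graph with $\attCop(H)\geq 7$ and $n < 58$ in fact has $\cop(H) \geq 5$ (so the gap is at most $2$), or is outright impossible; this would need a lower-bound tool of the flavour of Theorem~\ref{Lemma: Girth >= 5 Lower Bound} or Lemma~\ref{Lemma: Square Lower Bound} run in reverse, i.e., an \emph{upper} bound on $\attCop$ in terms of order that is tight enough near $n=58$. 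I expect this is where computer assistance (as in Theorem~\ref{Thm: cc = 2c =6}) is essential: one enumerates, or at least bounds, candidate graphs of small order and verifies the claim directly.

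For $k=2$, the target is $n \geq 15$, with the line graph of the Petersen graph (order $15$) as the extremal example. Here one wants to prove that no graph on at most $14$ vertices has $\attCop(H) - \cop(H) \geq 2$. The first reduction I would make is to note that $\attCop(H) - \cop(H) \geq 2$ together with $\attCop(H) \leq 2\cop(H)$ forces $\cop(H) \geq 2$, hence $\attCop(H) \geq 4$. Then I would invoke the known bound $\attCop(G) \leq \gamma(G)$ (Observation~\ref{Ob: Trivial upper bound}) to conclude $\gamma(H) \geq 4$, and the bound $\attCop(G) \leq \cop(G) + 2\Delta(G) - 2$ from \cite{bonato2014robber}, which combined with $\attCop(H) \geq \cop(H)+2$ gives $\Delta(H) \geq 2$ (weak) — so one really needs the sharper structural constraints: graphs with $\gamma(H)\geq 4$ and few vertices are highly constrained, and among those, having cop number only $2$ (or even $3$) while the attacking version needs $4$ is very restrictive. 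The concrete plan is: (i) show $\cop(H) = 2$ is the only relevant case for $n \leq 14$ (since $\cop(H)=3$ already needs more vertices — Aigner--Fromme-type bounds or direct enumeration), so we need $\attCop(H)=4=2\cop(H)$; (ii) characterize which graphs on $\leq 14$ vertices have $\cop(H)=2$, using known small-order data; (iii) for each, verify $\attCop(H)\leq 3$.

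The main obstacle, in both cases, is that the argument is fundamentally a finite computation over a very large search space, and the paper's own verification code (\cite{Afanassiev2017,Clow2024Git}) scales poorly: checking $\cop(G)\leq k$ is $O(n^{3k+3})$ with the implementation used, and $\attCop$ has no comparably mature algorithm in the literature. So the genuinely hard part is not any single lemma but rather (a) reducing the search space enough — via domination-number lower bounds, girth/minimum-degree arguments à la Lemma~\ref{Lemma: Subdivision Lower Bound}, and the reductions of Lemma~\ref{Lemma: retract} — that exhaustive or near-exhaustive computer search on all graphs up to order $14$ (resp.\ arguing no relevant graph exists below order $58$) becomes tractable, and (b) implementing a correct and reasonably fast solver for the attacking cop number, which does not currently appear to exist in published form. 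For this reason I would present the conjecture as stated and not attempt a proof here; a complete resolution would likely be a separate computational paper, and the honest status of the claim is that it is supported by the absence of any smaller example despite the authors' search, exactly as the surrounding discussion indicates.
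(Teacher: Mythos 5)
This statement is a \emph{conjecture} in the paper: the authors give no proof of it, only the supporting evidence that the line graph of the Petersen graph (order $15$) and the graphs $H_1,\dots,H_{17}$ (order $58$) are the smallest known examples with gap $2$ and $3$ respectively. Your decision to present it as an open claim rather than attempt a proof is therefore consistent with the paper, and your assessment of what a proof would require (strong reductions plus a large, currently infeasible computation, including a solver for $\attCop$) is reasonable.

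However, the reduction you sketch for the $k=3$ case contains a concrete error. From $\attCop(H)-\cop(H)\geq 3$ and $\attCop(H)\leq 2\cop(H)$ (Observation~\ref{Ob: Trivial Cop bound}) one gets $\cop(H)+3\leq 2\cop(H)$, i.e.\ $\cop(H)\geq 3$ and hence $\attCop(H)\geq 6$ --- not $\cop(H)\geq 4$ and $\attCop(H)\geq 7$ as you claim. Indeed the paper's own extremal examples have exactly $\cop(H_i)=3$ and $\attCop(H_i)=6$ (Theorem~\ref{Thm: cc = 2c =6}), so any search-space reduction predicated on $\cop(H)\geq 4$ would exclude precisely the graphs that witness the conjectured bound $n\geq 58$, and the proposed strategy of showing ``every graph with $\attCop(H)\geq 7$ and $n<58$ has $\cop(H)\geq 5$'' addresses the wrong class of graphs. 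Your $k=2$ reduction ($\cop(H)\geq 2$, $\attCop(H)\geq 4$, hence $\gamma(H)\geq 4$) is correct and matches the Petersen line graph example, but as you note it still leaves the essential finite verification undone.
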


Finally, we ask a more specific question which we were unable to answer. That is, what is the cop number and attacking cop number of $H_{18}$? In particular, what is their difference?

\begin{question}
    What is the value of $\attCop(H_{18}) - \cop(H_{18})$?
\end{question}

\section*{Acknowledgements}
M.E. Messinger acknowledges research support from NSERC (grant application 2018-04059). Melissa A. Huggan acknowledges research support from NSERC (grant application 2023-03395).

\bibliographystyle{plain}
\bibliography{Cops&AttRobbers}

\end{document}